\newtheorem{theorem}{Theorem}[section]
\newtheorem{lemma}[theorem]{Lemma}
\newtheorem{defn}[theorem]{Definition}
\newtheorem{cor}[theorem]{Corollary}
\theoremstyle{definition} 
\newtheorem{notation}[theorem]{Notation}
\newtheorem{rmk}[theorem]{Remark}
\newtheorem{example}[theorem]{Example}
\newtheorem{quest}[theorem]{Question}
\numberwithin{equation}{section}
\title{Cutoff for the Transposition Walk on Permutations with One-Sided Restrictions}
\author{Olena Blumberg}
\date{}
\begin{document}
\maketitle

\begin{abstract}
This paper explores the mixing time of the random transposition walk on permutations with one-sided interval restrictions. In particular, we're interested in the notion of cutoff, a phenomenon which occurs when mixing occurs in a window of order smaller than the mixing time. One of the main tools of the paper is the diagonalization obtained by Hanlon \cite{HanlonPaper}; the use of the spectral information is inspired by the famous paper of Diaconis and Shahshahani on the mixing time of the random transposition walk on $S_n$ \cite{DiaconisandShah}. The diagonalization allows us to prove chi-squared cutoff for a broad class of one-sided restriction matrices. Furthermore, under an extra condition, the walk also undergoes total variation cutoff. Finally, a large collection of examples which undergo chi-squared cutoff but in which total variation mixing occurs substantially earlier and without cutoff is produced. These results resolve a conjecture of Diaconis and Hanlon from Section 5 of Hanlon's paper.  
\end{abstract}

\section{Introduction}

This paper studies the mixing time of the random transposition walk on permutations with one-sided interval restrictions. Intuitively, the mixing time of a random walk is the number of steps it takes a random walk to get close to random. Permutations with restricted positions are elements $\sigma$ of the symmetric group $S_n$ such that for each $i$, $\sigma(i)$ is only allowed to be in a particular subset of $\{1, 2, \dots, n\}$. These are specified by a $\{0,1\}$-matrix $M$, where $\sigma(i)$ is allowed to be equal to $j$ precisely if $M(i, j) = 1$. Such a matrix $M$ is called a restriction matrix. 

Let $S_M$ be the set of permutations corresponding to $M$. More concisely, 
\begin{equation}\label{permswithrestrictions}
S_M = \{\sigma \left| \right. M(i, \sigma(i)) = 1 \text{ for all } i\}
\end{equation}
Define $S(i) = \{j \left| \right. M(i, j) = 1\}$. Under this definition, $S(i)$ is the set of allowable values for $\sigma(i)$. If $S(i)$ is an interval for each $i$, then $M$ is called an interval restriction matrix. If $M$ is $n\times n$, and $S(i) = [a_i, n]$ for each $i$ for some $a_i$, then $M$ is a one-sided interval restriction matrix. For such matrices $M$, the permutations in $S_M$ correspond to rook placements on a Ferrers board, an object obtained by removing a Ferrers diagram from one of the corners of an $n \times n$ chessboard. These objects have an elegant combinatorial structure, originally studied by Foata, and Sch{\"u}tzenberger \cite{FoataPaper}, and then later by Goldman, Joichi and White \cite{GJW}. In particular, it is possible to perfectly sample elements of $S_M$. 

The random transposition walk on permutations with one-sided interval restrictions proceeds as follows. First, pick a number $i$ from $\{1, 2, \dots, n\}$ with your left hand, and an independent number $j$ from $\{1, 2, \dots, n\}$ with your right hand. Note that it is is possible to choose the same number. If the cards in positions $i$ and $j$ can be transposed while remaining in the set $S_M$, do so. If this is not possible, go back and pick a different pair $i'$ and $j'$ using the same procedure. Continue selecting pairs until arriving at one that can be exchanged without leaving $S_M$. Transposing this pair will then be the next step in the chain. 

This random walk was diagonalized by Hanlon \cite{HanlonPaper}, who derived all the eigenvalues and eigenvectors in terms of combinatorial objects described in Definition \ref{defbpartition}. This amazing result is an extension of the diagonalization of the random transposition walk on $S_n$, which uses the representation theory of the symmetric group (see James and Kerber \cite{JamesandKerber} for a treatment of the standard theory). The diagonalization of the walk on $S_M$ was possible despite the absence of a group structure. In this paper, the spectral information is used to obtain very sharp results on the mixing time of the chain under further restriction to `two-step' one-sided restriction matrices, where each $S(i)$ is either equal to $[1,n]$ or $[a, n]$ for some fixed $a$. This subclass of one-sided restriction matrices was selected because they generate a vertex transitive random walk and as such require only eigenvalue and not eigenvector information to examine the mixing time; this result is stated precisely in Corollary \ref{alleigsvertextransitive}.

Any discussion of mixing time requires a notion of distance: a random walk is close to random when the distance between the distribution of the random walk at time $t$ and the stationary distribution $\pi$ is small. A common distance is the total variation distance $\|\mu - \nu \|_{\mathrm{TV}}$, which is defined to be the $L^1$ distance between $\mu$ and $\nu$. This distance has the advantage of measuring an intuitive quantity: it is the maximum error that can be made by estimating the probability $\nu(A)$ with $\mu(A)$. However, this distance is not amenable to spectral analysis. As such, a second notion of distance is necessary. The appropriate distance is the chi-squared distance, which is defined to be the $L^2$ distance between $\frac{\mu}{\pi}$ and $1$ (see Equation \eqref{chisquaredefn}). It is easy to show that the chi-squared distance is an upper bound on the total variation distance. In many examples, the chi-squared and total variation mixing times match, making the chi-squared distance an excellent tool when spectral information is available. 

The advantage of having complete spectral information is that it can be used to obtain very precise bounds on chi-squared distance at any time $t$. In particular, it can be used to show that the random walk experiences cutoff: a phenomenon which occurs when the walk transitions from being barely mixed to being thoroughly mixed in a window much smaller than the mixing time (see Equation \eqref{cutoffdefn} for the precise definition). The notion of cutoff can be applied to any distance; accordingly, this paper will examine both total variation and chi-squared cutoff. The present approach is inspired by the results of Diaconis and Shahshahani, who used the eigenvalues of the random transposition walk on $S_n$ to show total variation cutoff of the walk at time $\frac{1}{2} n \log n$ with a window of size $n$ \cite{DiaconisandShah}. This paper makes use of the techniques of this proof, especially the more streamlined version presented by Diaconis in the book ``Group representations in probability and statistics'' \cite{DiaconisBook}. However, as will be seen below, the calculations in this paper are much more involved. 

The main results of this paper are as follows: for two-step restriction matrices, there is a broad class of matrices for which the random transposition walk experiences chi-squared cutoff. This result is stated precisely in Theorem \ref{chitheorem}. However, the total variation mixing time is more complicated: an extra condition is needed to ensure total variation cutoff (see Theorem \ref{TVlowerbound}), and these cases require an additional combinatorial argument. Furthermore, in Theorem \ref{TVfastmixingexample}, a class of examples is given for which total variation mixing occurs considerably earlier, and for which cutoff does not occur. In particular, these results resolve the conjecture stated as Theorem 5.6 in Hanlon's paper. Using the definitions above, the conjecture states that if 
\begin{equation*}
S(i) = \begin{cases} [1, n] & i \leq n^\alpha\\
                     [2, n] &    n^{\alpha} < i \leq n
        \end{cases} 
\end{equation*}
then the random walk has cutoff. (There's a slight typo in the paper: it states that cutoff occurs around time $\frac{\alpha}{2} n^{2-\alpha} \log n$, whereas it should actually be $\frac{\alpha}{4} n^{2-\alpha} \log n$.) This turns out to be partially correct: while the statement is true for chi-squared mixing time, it does not hold total variation mixing time. Indeed, the total variation mixing time occurs substantially earlier and without cutoff.

Therefore, the results in this paper are instructive in a number of ways. In a subset of cases, they show the utility of spectral information in showing cutoff with respect to both chi-squared and total variation distance. However, they also show that there is a broad class of random walks for which the spectral information only contains information about chi-squared distance, since total variation mixing occurs substantially earlier. Furthermore, in the latter case, it is demonstrated that the behavior of the chi-squared distance can be entirely different from the behavior of total variation distance, since in the examples provided, chi-squared distance undergoes cutoff while the total variation distance does not.

This paper also raises a number of fascinating questions about the random transposition walk on permutations with one-sided interval restriction. While this paper obtains strong results for a broad class of two-step restriction matrices, I am confident that some of the hypotheses in Theorems \ref{chitheorem} and \ref{TVlowerbound} are extraneous. Furthermore, while it was convenient to restrict attention to the two-step case in order to simplify calculations by using only eigenvalue information, Hanlon's paper also derives the eigenvectors of the random walk. Therefore, while the calculations would likely be more challenging, it should be possible to apply the ideas below to a broader class of one-sided interval restriction matrices. It would also be very interesting to find conditions on one-sided restriction matrices which would guarantee that the chi-squared and total variation mixing time are of the same order. For a full discussion of the many questions which arise naturally from this paper, see Section \ref{sectionquestionstoponder}. 

\section{Definitions and Setup}

There are a number of definitions needed before the results can be fully stated. Let $\mu$ and $\nu$ are two probability distributions on a finite state space $\Omega$. Then, the total variation distance between $\mu$ and $\nu$ is defined as 
\begin{equation*}
\left\| \mu - \nu \right\|_{TV} = \frac{1}{2}\sum_{x\in \Omega} | \mu(x) - \nu(x)|
\end{equation*}
For a Markov chain with transition probabilities $P(x, y)$ and stationary distribution $\pi$, define $d(t) =  \left\| P^t(x, \cdot) - \pi\right\|_{TV}$ to be the distance from stationarity at time $t$. Then, the mixing time is defined as 
\begin{equation*}
\tau_{\mathrm{mix}}(\epsilon) = \min\left\{t \left| \right. d(t) \leq \epsilon\right\}
\end{equation*}
Conventionally, $\tau_{\mathrm{mix}}$ is chosen to be $\tau_{\mathrm{mix}}(1/4)$. 

As noted above, this paper also uses a second notion of distance. Define the chi-squared distance between distributions $\mu$ and $\pi$ with respect to $\pi$ to be 
\begin{equation}\label{chisquaredefn} 
\left\| \mu - \pi \right\|_{2, \pi}  = \sqrt{\sum_{x} \left( \frac{\mu(x)}{\pi(x)} - 1 \right)^2 \pi(x) }
\end{equation} 
The chi-squared mixing time can be defined in a way which is entirely analogous to the definition of the total variation mixing time. 

As should be readily apparent, the chi-squared distance is the $L^2(\pi)$ distance between $\frac{\mu}{\pi}$ and $1$. Furthermore, it is easy to see that $\left\| \mu - \pi \right\|_{TV}$ is precisely half the $L^1(\pi)$ distance between $\frac{\mu}{\pi}$ and $1$. This means that chi-squared distance is an upper bound for twice the total variation distance. Furthermore, chi-squared distance turns out to precisely computable if all the eigenvalues and eigenvectors of the matrix $P$ are known. The following two results are standard, and follow from Lemma 12.16 in \cite{YuvalBook}. 

\begin{theorem}\label{alleigs}
Let $P$ be the transition matrix of an irreducible reversible Markov chain on $\Omega$ with eigenvalues $1 = \beta_0 > \beta_1 \geq \cdots \geq \beta_{|\Omega|-1} \geq -1$, and with a corresponding basis of orthonormal eigenvectors $\{v_j\}$. Then,
\begin{equation*}
\left\| P^t(x, \cdot) - \pi \right\|_{TV} \leq \frac{1}{2} \left\| P^t(x, \cdot) - \pi \right\|_{2, \pi} = \frac{1}{2}\sqrt{\sum_{i =1}^{|\Omega|-1} v_i(x)^2 \beta_i^{2t}}
\end{equation*} 
\end{theorem}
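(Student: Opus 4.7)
The plan is to treat the two pieces of the statement separately: first establish the inequality between total variation and chi-squared distance, then derive the spectral formula for the chi-squared distance.

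For the inequality, I would rewrite total variation as $\| \mu - \pi\|_{TV} = \frac{1}{2} \sum_x \bigl|\frac{\mu(x)}{\pi(x)} - 1\bigr| \pi(x)$ and apply Cauchy--Schwarz in $L^2(\pi)$ with the constant function $1$: the sum equals $\langle |f - 1|, 1\rangle_\pi$ where $f = \mu/\pi$, which is bounded by $\|f - 1\|_{L^2(\pi)} \cdot \|1\|_{L^2(\pi)} = \| \mu - \pi\|_{2,\pi}$ since $\pi$ is a probability measure. This gives the factor of $\frac{1}{2}$ correctly.

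For the spectral identity, the key point is that reversibility makes $P$ self-adjoint on $L^2(\pi)$, so the eigenvectors $\{v_j\}$ form an orthonormal basis of $L^2(\pi)$, with $v_0 \equiv 1$ corresponding to $\beta_0 = 1$. I would expand the density $f_x(y) := P^t(x,y)/\pi(y)$ in this basis: the inner product
\begin{equation*}
\langle f_x, v_j\rangle_\pi = \sum_y P^t(x,y) v_j(y) = (P^t v_j)(x) = \beta_j^t v_j(x),
\end{equation*}
where the first equality absorbs the $\pi(y)$ factor and the last uses the eigenvalue equation. The $j = 0$ term is identically $1$, so it exactly cancels the $-1$ when computing $f_x - 1$, leaving
\begin{equation*}
f_x(y) - 1 = \sum_{j=1}^{|\Omega|-1} \beta_j^t v_j(x) v_j(y).
\end{equation*}

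Finally, I would apply Parseval's identity in $L^2(\pi)$ to $f_x - 1$, which yields $\|f_x - 1\|_{L^2(\pi)}^2 = \sum_{j \geq 1} \beta_j^{2t} v_j(x)^2$. Taking square roots gives the claimed formula. None of the steps is really an obstacle; the only subtle point worth flagging is the convention that $P$ acts on functions from the left, which is what makes the inner product $\langle f_x, v_j\rangle_\pi$ collapse to $\beta_j^t v_j(x)$ without any extra $\pi$-weights, so I would state this convention explicitly at the start.
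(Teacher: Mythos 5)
The paper does not actually prove this theorem; it simply cites it as standard, following from Lemma 12.16 in \cite{YuvalBook}, and your argument is precisely the standard proof one finds in that reference. Your Cauchy--Schwarz step for the TV-to-chi-squared inequality and your expansion of $P^t(x,\cdot)/\pi$ in the orthonormal eigenbasis of $L^2(\pi)$ followed by Parseval are both correct, and the cancellation of the $j=0$ term is handled properly since irreducibility forces $v_0 \equiv 1$ up to sign.
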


The above theorem requires all the information about the eigenvalues and eigenvectors to compute the chi-squared distance. However, given a sufficient degree of symmetry, eigenvalues suffice. Say that a Markov chain on a state space $\Omega$ is {\it vertex transitive} if for every $x, y \in \Omega$, there exists a bijection $f: \Omega \rightarrow \Omega$ such that $f(x) = y$, and $f$ preserves the random walk. In this case, the following easy corollary can be proved. 

\begin{cor}\label{alleigsvertextransitive}
If the assumptions from Theorem \ref{alleigs} hold, and the Markov chain is also vertex transitive, then
\begin{equation*}
\left\| P^t(x, \cdot) - \pi \right\|_{TV} \leq \frac{1}{2} \left\| P^t(x, \cdot) - \pi \right\|_{2, \pi} = \frac{1}{2}\sqrt{\sum_{i =1}^{|\Omega|-1} \beta_i^{2t}}
\end{equation*}
\end{cor}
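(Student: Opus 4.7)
The plan is to use vertex transitivity to show that the chi-squared distance from stationarity does not depend on the starting state $x$, and then extract the eigenvalue-only formula by averaging over $x$ against $\pi$.

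First I would verify the $x$-independence. Given a bijection $f : \Omega \to \Omega$ preserving the walk with $f(x) = y$, iterating $P(f(a), f(b)) = P(a,b)$ gives $P^t(x, z) = P^t(y, f(z))$ for every $z$. Moreover, $\pi \circ f^{-1}$ is also a stationary distribution, so the irreducibility hypothesis of Theorem \ref{alleigs} forces $\pi(f(z)) = \pi(z)$. Substituting $w = f(z)$ in the definition \eqref{chisquaredefn} and using these two facts shows
\[
\left\|P^t(x, \cdot) - \pi\right\|_{2, \pi} = \left\|P^t(y, \cdot) - \pi\right\|_{2, \pi}.
\]

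Next I would invoke Theorem \ref{alleigs} to write this common value as $\sum_{i \geq 1} v_i(x)^2 \beta_i^{2t}$. Since this quantity does not depend on $x$, I would average over $x \in \Omega$ against $\pi$ and swap the order of summation to obtain
\[
\left\|P^t(x, \cdot) - \pi\right\|_{2,\pi}^2 = \sum_{i \geq 1} \beta_i^{2t} \sum_{x \in \Omega} \pi(x) v_i(x)^2 = \sum_{i \geq 1} \beta_i^{2t},
\]
where the last equality uses $\|v_i\|_{L^2(\pi)}^2 = 1$ by orthonormality of the eigenbasis. Taking square roots and combining with the total variation upper bound already supplied by Theorem \ref{alleigs} gives the stated identity.

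The only nontrivial step is the first one — confirming that \emph{$f$ preserves the walk} really forces both $P^t$ and $\pi$ to transform equivariantly under $f$. Everything after that is a routine application of Theorem \ref{alleigs} together with the orthonormality of the eigenbasis in $L^2(\pi)$.
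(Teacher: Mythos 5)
Your argument is correct and is essentially the standard way to derive this corollary; the paper itself does not give a proof (it is stated as an "easy corollary" to Theorem~\ref{alleigs}), so there is no competing route to compare. Each step holds up: $P(f(a),f(b))=P(a,b)$ does iterate to $P^t(f(a),f(b))=P^t(a,b)$, pushing forward $\pi$ by $f$ does give a stationary measure which by irreducibility must equal $\pi$, the substitution $w=f(z)$ in definition~\eqref{chisquaredefn} then yields the $x$-independence of $\|P^t(x,\cdot)-\pi\|_{2,\pi}$, and averaging the identity from Theorem~\ref{alleigs} against $\pi$ while using $\sum_x \pi(x)v_i(x)^2=1$ collapses $v_i(x)^2$ to $1$. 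The one thing worth noting is that vertex transitivity plus irreducibility already forces $\pi$ to be uniform (apply $\pi\circ f^{-1}=\pi$ for an $f$ taking any $x$ to any $y$), so in this setting the weighted average over $\pi$ and the unweighted average over $\Omega$ coincide; but your version is cleaner since it never needs that observation.
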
 

Finally, it is necessary to define the cutoff phenomenon rigorously. This definition can apply to any notion of mixing time; in this paper, it will be applied both to total variation and chi-squared mixing times. Consider a family of Markov chains $(X^n_t)_{t\geq 0}$ with mixing times $\tau_{\mathrm{mix}}^{n}(\epsilon)$. Then, cutoff is present if for all $\epsilon > 0$,
\begin{equation}\label{cutoffdefn} 
\lim_{n\rightarrow \infty} \frac{\tau_{\mathrm{mix}}^{n}(\epsilon)}{\tau_{\mathrm{mix}}^{n}(1 - \epsilon)} = 1 
\end{equation}
This is clearly equivalent to saying that for sufficiently large $n$, mixing occurs in an arbitrarily small window relative to the size of $\tau^n_{\mathrm{mix}}$.

Now proceed to definitions specific to one-sided interval restriction matrices. Let $M$ be such a matrix; then for each $i$ $S(i)$ must be equal to $[b_i, n]$ for some $b_i$. To simplify notation, make the following definition. 

\begin{defn}
Let $n$ be an integer, and let $\vec{b} = (b_1, b_2, \cdots, b_n)$ be a vector of elements of $\{1, 2, \dots, n\}$.  Define $M(\vec{b})$ to be the $n \times n$ interval restriction matrix satisfying $S(i) = [b_i, n]$ for all $i.$ 
\end{defn}
Here is an example: if $n=4$ and $\vec{b} = (1,1,2,3)$, then  
\begin{equation*}
M(\vec{b})=
\begin{bmatrix}
1 & 1 & 1 & 1 \\
1 & 1 & 1 & 1 \\
0 & 1 & 1 & 1 \\
0 & 0 & 1 & 1
\end{bmatrix}
\end{equation*}

\begin{rmk}
The notation here is different from Hanlon's -- he doesn't explicitly talk about restriction matrices and uses the notation $R_n(\vec{b})$ for the set we have called $S_{M(\vec{b})}$. 
\end{rmk}

\begin{lemma}\label{permutingrows}
Let $M$ be a restriction matrix, and let $N$ be $M$ with its rows permuted. Then there is an isomorphism between the random transposition walk on $N$ and the random transposition walk on $M$.
\end{lemma}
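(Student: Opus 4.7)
\smallskip

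\noindent\textbf{Proof proposal.} Let $\rho \in S_n$ be the permutation of rows relating $N$ to $M$, so $N(i,j) = M(\rho(i), j)$ for all $i,j$. The plan is to exhibit an explicit bijection $\phi : S_N \to S_M$ and then verify that $\phi$ conjugates the two transition kernels. The natural candidate is
\begin{equation*}
\phi(\tau) \;=\; \tau \circ \rho^{-1},
\end{equation*}
which ``relabels positions'' by $\rho$ while leaving the values fixed. Observe that $\tau \in S_N$ means $M(\rho(i), \tau(i)) = 1$ for all $i$; writing $k = \rho(i)$ and $\sigma = \phi(\tau)$, this is exactly $M(k, \sigma(k)) = 1$ for all $k$, so $\phi$ maps $S_N$ into $S_M$. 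Since $\tau \mapsto \tau \circ \rho^{-1}$ is clearly invertible (the inverse sends $\sigma \in S_M$ to $\sigma \circ \rho \in S_N$ by the symmetric argument), $\phi$ is a bijection.

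Next I would show that $\phi$ intertwines the two chains. A single step of the chain on $S_N$ samples a pair $(i_1,i_2)$ uniformly in $\{1,\dots,n\}^2$ and accepts the transposition of positions $i_1, i_2$ in $\tau$ iff both $N(i_1, \tau(i_2)) = 1$ and $N(i_2, \tau(i_1)) = 1$; otherwise it resamples. Under $\phi$, writing $\sigma = \tau\circ\rho^{-1}$ so that $\tau(i) = \sigma(\rho(i))$, this acceptance condition becomes
\begin{equation*}
M(\rho(i_1), \sigma(\rho(i_2))) = 1 \quad\text{and}\quad M(\rho(i_2), \sigma(\rho(i_1))) = 1,
\end{equation*}
which is exactly the condition that the $M$-walk accepts the transposition of positions $\rho(i_1), \rho(i_2)$ in $\sigma$. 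Moreover, when the swap is accepted, the resulting $\tau'$ satisfies $\tau'(i_1) = \sigma(\rho(i_2))$ and $\tau'(i_2) = \sigma(\rho(i_1))$, hence $\phi(\tau') = \tau'\circ\rho^{-1}$ is exactly $\sigma$ with the values at positions $\rho(i_1)$ and $\rho(i_2)$ swapped. Because $\rho$ is a bijection on $\{1,\dots,n\}$, pushing forward the uniform distribution on $(i_1,i_2)$ by $(\rho,\rho)$ gives the uniform distribution on $(j_1,j_2)$; thus the joint law over (pair, accept/reject, resample) is identical after applying $\phi$.

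Combining, one obtains $P_N(\tau_1, \tau_2) = P_M(\phi(\tau_1), \phi(\tau_2))$ for all $\tau_1, \tau_2 \in S_N$, which is the statement that $\phi$ is an isomorphism of the Markov chains. There is no genuine difficulty here; the only thing to be careful about is that the walk acts by transposing \emph{positions} rather than values, so the relabeling must act on the domain of $\sigma$ (whence the composition on the right by $\rho^{-1}$) and the sampled index pair gets translated by $\rho$ on both coordinates. Once that bookkeeping is set up, the verification is a direct unpacking of the definitions.
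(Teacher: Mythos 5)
Your proof is correct: the map $\phi(\tau) = \tau\circ\rho^{-1}$ is the natural conjugation by the row permutation, and your verification that it carries $S_N$ bijectively onto $S_M$ and intertwines the acceptance condition (since $(i_1,i_2)\circ\rho^{-1} = \rho^{-1}\circ(\rho(i_1),\rho(i_2))$ and $\rho$ pushes the uniform index-pair distribution forward to itself) is exactly the right bookkeeping. The paper itself does not supply a proof here, deferring instead to Lemma 3.1 of the author's thesis, so there is nothing in-text to compare against; your argument fills in that deferred detail in the expected way.
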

\begin{proof}[\bf Proof:] 
This result is straightforward (and is simply assumed in Hanlon \cite{HanlonPaper}.) For a full discussion, see Lemma 3.1 in my thesis \cite{OlenaThesis}. 
\end{proof}

Lemma \ref{permutingrows} shows that permuting the rows of $M$ doesn't change the underlying graph. Therefore, from now on assume that the vector $\vec{b}$ satisfies 
\begin{equation}\label{orderingbis}
b_1 \leq b_2 \leq \cdots \leq b_n
\end{equation}
With the above assumption, Proposition 2.1 (d) in Hanlon shows that:

\begin{lemma}\label{regulargraph}
If $b_i \leq i$ for all $i$ then the graph on $S_{M(\vec{b})}$ induced by the random transposition walk is regular, with degree 
\begin{equation}\label{Deltavalue}
\Delta = \sum_{i=1}^n (i - b_i)
\end{equation}
If $b_i > i$ for some $i$, the set $S_{M(\vec{b})}$ is empty. 
\end{lemma}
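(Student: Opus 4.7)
The plan is to handle the two assertions separately. For the emptiness claim, suppose $b_i > i$ for some $i$. By the ordering assumption \eqref{orderingbis}, every $k \geq i$ satisfies $S(k) = [b_k, n] \subseteq [b_i, n]$, so the $n-i+1$ distinct values $\sigma(i), \sigma(i+1), \ldots, \sigma(n)$ would all have to lie in $[b_i, n]$, a set of size $n - b_i + 1 \leq n - i$. This is impossible, so $S_{M(\vec{b})} = \emptyset$.

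Now assume $b_i \leq i$ for all $i$; in particular the identity permutation lies in $S_{M(\vec{b})}$, so the set is nonempty. Since two elements of $S_{M(\vec{b})}$ related by a single transposition differ in a unique pair of positions, the degree of $\sigma$ in the induced graph equals the number of pairs $\{i, j\}$ with $i < j$ for which swapping $\sigma(i)$ and $\sigma(j)$ remains in $S_{M(\vec{b})}$. This swap requires $\sigma(j) \geq b_i$ and $\sigma(i) \geq b_j$; the first holds automatically because $\sigma(j) \geq b_j \geq b_i$, so only the second condition is binding, giving
\[
\deg(\sigma) = \sum_{j=1}^n \#\{\, i < j : \sigma(i) \geq b_j \,\}.
\]

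The key observation is that for each fixed $j$, the count $\#\{i < j : \sigma(i) \geq b_j\}$ does not depend on $\sigma$. Indeed, for every $k \geq j$ we have $\sigma(k) \geq b_k \geq b_j$, so the $n - j + 1$ distinct values $\sigma(j), \sigma(j+1), \ldots, \sigma(n)$ all lie in $[b_j, n]$. Since $[b_j, n]$ has exactly $n - b_j + 1$ elements, precisely
\[
(n - b_j + 1) - (n - j + 1) = j - b_j
\]
of its elements must appear among $\sigma(1), \ldots, \sigma(j-1)$. Summing over $j$ gives $\Delta = \sum_{j=1}^n (j - b_j)$, as claimed.

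I do not anticipate any serious obstacle: the whole argument turns on the monotonicity of $\vec{b}$, which simultaneously trivializes one half of the swap-validity condition and forces all ``later'' entries of $\sigma$ to land automatically inside $[b_j, n]$, so the binding count decouples by $j$ and becomes independent of the permutation.
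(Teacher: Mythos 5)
Your proof is correct and, unlike the paper, self-contained: the paper establishes this lemma purely by citation to Proposition~2.1(d) of Hanlon's paper and gives no argument of its own. Both halves of your argument are sound. For emptiness, the pigeonhole count of $\sigma(i),\dots,\sigma(n)$ against $[b_i,n]$ is exactly right. For regularity, your observation that the swap-validity condition $\sigma(j)\geq b_i$ (for $i<j$) is automatic by the monotonicity of $\vec b$ is the crucial simplification, and the counting step --- that exactly $(n-b_j+1)-(n-j+1)=j-b_j$ elements of $[b_j,n]$ must be hit by indices below $j$, regardless of $\sigma$ --- correctly decouples the sum by $j$ and shows the degree is $\sum_j(j-b_j)=\Delta$ independently of $\sigma$. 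One tiny point you could make explicit is that distinct index pairs $\{i,j\}$ always yield distinct neighbors (since $\sigma(i)\neq\sigma(j)$, the swapped permutation differs from $\sigma$ in precisely positions $i$ and $j$), so counting valid pairs really does count edges; but this is immediate.
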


With the random transposition walk as defined in the introduction, make the following definition:

\begin{defn}
Given a vector $\vec{b} = (b_1, b_2, \dots, b_n)$, define $U(\vec{b})$ to be the adjacency matrix of $S_{M(\vec{b})}$ under the random transposition walk. 
\end{defn}

With $\Delta$ from Equation \ref{Deltavalue}, if $P$ is the transition matrix for the random walk, then for $\tau$ and $\sigma$ that differ by a transposition $(i,j)$,
\begin{equation*}
P(\sigma, \tau) = \frac{2}{n+2\Delta}
\end{equation*}
since $i$ can be chosen with the right hand and $j$ with the left hand, or vice versa. Furthermore, 
\begin{equation*}
P(\sigma, \sigma) = \frac{n}{n+2\Delta}
\end{equation*}
since the same $i$ can be chosen with both hands. This shows that
\begin{equation}\label{Ptransitionmatrix}
P = \frac{1}{n+ 2\Delta}(nI + 2 U)
\end{equation} 
where $I$ is the identity matrix. This demonstrates that finding the spectral information for $P$ reduces to diagonalizing $U$. Correspondingly, many of the theorems in this paper  will be stated in terms of $U(\vec{b})$. 

As noted earlier, attention here is restricted to a subclass of one-sided restriction matrices: 

\begin{defn}\label{boffg}
Let $f(n), g(n)$ be two functions from $\mathbb{Z}^+$ to $\mathbb{Z}^+$. Define $\vec{b}_n(f, g)$ to be the vector $(b_1, b_2, \dots, b_n)$ such that 
\begin{equation*}
b_i = \begin{cases}
1 & i \leq f(n) \\
g(n)+1 &  f(n) < i \leq n
\end{cases}
\end{equation*}
That is, $\vec{b}_n(f, g) = (1, 1, \dots, 1, g(n)+1, \dots, g(n)+1)$ where the number of $1$s in the beginning of the vector is $f(n)$. 
\end{defn} 

\begin{rmk}\label{fgreatherthang} From Lemma \ref{regulargraph}, $b_i \leq i$ for all $i$ is necessary for $S_{M(\vec{b})}$ not to be empty. This condition at $i = g(n)+1$ above forces
\begin{equation}\label{fgreaterthangeq}
f(n) \geq g(n)
\end{equation}
to a have a non-empty walk. Thus, from here on assume the above inequality for functions $f$ and $g$ for all $n$.
\end{rmk}

Now, let $\vec{b} = \vec{b}_n(f, g)$ as defined above, and consider the matrix $M(\vec{b})$. By the above definitions, $S[i]$ is $[1, n]$ if $i$ is between $1$ and $f(n)$, while $S[i]$ is $[g(n)+1, n]$ for $i$ between $f(n)+1$ and $n$. This means that $\sigma \in S_{M(\vec{b})}$ has no restrictions on the first $f(n)$ rows, and must be at least $g(n)+1$ on rows that are at least $f(n)+1$. Alternatively, on the first $g(n)$ columns, $\sigma$ is only allowed to take values up to $f(n)$. For example, if $n = 5$, $f(5) = 3$ and $g(5) = 2$, then
\begin{equation}\label{boffgexample}
\vec{b} = \vec{b}_5(f, g) = (1, 1, 1, 3, 3)
\end{equation}
and 
\begin{equation*}
M(\vec{b}) = 
\begin{bmatrix}
1 & 1 & 1 & 1 & 1\\
1 & 1 & 1 & 1 & 1\\
1 & 1 & 1 & 1 & 1\\
0 & 0 & 1 & 1 & 1\\
0 & 0 & 1 & 1 & 1\\
\end{bmatrix}
\end{equation*}

The next lemma calculates the degree $\Delta$ of a vertex in $S_M$ for above matrices $M$. 

\begin{lemma}\label{deltafg}
Let $\vec{b}_n(f, g)$ be defined as in Definition \ref{boffg} above, and let $M_n = M(\vec{b}_n(f, g))$. Then, if $\Delta$ is defined as in Equation \eqref{Deltavalue} above, 
\begin{equation*}
\Delta = \frac{n^2 - n - 2ng(n) + f(n)g(n)}{2}
\end{equation*}
\end{lemma}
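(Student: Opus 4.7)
The plan is to compute $\Delta = \sum_{i=1}^n (i - b_i)$ directly, exploiting the piecewise-constant form of $\vec{b}_n(f,g)$. Since $b_i = 1$ for $i \le f(n)$ and $b_i = g(n)+1$ for $f(n) < i \le n$, I would split the defining sum as
\begin{equation*}
\Delta = \sum_{i=1}^{f(n)} (i - 1) \;+\; \sum_{i=f(n)+1}^{n} \bigl(i - g(n) - 1\bigr),
\end{equation*}
thereby replacing the abstract sum with two arithmetic series whose values are given by standard formulas.

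Next I would evaluate each piece. The first sum is the triangular number $\tfrac{f(n)(f(n)-1)}{2}$. For the second I would separate the linear part from the constant: $\sum_{i=f(n)+1}^{n} i = \tfrac{n(n+1)}{2} - \tfrac{f(n)(f(n)+1)}{2}$, and the constant part contributes $-(n - f(n))(g(n)+1)$. This yields $\Delta$ as an explicit polynomial expression in $n$, $f(n)$, and $g(n)$.

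Finally I would simplify. The two triangular-sum contributions have $f(n)^2$ terms with opposite signs, so those cancel, and the remaining $f(n)$ terms combine cleanly. Expanding $(n - f(n))(g(n)+1)$ produces the mixed term $f(n)g(n)$ along with the $-ng(n)$ term. Collecting everything and dividing through by $2$ should yield the stated closed form.

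The main obstacle is purely algebraic bookkeeping — tracking signs and coefficients through the cancellations — rather than anything conceptual. As a sanity check I would recompute $\Delta$ for the worked example at \eqref{boffgexample}, namely $n=5$, $f(n)=3$, $g(n)=2$, where $\vec{b} = (1,1,1,3,3)$ gives $\Delta = 0 + 1 + 2 + 1 + 2 = 6$ by direct enumeration, and verify that the closed-form expression matches this value before concluding the proof.
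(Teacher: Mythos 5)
Your decomposition of the sum by index range is an equally valid direct computation and essentially the same calculation as the paper's (the paper writes $\Delta = \frac{n(n+1)}{2} - \sum_i b_i$ and evaluates $\sum_i b_i$ in two ranges; you split $\sum_i (i-b_i)$ itself into two ranges — the bookkeeping is identical). However, do carry the algebra all the way through, because it will reveal that the \emph{stated} formula has a typo: the coefficient of $f(n)g(n)$ should be $2$, not $1$. Your two triangular pieces combine to $\frac{f(n)(f(n)-1)}{2} - \frac{f(n)(f(n)+1)}{2} = -f(n)$, and after expanding $(n-f(n))(g(n)+1)$ and collecting you get
\begin{equation*}
\Delta = \frac{n^2 - n - 2ng(n) + 2f(n)g(n)}{2},
\end{equation*}
not $\frac{n^2 - n - 2ng(n) + f(n)g(n)}{2}$ as written. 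Your sanity check is exactly the right instinct and catches the discrepancy: for $n=5$, $f(n)=3$, $g(n)=2$ you correctly get $\Delta = 6$ by enumeration, whereas the formula as printed gives $\frac{25-5-20+6}{2} = 3$. The paper's own proof reproduces the same typo in its final line, and the error propagates once into the statement of Theorem \ref{chitheorem} (``$n+2\Delta = n^2 - 2ng(n) + f(n)g(n)$''), but the computational sections later in the paper (e.g.\ the proofs of Lemmas \ref{kbig} and \ref{ksmall}) use the correct $n+2\Delta = n^2 - 2ng(n) + 2f(n)g(n)$. So your proof is correct; the statement you were asked to match is the thing that is off by a factor of $2$ in its last term.
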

\begin{proof}[\bf Proof:]
From the definition of $b_i$,
\begin{align*}
\Delta &= \sum_{i=1}^n  (i - b_i) = \frac{n(n+1)}{2} - \sum_{i=1}^n b_i \\
       &= \frac{n(n+1)}{2} - f(n)\cdot 1 - (n-f(n))\cdot (g(n)+1) \\
       &= \frac{n^2 - n - 2ng(n) + f(n)g(n)}{2}
\end{align*}
as desired.
\end{proof}

\begin{notation}
From now on, when $f$ and $g$ are implied, the convention
\begin{equation}\label{Mconvention}
M_n = M(\vec{b}_n(f, g))
\end{equation}
is used to simplify notation. 
\end{notation}

The first theorem below shows that, given certain assumptions on $f(n)$ and $g(n)$, the walk always achieve cutoff in chi-squared distance. 

\begin{theorem}\label{chitheorem}
Let $\vec{b}_n = \vec{b}_n(f, g)$ for some functions $f$ and $g$ which satisfy $f(n) \geq g(n)$ for all $n$, and which also satisfy 
\begin{equation*}
\lim_{n\rightarrow \infty}\frac{f(n)}{n} = 0 \textnormal{ and } \lim_{n\rightarrow \infty} f(n) = \infty
\end{equation*}
and consider the random transposition walk on $S_{M_n}$. Let $\Delta$ denote the degree of the graph induced on $S_M$; hence, from Equation \ref{deltafg} above, $n+2\Delta = n^2 - 2n g(n) + f(n)g(n)$. Then, there is chi-squared cutoff around 
\begin{equation*}
\frac{(n+2\Delta)(\log f(n) + \log g(n))}{4f(n)}
\end{equation*}
with a window of size $\frac{n+2\Delta}{4f(n)}$. More precisely, 
\begin{enumerate}
\item 
If 
\begin{equation}\label{tchiupperbound}
t = \frac{(n+2\Delta)(\log f(n) + \log g(n))}{4f(n)} + c \frac{n+2\Delta}{4f(n)}
\end{equation}
then 
\begin{equation*}
\left\| P^t(x, \cdot) - \pi \right\|_{2, \pi} \leq 4e^{-\frac{c}{2}}
\end{equation*}
for $c > 10$ and $n$ sufficiently large. 

\item  Furthermore, if $c > 0$ and 
\begin{equation}\label{tchilowerbound}
t = \frac{(n+2\Delta)(\log f(n) + \log g(n))}{4f(n)} - c \frac{n + 2\Delta}{4f(n)}
\end{equation}
then 
\begin{equation*}
\left\| P^t(x, \cdot) - \pi \right\|_{2, \pi} \geq \frac{1}{2} e^{c/2}
\end{equation*}
for $n$ sufficiently large.  
\end{enumerate}
\end{theorem}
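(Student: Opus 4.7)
The walk on $S_{M_n}$ is vertex transitive, so Corollary \ref{alleigsvertextransitive} reduces the task to estimating $\|P^t(x,\cdot)-\pi\|_{2,\pi}^2 = \sum_{i\geq 1}\beta_i^{2t}$. By Equation \eqref{Ptransitionmatrix}, each non-trivial eigenvalue of $P$ equals $\beta_i = (n+2\lambda_i)/(n+2\Delta)$ for some non-maximal eigenvalue $\lambda_i$ of $U(\vec b_n)$. I would index the $\lambda_i$, with multiplicities $d_\mu$, by the b-partitions of Hanlon's diagonalization (Definition \ref{defbpartition}), write $\beta_\mu^{2t} = \exp\!\bigl(2t\log(1 - 2(\Delta-\lambda_\mu)/(n+2\Delta))\bigr)$, and Taylor expand in the regime $(\Delta-\lambda_\mu)/(n+2\Delta) \to 0$. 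Substituting $t = t^* \pm c w$ with $t^* = (n+2\Delta)(\log f + \log g)/(4f)$ and $w = (n+2\Delta)/(4f)$ turns the chi-squared sum into
\[
\sum_\mu d_\mu \exp\!\left(-(\log f + \log g \pm c)\,\frac{\Delta-\lambda_\mu}{f}\right),
\]
plus a lower-order Taylor error that has to be controlled.

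\textbf{Upper bound.} I would stratify b-partitions by a combinatorial depth parameter $k_\mu$ (morally, the total number of boxes of $\mu$ deviating from the trivial b-partition) and establish two asymptotic estimates: first, that $(\Delta - \lambda_\mu)/f(n) \approx k_\mu$ uniformly for $k_\mu$ bounded, and second, that $\sum_{\mu:\,k_\mu=k} d_\mu \leq (Cfg)^k/k!$ for every $k\geq 1$. Granted these, the chi-squared sum at time $t^*+cw$ is a rapidly convergent series whose leading term (depth $k=1$) is of order $fg \cdot \exp(-(\log f+\log g))\cdot e^{-c} = e^{-c}$, and the other terms telescope. Taking square roots and absorbing prefactors into the requirement $c>10$ gives the desired bound $\|P^t-\pi\|_{2,\pi} \leq 4e^{-c/2}$.

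\textbf{Lower bound.} Here I would retain only the depth-one stratum of the chi-squared sum. By the same two estimates, its contribution at $t = t^* - cw$ is at least $fg\cdot\exp(-(\log f+\log g))\cdot e^{c} = e^{c}$, and after accounting for Taylor-approximation slack this yields $\|P^t-\pi\|_{2,\pi} \geq \tfrac12 e^{c/2}$. The hypotheses $f(n) \to \infty$ and $f(n)/n \to 0$ are used precisely here: the first guarantees that the depth-one stratum is non-empty and contributes the full expected multiplicity $\sim fg$ for large $n$, while the second ensures that $(\Delta-\lambda_\mu)/(n+2\Delta)$ is genuinely small on the window $\{t^* \pm cw\}$ so the log-linearization is valid.

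\textbf{Main obstacle.} The technical core is the sharp multiplicity count $\sum_{k_\mu=k} d_\mu \leq (Cfg)^k/k!$. Hanlon's b-partitions carry tableau-type data on both blocks of $M_n$ (of sizes $f(n)$ and $n-f(n)$), and the multiplicities factor as products of hook-length-type dimensions on the two sides; a naive dimension bound only gives $(Cn^2)^k$, which is far too weak. Pushing this down to $(Cfg)^k$ is where the assumption $f(n)/n \to 0$ has to pay off in an essential way, and this is the step I expect to absorb most of the work. This parallels, but is considerably more delicate than, the symmetric-group calculation of Diaconis--Shahshahani, where the analogous count ($f=g=n$) is classical and the cutoff multiplicity $n^2$ sits at the second-largest representation dimension $(n-1)^2$.
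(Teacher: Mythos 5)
Your setup and high-level plan match the paper's: reduce to eigenvalues via vertex transitivity, stratify Hanlon's $\vec b$-partitions, and show that a leading stratum of size roughly $fg$ with eigenvalue gap roughly $2f/(n+2\Delta)$ governs the cutoff. However, the single scalar depth parameter $k_\mu$ is too coarse, and both of your proposed intermediate estimates are false as stated.

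The depth-one stratum contains (at least) two very different kinds of $\vec b$-partitions: $\alpha_1=((f-1,1),(f-g),(n-g))$, for which $\Delta-\Lambda(\alpha_1)=f$ and $\dim R_n(\alpha_1)=(f-1)g$, and $\alpha_2=((f),(f-g),(n-g-1,1))$, for which $\Delta-\Lambda(\alpha_2)=n-g$ and $\dim R_n(\alpha_2)=(n-g-1)(n-f)$. At $\alpha_2$ your claim ``$(\Delta-\lambda_\mu)/f \approx k_\mu$ for bounded $k_\mu$'' fails badly since $(n-g)/f\to\infty$, and the multiplicity bound $\sum_{k_\mu=1} d_\mu \leq Cfg$ is violated by a factor of order $n^2/(fg)$. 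The term $\dim R_n(\alpha_2)\,\Lambda_1(\alpha_2)^{2t}$ is indeed negligible at $t^*$, but only because the much larger eigenvalue gap kills the much larger dimension; a stratification that conflates $\alpha_1$ and $\alpha_2$ cannot see this cancellation and the resulting estimates are not true, let alone provable. (The ``naive bound $(Cn^2)^k$'' you worry about is not a defect of technique to be sharpened away --- it is the correct order of $\sum_{k_\mu=k} d_\mu$ and reflects a real feature of the spectrum.)

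The paper resolves this by stratifying with the full triple $|\alpha^{\mathrm{Re}}|=(i,j,k)$ recording the number of boxes below the first row in $\lambda_1$, $\mu_1$, $\lambda_2$ separately (Definition of $\alpha^{\mathrm{Re}}$ and Lemmas \ref{kbig}, \ref{ksmall}, \ref{dimensionsum}). Each coordinate then gets its own eigenvalue penalty $s_1(i), s_2(j), s_3(k)$ at scales $f$, $f-g$, $n-g$ respectively, and its own binomial prefactor in $a(i,j,k)$. In this finer bookkeeping, the $\alpha_2$-type directions (large $k$) carry a gap of order $(n-g)/(n+2\Delta)$ that swamps their larger multiplicities, and the cutoff genuinely localizes to $(i,j,k)=(1,0,0)$ driven by the scale $f$. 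Incorporating that three-index decomposition --- or at minimum separating deviations in $\lambda_1$ from those in $\mu_1$ and $\lambda_2$ --- is not an optional refinement of your plan but is necessary for the key upper bound step to be true. Your lower bound idea is salvageable if you explicitly isolate the single eigenspace $R_n(\alpha_1)$ rather than retaining the whole depth-one stratum, which is what the paper does.
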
 

As noted earlier, there are also interesting results for the more intuitive total variation distance. The following theorem states assumptions on $f$ and $g$ which imply total variation cutoff for the walk, while the theorem after it presents a class of examples for which total variation mixing occurs significantly before chi-squared mixing, and in which total variation cutoff does not occur. 

\begin{theorem}\label{TVlowerbound}
Assume that $f$ and $g$ satisfy the hypotheses for Theorem \ref{chitheorem} above, and furthermore, that $\liminf_{n \rightarrow \infty} \frac{g(n)}{f(n)} > r > 0$. Let $M_n = M(\vec{b}_n(f, g))$ as in Equation \eqref{Mconvention} above. Then, if 
\begin{equation}\label{mTVlowerbound}
 t = \frac{(n+2\Delta) (\log g(n) + \log f(n))}{4f(n)} - c \frac{n+2\Delta}{4f(n)}
\end{equation}
then the random transposition walk on $S_{M_n}$ satisfies
\begin{equation*}
\liminf_{n\rightarrow \infty} \left\| P^t(x, \cdot) - \pi \right\|_{TV} \geq \frac{1}{e} - e^{-re^c}
\end{equation*}
and thus the walk hasn't mixed by time $t$ for sufficiently large $c$. 
\end{theorem}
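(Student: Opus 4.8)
The strategy is the Diaconis--Shahshahani lower-bound method: produce a statistic on $S_{M_n}$ that is (asymptotically) Poisson under $\pi$ but is forced to be large under $P^t(x,\cdot)$, and separate the two laws via the event that the statistic vanishes. Since the walk is vertex transitive and the identity $e$ belongs to $S_{M_n}$ (by Remark \ref{fgreatherthang}, $b_i\le i$ for all $i$, so $\sigma(i)=i$ is admissible), we may take $x=e$. Started from $e$, the confined values $1,\dots,g(n)$ sit in positions $1,\dots,g(n)$, and a transposition $(i,j)$ with $i\le g(n)<f(n)<j$ is inadmissible (it would push a confined value onto a restricted position); thus a position $i\le g(n)$ that still carries one of these values lies in exactly $f(n)-1$ admissible transpositions. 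The distinguishing statistic is accordingly
\[
W(\sigma)=\#\{\, i\le g(n) : \sigma(i)=i \,\},
\]
the number of fixed points among the confined coordinates -- the analogue of the fixed-point count used for $S_n$.

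On the stationary side I would use the product structure of $S_{M_n}$: the number of $\sigma\in S_{M_n}$ with $\sigma(i_\ell)=i_\ell$ for $k$ prescribed distinct coordinates $i_1,\dots,i_k\le g(n)$ equals $\tfrac{(f(n)-k)!}{(f(n)-g(n))!}\,(n-g(n))!$, so the $k$-th factorial moment of $W$ under $\pi$ is $\prod_{\ell=0}^{k-1}\tfrac{g(n)-\ell}{f(n)-\ell}$. Because $g(n)\le f(n)$ and $f(n)\to\infty$, along any subsequence on which $g(n)/f(n)\to\rho\le 1$ these moments tend to $\rho^k$, so $W$ converges to a $\mathrm{Poisson}(\rho)$ variable, and since $\rho\le 1$ we get $\liminf_n \pi(W=0)\ge e^{-1}$.

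For the dynamical side, run the chain from $e$ and follow, for each confined coordinate, whether the card sitting there has ever been moved. While positions $i$ and $j$ (distinct, $\le g(n)$) are both untouched they carry confined values, so exactly $2f(n)-3$ admissible transpositions meet $\{i,j\}$; hence the probability that positions $i$ and $j$ are both still untouched at time $t$ is $\bigl(1-\tfrac{2(2f(n)-3)}{n+2\Delta}\bigr)^{t}$. Inserting $t$ from \eqref{mTVlowerbound} and the value of $n+2\Delta$ from Lemma \ref{deltafg} makes this $e^{c}/(f(n)g(n))\cdot(1+o(1))$ -- whereas a single position stays untouched only with probability $\approx e^{c/2}/\sqrt{f(n)g(n)}$, so a pair decorrelates at twice the rate, which is exactly why $e^{c}$ (not $e^{c/2}$) governs the pair count. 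Since an untouched pair forces two fixed points, summing over the $g(n)(g(n)-1)$ ordered pairs gives
\[
\expect_{P^t}\!\bigl[W_t(W_t-1)\bigr]\ \ge\ \frac{g(n)(g(n)-1)}{f(n)g(n)}\,e^{c}\,(1+o(1)),
\]
which exceeds $r e^{c}$ for all large $n$ by the hypothesis $\liminf_n g(n)/f(n)>r$. Together with a companion upper bound on $\expect_{P^t}[W_t(W_t-1)]$ of the same order (from a finer accounting that also tracks cards which have moved but returned), a second-moment estimate then shows $W_t\ge 1$ with probability $1-e^{-re^c}$ up to lower-order errors, so that $\|P^t(x,\cdot)-\pi\|_{TV}\ge \pi(W=0)-P^t(x,\{W=0\})\ge \tfrac1e-e^{-re^c}$ in the liminf, as claimed for $c$ large.

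The single-step transposition counts and the stationary factorial moments are elementary; the crux -- the ``additional combinatorial argument'' referred to in the statement -- is controlling the joint distribution of $W_t$ beyond its first two moments under the restricted, re-sampling dynamics. Because the successive chosen transpositions are \emph{not} independent (the admissible set depends on the current configuration), one must estimate uniformly the probabilities that $k$ prescribed confined coordinates are simultaneously fixed at time $t$, combining the ``never moved'' cards with the contribution of cards that have moved but returned, and show these are small enough to pin $P^t(x,\{W_t=0\})$ below $e^{-re^c}$. I expect this higher-moment/concentration bookkeeping, rather than any single clever step, to be the main obstacle; it is where the precise window $\tfrac{n+2\Delta}{4f(n)}$, the hypothesis $\liminf g/f>r$, and the single-card decorrelation rate $2f(n)/(n+2\Delta)$ all come into play.
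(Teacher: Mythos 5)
Your high-level strategy is the same as the paper's: take the distinguishing event $A_n=\{W\ge 1\}$ (at least one small fixed point), show via inclusion-exclusion/factorial moments that $\pi(W=0)$ is bounded below by roughly $e^{-1}$, and show that under $P^t(e,\cdot)$ the event $W=0$ is unlikely by tracking which of the $g(n)$ small positions are still untouched. Your stationary-side computation (the $k$-th factorial moment is $\prod_{\ell<k}\frac{g(n)-\ell}{f(n)-\ell}$, hence Poisson($\rho$) along subsequences with $g/f\to\rho$) is exactly equivalent to the paper's inclusion-exclusion argument and is fine; one should add that $g(n)\to\infty$ (which follows from $f(n)\to\infty$ and $\liminf g/f>0$) is what licenses the Poisson limit.

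The gap is in the dynamical side, and you have in fact flagged it yourself but underestimated its nature. A second-moment / Paley--Zygmund argument cannot yield the claimed bound: from $\expect[U_t]\approx m$ and $\expect[U_t(U_t-1)]\approx m^2$ (where $U_t$ counts untouched small positions), Chebyshev or Paley--Zygmund gives $P(U_t=0)\lesssim 1/m$, which is polynomially small in $m$, whereas the statement requires the exponentially small quantity $e^{-\lambda}$ for $\lambda$ the Poisson parameter. To get the exponential form you need a genuine Poisson approximation for $U_t$ (all factorial moments, or a Chen--Stein bound). The paper avoids the ``finer accounting that also tracks cards which have moved but returned'' that you worry about by a cleaner route: it tracks only the set of positions $i\le g(n)$ that have \emph{ever} been hit, observes that at each step this is driven by a mixture of one-coupon and two-coupon collection events with explicit probabilities $\frac{2g(n)(f(n)-g(n))}{n+2\Delta}$ and $\frac{g(n)^2}{n+2\Delta}$, and invokes standard coupon-collector concentration (\cite{Feller1}) to conclude that the uncollected count is asymptotically Poisson at the right parameter. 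Because any never-hit position is automatically a fixed point, this yields a lower bound on $P^{t'}(e,A_n)$ without ever bounding $\expect[W_t(W_t-1)]$ from above or worrying about returning cards. In short, you should replace the second-moment machinery with the coupon-collector framing; the ``additional combinatorial argument'' alluded to in the introduction is exactly this coupon-collector reduction, not a moment hierarchy.

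A further remark: your own first-moment calculation gives $\expect U_t\approx\sqrt{g(n)/f(n)}\,e^{c/2}$, yet you then assert $P(W_t\ge 1)\ge 1-e^{-re^c}$. If $U_t$ is asymptotically Poisson, the parameter must equal the mean, so the natural conclusion from your numbers is $1-e^{-\sqrt{\rho}\,e^{c/2}}$, not $1-e^{-re^c}$. The discrepancy appears to stem from reading off the second factorial moment ($\approx\rho e^{c}$) as though it were the Poisson parameter, but it is instead the \emph{square} of the parameter. (Either way the resulting bound still tends to $1/e$ as $c\to\infty$, which is all that is needed to conclude the walk has not mixed, but you should be consistent with your own moment estimates rather than pattern-matching the displayed constant.)

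Finally, a small helpful observation you did not use: the paper first replaces the given time $t$ with a larger time $t'$ that depends only on $\log g(n)$, using $\limsup(\log f(n)-\log g(n))<-\log r$. This decouples the coupon-collector window from $f(n)$ and is what makes the asymptotics clean; if you try to work directly with $t$ and the symmetric combination $\log f(n)+\log g(n)$, the bookkeeping becomes harder and the hypothesis $\liminf g/f>r$ enters in a less transparent way.
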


\begin{theorem}\label{TVfastmixingexample}
Let $g(n) = 1$, and assume that 
\begin{equation*}
\lim_{n\rightarrow \infty} f(n) = \infty \text{ and } f(n) \leq \frac{n}{5\log n} \text{ for sufficiently large $n$.}
\end{equation*}
Then, the random transposition walk on $S_{M_n}$ mixes in total variation distance in order $\frac{n+2\Delta}{f(n)}$ time. Furthermore, the walk does not have total variation cutoff. 
\end{theorem}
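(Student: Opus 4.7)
The key structural observation is that with $g(n) = 1$, the set $S_{M_n}$ equals $\{\sigma \in S_n : \sigma^{-1}(1) \in [1, f(n)]\}$: the only binding constraint is on the position of card $1$. Writing $V_t = \sigma_t^{-1}(1)$, a direct count of valid transpositions involving position $V_t$ shows that $V_t$ is itself a Markov chain on $[1, f(n)]$ that stays put with probability $1 - q$ and otherwise jumps uniformly to $[1, f(n)] \setminus \{V_t\}$, where $q = 2(f(n)-1)/(n+2\Delta)$. Solving this lazy-resample chain explicitly yields
\begin{equation*}
\|V_t - \pi_V\|_{TV} = (1 - 1/f(n))(1 - q_0)^t, \qquad q_0 = \tfrac{qf(n)}{f(n)-1} \asymp \tfrac{f(n)}{n+2\Delta}.
\end{equation*}

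For the no-cutoff direction, I use the contraction $\|P^t(\mathrm{id}, \cdot) - \pi\|_{TV} \geq \|V_t - \pi_V\|_{TV}$ under the projection $\sigma \mapsto \sigma^{-1}(1)$, which forces $\tau_{\mathrm{mix}}(\epsilon) \geq \log(1/\epsilon)/q_0 \cdot (1 + o(1))$. Combined with the matching upper bound described below, this gives $\tau_{\mathrm{mix}}(\epsilon)/\tau_{\mathrm{mix}}(1 - \epsilon) \to \log(1/\epsilon)/\log(1/(1-\epsilon))$, which is not $1$ for any fixed $\epsilon \in (0, 1/2)$, ruling out cutoff.

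For the upper bound I apply the decomposition
\begin{equation*}
\|P^t - \pi\|_{TV} \leq \|V_t - \pi_V\|_{TV} + \expect_{V_t}\bigl[\|P^t(\cdot \mid V_t) - \pi(\cdot \mid V_t)\|_{TV}\bigr].
\end{equation*}
The first term is already handled. For the second, each chain step is either (i) the identity, (ii) a uniform random transposition on $[n] \setminus \{V_t\}$, or (iii) a ``$V$-move,'' the transposition $(V_t, V_{t+1})$ shifting card $1$ within $[1, f(n)]$. Each type (iii) step is a measure-preserving bijection between the fibers $\{V = V_t\}$ and $\{V = V_{t+1}\}$, so it does not change the TV distance to the fiber-uniform measure. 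The on-fiber mixing is therefore driven entirely by the type (ii) steps, which are exactly uniform random transpositions on $n - 1$ positions. By Diaconis--Shahshahani, $\tfrac{1}{2}(n-1)\log(n-1)$ such transpositions suffice to mix $S_{n-1}$; the hypothesis $f(n) \leq n/(5 \log n)$ ensures that the expected number of type (ii) steps in $C(n+2\Delta)/f(n)$ chain steps is at least $5Cn \log n$, which is ample for any fixed target $\epsilon$.

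The main obstacle is making rigorous the interaction between type (ii) and type (iii) steps, since the ``active set'' $[n] \setminus \{V_t\}$ shifts each time a $V$-move occurs, and so the type (ii) transpositions are not literally a fixed random transposition walk on a single $(n-1)$-element set. I would handle this by conditioning on the entire $V$-trajectory and applying a standard Diaconis--Shahshahani coupling to the non-$1$ cards between the identity-initialized chain and a stationary copy; since each $V$-move is a deterministic transposition given $(V_t, V_{t+1})$, it can be conjugated through both copies of the coupling simultaneously, leaving a genuine random transposition walk on $S_{n-1}$ to which the Diaconis--Shahshahani analysis applies directly. Combining this with the bound on the $V_t$-marginal produces $\tau_{\mathrm{mix}}(\epsilon) = O(\log(1/\epsilon)(n+2\Delta)/f(n))$, completing both assertions of the theorem.
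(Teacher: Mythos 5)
Your route is genuinely different from the paper's, and the skeleton is sound. The paper conditions on $T$, the first time value $1$ moves, and exploits the fact that until that time the other $n-1$ cards evolve by random transpositions on a single \emph{fixed} $(n-1)$-element set of positions --- so the moving-frame problem you worry about simply never arises, and one only needs to control the distribution at time $T$ itself (Lemma \ref{Tconditioning}). You instead lump the chain down to the position $V_t = \sigma_t^{-1}(1)$, solve the lumped chain explicitly, and analyze the fibers. Your $V$-chain computations are correct: it is a lazy uniform-resample chain with $q_0 = 2f(n)/(n+2\Delta)$, so $\|V_t - \pi_V\|_{TV} = (1 - 1/f(n))(1-q_0)^t$; and contraction under $\sigma \mapsto \sigma^{-1}(1)$ gives a valid lower bound. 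In fact the paper's own no-cutoff argument, which tracks $\mathbb{P}(X_k(1)=1)$, is precisely this projection bound restricted to the single state $V_t = 1$, so your version is sharper. The TV chain-rule decomposition into the $V$-marginal and the fiber term is also correct.

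The step you flag as ``the main obstacle'' is the one place where the sketch, as written, would not go through. A stationary copy of the chain cannot share the identity-started chain's $V$-trajectory: its $V_0$ is uniform on $[1, f(n)]$ rather than $1$, and since which ordered pairs $(i,j)$ are admissible at each step depends on the current value of $V$, the two copies draw from different admissible-move sets and cannot simply be driven by common randomness. The fix is cleaner than what you propose and requires no second copy at all. Condition a \emph{single} chain on its full $V$-trajectory; build up the bijections $\phi_s: [n]\setminus\{V_0\} \to [n]\setminus\{V_s\}$ induced by the successive $V$-transpositions; then set $\tilde\sigma_s := \sigma_s \circ \phi_s$ on the non-$1$ positions. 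One checks that $\tilde\sigma_s$ is unchanged across $V$-moves and, at each non-$V$-move, is hit by a uniformly random transposition (or the identity) on the fixed $(n-1)$-element set $[n]\setminus\{V_0\}$. Thus $\tilde\sigma_s$ is, up to an inessential difference in holding probability, a lazy random transposition walk on a fixed copy of $S_{n-1}$, and its TV distance to uniform \emph{is} the fiber-conditional TV distance you want; Diaconis--Shahshahani then applies directly, and your step-count is ample since $f(n) \leq n/(5\log n)$. So the overall strategy is correct, but drop the ``coupling to a stationary copy'' framing in favor of this single-chain conjugation.

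One overstatement worth fixing: the precise claim $\tau_{\mathrm{mix}}(\epsilon)/\tau_{\mathrm{mix}}(1-\epsilon) \to \log(1/\epsilon)/\log(1/(1-\epsilon))$ needs the fiber term to be negligible relative to the $V$-marginal at the relevant times, which fails at the boundary $f(n) = \Theta(n/\log n)$, where the $S_{n-1}$ mixing time is a nonvanishing fraction of $1/q_0$. To rule out cutoff you do not need the exact limit; your exponential $V$-marginal lower bound $\tau(\epsilon) \gtrsim \log(1/\epsilon)/q_0$ together with any $O((n+2\Delta)/f(n))$ upper bound on $\tau(1/2)$ already forces the ratio away from $1$ for small $\epsilon$ --- and the latter upper bound is exactly what the paper's cruder argument supplies.
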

\begin{rmk}
Hanlon's conjecture in his Theorem 5.6 concerns the case where $g(n)= 1$ and $f(n) = n^\alpha$. As such, it is one of the many examples covered by the theorems above. After a little simplification, Theorem \ref{chitheorem} states that this walk has cutoff around time $\frac{\alpha}{4}n^{2- \alpha} \log(n)$ with a window of size $n^{2-\alpha}$. It is also clear that this case is one of the examples covered by Theorem \ref{TVfastmixingexample}. Therefore, total variation mixing occurs in order $n^{2-\alpha}$ time, and occurs without cutoff. Thus, the above results resolve the conjecture. 
\end{rmk} 

The next section gives heuristics and proves Theorem \ref{TVlowerbound}, while Section \ref{fastmixingchapter} proves Theorem \ref{TVfastmixingexample}, and also proves that the walk is vertex transitive. The chi-squared results are then proved. Section \ref{hanlonpreviouswork} reviews the necessary background material, restates Hanlon's eigenvalue results and provides examples of how they apply for the specific matrices $M_n = M(\vec{b}_n(f, g))$ under consideration. Section \ref{boundingtheeigenvalues} provides bounds on the eigenvalues of the transition matrix $P$, which will be used in the following section to do lead term analysis and prove the lower bound in Theorem \ref{chitheorem}. Finally, Sections \ref{dimensionofeigenspacecalc} and \ref{chisquaredupperchapter} prove the upper bound in Theorem \ref{chitheorem}. 

\section{Heuristics and Total Variation Lower Bound}\label{sectheuristics}

This section gives heuristics and proves Theorem \ref{TVlowerbound}. First, here is an argument explaining why a total variation cutoff is expected around 
\begin{equation*}
 t = \frac{(n+2\Delta)(\log f(n) + \log g(n))}{4f(n)}
\end{equation*}
if $f(n)$ and $g(n)$ are of the same order, as the conjunction of Theorems \ref{chitheorem} and \ref{TVlowerbound} implies. 

Consider the instructive case $f(n)= g(n)$. For example, if $n= 5$ and $f(n) = g(n) = 2$, the restriction matrix is
\begin{equation*}
M = \begin{bmatrix} 
1 & 1 & 1 & 1 & 1 \\
1 & 1 & 1 & 1 & 1 \\
0 & 0 & 1 & 1 & 1 \\
0 & 0 & 1 & 1 & 1 \\
0 & 0 & 1 & 1 & 1 \\
\end{bmatrix}
\end{equation*}
In this case, $\sigma \in S_M$ can take values up to $f(n)$ on the first $f(n)$ columns. This makes it easy to see that $\sigma\in S_m$ can be represented as a pair of permutations, one on $\{1, 2, \dots, f(n)\}$ and the other on $\{f(n) +1, f(n)+2, \dots, n\}$. Thus, 
\begin{equation*}
S_M \cong S_{f(n)} \times S_{n - f(n)}
\end{equation*}

Now consider how fast the walk mixes. It has been shown by Diaconis and Shahshahani that the transposition walk on $S_k$ has cutoff at time $\frac{1}{2} k \log k$ with a window of size $k$ \cite{DiaconisandShah}. At each step, the present walk uses either a transposition in $S_{f(n)}$ (with probability $\frac{f(n)^2}{n+2\Delta}$) or a transposition in $S_{n - f(n)}$ (with probability $\frac{(n-f(n))^2}{n+2\Delta}$). This is equivalent to two separate walks: the walk on $S_{f(n)}$, slowed down by a factor of $\frac{n+2\Delta}{f(n)^2}$, and the walk on $S_{n -f(n)}$, slowed down by a factor of $\frac{n+2\Delta}{(n - f(n))^2}$. Thus, the walk on $S_{f(n)}$ has cutoff around time
\begin{equation*}
t_1 = \frac{1}{2} f(n) \log f(n) \cdot \frac{n+2\Delta}{f(n)^2} = \frac{(n+2\Delta)\log f(n)}{2 f(n)}
\end{equation*}
with a window of size $f(n) \cdot \frac{n+2\Delta}{f(n)^2} = \frac{n+2\Delta}{f(n)}$, while the walk on $S_{n-f(n)}$ analogously has cutoff around time 
\begin{equation*}
 t_2 = \frac{(n+2\Delta)\log(n - f(n))}{2(n - f(n))}
\end{equation*}
with a window of size $\frac{n+2\Delta}{n - f(n)}$. It is easy to see (and is  proved below in Lemma \ref{boundcompare}), that if $f(n)$ is comparitively small with respect to $n$, then $t_1$ is the larger of the two times. Since in this case, $f(n)= g(n)$, $t_1$ gives precisely the answer of Theorems \ref{chitheorem} and \ref{TVlowerbound}. The above analysis shows that the first $g(n)$ columns are the limiting component of the walk. Theorem \ref{chitheorem} assumes that $\frac{f(n)}{n} \rightarrow 0$ to simplify computations.

\begin{rmk} \label{whyfgoestoinfinity}
This discussion shows why $f(n) \rightarrow \infty$ is assumed for the main theorems -- in order to reasonably talk about asymptotics, the limiting term should at least go to $\infty$. 
\end{rmk}

Before proving Theorem \ref{TVlowerbound}, a supporting lemma calculating the size of $S_{M(\vec{a})}$ is needed.

\begin{lemma}\label{sizelemma}
If $\vec{a} = (1, 1, \dots, 1, y+1, \dots, y+1)$ is a vector of length $n$, where the number of initial $1$s is precisely $x$, and $x \geq y$, then  
\begin{equation*}
\left|S_{M(\vec{a})}\right| = \frac{x!(n-y)!}{(x-y)!}
\end{equation*}
\end{lemma}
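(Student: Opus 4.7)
The plan is to count $|S_{M(\vec{a})}|$ directly, using the structure of $\vec{a}$. By the definition of $\vec{a}$, a permutation $\sigma$ lies in $S_{M(\vec{a})}$ if and only if $\sigma(i)$ is unrestricted for $i \leq x$, while $\sigma(i) \in [y+1, n]$ for $i > x$. Equivalently, passing to the inverse viewpoint, the $y$ values $\{1, 2, \dots, y\}$ are forbidden from appearing in the last $n-x$ positions, so each of them must occur in one of the first $x$ positions. All other values (there are $n-y$ of them) and all other positions (there are $n-y$ of them, namely the $x-y$ unused positions in $\{1, \dots, x\}$ together with the $n-x$ positions in $\{x+1, \dots, n\}$) are unconstrained beyond the bijection requirement.

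Given this reformulation, I would carry out the count in two independent stages. First, choose an injection from the value set $\{1, 2, \dots, y\}$ into the position set $\{1, 2, \dots, x\}$; since $x \geq y$, the number of such injections is $x(x-1)\cdots(x-y+1) = \frac{x!}{(x-y)!}$. Second, extend this partial assignment to a full bijection by matching the remaining $n-y$ values $\{y+1, \dots, n\}$ to the remaining $n-y$ positions arbitrarily, which contributes $(n-y)!$ choices. Multiplying these two independent counts gives
\begin{equation*}
|S_{M(\vec{a})}| = \frac{x!}{(x-y)!}\cdot (n-y)! = \frac{x!(n-y)!}{(x-y)!},
\end{equation*}
as claimed.

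There is essentially no obstacle here; the only thing worth being careful about is the hypothesis $x \geq y$, which is exactly what guarantees that step one is possible (and is consistent with the non-emptiness condition from Lemma \ref{regulargraph} applied to $\vec{a}$). Thus the proof is simply the clean two-stage counting argument outlined above.
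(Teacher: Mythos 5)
Your proof is correct and is essentially the same argument as the paper's: both count by first placing the $y$ constrained values among the first $x$ positions ($x!/(x-y)!$ ways) and then filling in the remaining $n-y$ positions arbitrarily ($(n-y)!$ ways). The paper phrases it as a column-by-column count of $\alpha^{-1}(1),\dots,\alpha^{-1}(y)$, while you phrase it as counting injections, but the decomposition is identical.
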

\begin{proof}[\bf Proof:]
Count the number of elements in $S_{M(\vec{a})}$ column by column. How many choices are there for $\alpha^{-1}(1)$? Since $\alpha(i)$ is only allowed to be $1$ for $i\leq x$, there are $x$ choices for the first column. Similarly, there are $(x-1)$ choices for $\alpha^{-1}(2)$, and continuing, there are $(x-i+1)$ choices for $\alpha^{-1}(i)$ for $i\leq y$. Thus, the number of choices for the tuple $(\alpha^{-1}(1), \alpha^{-1}(2), \dots, \alpha^{-1}(y))$ is precisely
\begin{equation*}
x\cdot(x-1) \cdots (x-y+1) = \frac{x!}{(x-y)!} 
\end{equation*}

Now, consider the number of choices for $\alpha^{-1}(y+1)$. Since $\alpha(i)$ is allowed to be equal to $y+1$ for every single $i\leq n$, and furthermore, $\alpha^{-1}(1), \cdots, \alpha^{-1}(y)$ are already specifiied, there are  precisely $n - y$ choices for $\alpha^{-1}(y+1)$, $n-y-1$ choices for $\alpha^{-1}(y+2)$, etc. Thus, the total number of choices for these is precisely $(n-y)\cdot(n-y-1) \cdots 2 \cdot 1 = (n-y)!$. 
Multiplying these together gives
\begin{equation*}
\left| S_{M(\vec{a})}\right| = \frac{x!}{(x-y)!} (n-y)! = \frac{x!(n-y)!}{(x-y)!}
\end{equation*}
as required. 
\end{proof}

Theorem \ref{TVlowerbound} gives a lower bound on the mixing time. Here, an explicit set $A$ is found such that for $t$ in the above theorem, $| P^t( id, A) - \pi(A) |$ is large. In their paper, Diaconis and Shahshahani use the set of permutations with at least $1$ fixed point \cite{DiaconisandShah}. The heuristics above suggest trying a modification: clearly, if $f(n) = g(n)$, the fixed points in the top $f(n) \times f(n)$ square of $M_n$ could be used. In general, the number of fixed  points in the first $g(n)$ columns works. 

Before the proof, some notation mainly used for the next two sections is needed. Denote the Markov chain by $(X_k)_{k=0}^\infty$, where 
\begin{equation*}
X_k = (X_k(1), X_k(2), \dots, X_k(n))
\end{equation*}
and let the random row transposition used at time $k$ be $r_k$. Clearly, $X_k = X_0 r_1 r_2 \dots r_k$. 

\begin{proof}[\bf Proof of Theorem \ref{TVlowerbound}:]
It must be shown that at time 
\begin{equation}\label{TVlowerboundrestated}
 t = \frac{(n+2\Delta) (\log g(n) + \log f(n))}{4f(n)} - c \frac{n+2\Delta}{4f(n)}
\end{equation}
the walk on $S_{M_n}$ has not yet mixed, where $M_n = M(\vec{b}_n(f,g))$. By assumption, 
\begin{equation*}
\liminf_{n \rightarrow \infty} \frac{g(n)}{f(n)} = r > 0.
\end{equation*}
This implies that $\limsup_{n\rightarrow \infty} (\log f(n) - \log g(n)) < - \log r$. Thus, for sufficiently large $n$, 
\begin{equation*}
t < \frac{(n+2\Delta) \log g(n)}{2f(n)} - (c+\log r) \frac{n+2\Delta}{4f(n)}.
\end{equation*}
Hence it suffices to show that the walk has not mixed at the time on the right-hand side above. Call this time $t'$. 

Define $A_n$ as 
\begin{equation*}
A_n = \left\{ \sigma \in S_{M_n} \text{ such that } \sigma(i) = i \text{ for at least one } i \leq g(n)\right\}
\end{equation*}
Call $i \leq g(n)$ such that $\sigma(i) = i$ a {\it small fixed point}. The first step calculates an upper bound for $\pi(A_n)$ for the uniform distribution $\pi$ on $S_{M_n}$. This is done by a standard inclusion-exclusion argument. Consider the number of $\sigma \in S_{M_n}$ such that $\sigma(i) = i$ for a specified value of $i\leq g(n)$. In the language of permutation matrices, to pick such a $\sigma$, mark the $1$ in position $(i, i)$, cross out the $i$th row and $i$th column, and select the values of $\sigma$ on the remaining rows and columns. It is easy to check that this corresponds to picking an element in $S_{M(\vec{a})}$, for $\vec{a} = (1, 1, \dots, 1, g(n), \dots, g(n))$ where $\vec{a}$ starts with $f(n)-1$ ones and the vector is of length $n-1$. Thus, from Lemma \ref{sizelemma} above, 
\begin{equation*}
\left| \{ \sigma \in S_{M_n} \left| \right. \sigma(i) = i \} \right| = \left| S_{M(\vec{a})} \right| = \frac{(f(n)-1)!(n - g(n))!}{(f(n) - g(n))!}
\end{equation*}
From the same lemma, $S_{M_n} = \frac{f(n)!(n -g(n))!}{(f(n) - g(n))!}$. Thus,
\begin{equation*}
\left| \{ \sigma \in S_{M_n} \left| \right. \sigma(i) = i \} \right|  = \frac{\left| S_{M_n} \right|}{f(n)}.
\end{equation*}
In a similar way, given $k$ distinct values of $i_1, i_2, \dots, i_k$ in $[1, g(n)]$, choose $\sigma$ satisfying $\sigma(i_j) = i_j$ for all $j$ by crossing out all the rows $i_j$ and columns $i_j$. This would correspond to the vector $\vec{a}' = (1, 1, \dots, 1, g(n)+1 - k, \dots, g(n)+1-k)$ of length $n-j$, and thus 
\begin{align*}
\left| \{ \sigma \in S_{M_n} \left| \right. \sigma(i_1) = i_1, \dots, \sigma(i_k) = i_k \} \right|  &= \frac{(f(n) - k)!(n - g(n))!}{(f(n) - g(n))!} \\
 &= \left| S_{M_n} \right| \frac{f(n)!}{(f(n) - k)!}
\end{align*}
Picking an element $\sigma$ that has precisely $k$ small fixed points involves choosing the values $i_1, i_2, \dots, i_k$ above. Thus, 
\begin{align*}
\left| \{ \sigma \in S_{M_n} \left| \right. \text{$\sigma$ has $k$ small fixed points} \}\right| &= {g(n) \choose k} \left| S_{M_n} \right| \frac{f(n)!}{(f(n) - k)!}\\
& = \left| S_{M_n} \right| \frac{g(n) \cdots (g(n) - k+1)}{k! f(n) \cdots (f(n) - k+1)}
\end{align*}
Now, the standard inclusion-exclusion argument gives
\begin{equation*}
|A_n| =  \left| S_{M_n} \right| \left( \frac{ g(n)}{f(n)}- \frac{g(n)(g(n)-1)}{2f(n)(f(n) - 1)} +\frac{g(n)(g(n)-1)(g(n)-2)}{6f(n)(f(n) - 1)(f(n)-2)} - \dots \right) 
\end{equation*}
From $f(n) \rightarrow \infty$, and $\liminf \frac{g(n)}{f(n)} >0$, $g(n)\rightarrow \infty$. Thus, the above sum becomes arbitrarily well-approximated by
\begin{equation*}
 \left| S_{M_n} \right| \left( \frac{g(n)}{f(n)}- \frac{g(n)^2}{2!f(n)^2} + \frac{g(n)^3}{3!f(n)^3} - \dots\right) =  \left| S_{M_n} \right| (1- e^{-g(n)/f(n)})
\end{equation*}
and thus $\pi(A_n)$ is arbitrarily well-approximated by $1 - e^{-g(n)/f(n)}$. Therefore, noting that $1 - e^{-x}$ is an increasing function, and that $\frac{g(n)}{f(n)} \leq 1$,
\begin{equation*}
\limsup \pi(A_n) \leq 1 - \frac{1}{e}.
\end{equation*}

Now, consider the probability of $A_n$ after $t'$ steps of the random walk, started at the identity. Define $F_k$ to be the number of small fixed points of our random walk at time $k$: that is, the number of rows $i\leq g(n)$ such that $X_k(i) = i$. Consider the distribution of $F_k$. Note that since the walk starts at the identity, any $i \leq g(n)$ that has not yet been tranposed with anything is in $F_t$. Furthermore, if $X_k(i) \leq g(n)$, row $i$ can only be transposed with row $j$ if $j\leq f(n)$ -- that is, a row corresponding to one of the first $g(n)$ columns can only be transposed with one of the first $f(n)$ rows. Otherwise, $X_{k+1}(j) = X_k(i) \leq g(n)$, which is not allowed for $j > f(n)$.

From the arguments above, $F_0 = \{1, 2, \dots, g(n)\}$, and furthermore, row $i$ can leave the set $F_{k-1}$ only by being transposed with a row that is at most $f(n)$. Consider two cases -- first, the next step could transpose two rows which are both at most $g(n)$. This has probability
\begin{align*}
\mathbb{P} \left( r_k = (ij) \left| \right. i, j \leq g(n)\right) = \frac{g(n)^2}{n+2\Delta}.
\end{align*}
Secondly, the next step could transpose a pair of rows one of which is below $g(n)$ and the other one is between $g(n)$ and $f(n)$. This has probability
\begin{align*}
\mathbb{P} \left( r_k = (ij) \left| \right. i \leq g(n) < j \leq f(n) \text{ or vice versa}\right) = \frac{2g(n)(f(n) - g(n))}{n+2\Delta}.
\end{align*}

The argument proceeds by estimating the probability of having transposed each element from $\{1, 2, \dots, g(n)\}$ at time $t'$. Note that if two rows below $g(n)$ are transposed at time $k$, then they both leave the set $F_{k-1}$, whereas the second case above corresponds to only one row leaving the set. This rephrases the question as the following coupon collectors problem: at each step, collect two coupons with probability $\frac{g(n)^2}{n+2\Delta}$ and one coupon with probability $\frac{2g(n)(f(n) - g(n))}{n+2\Delta}$. What is the probability of not collecting every coupon from $\{1, 2, \dots, g(n)\}$ by time $t'$? 

Begin by calculating the mean number of coupons collected at each step, counting each coupon however many times it's collected. This number is clearly  
\begin{align*}
\mathbb{E} \left( \text{coupons collected in one step} \right) &= 2 \cdot \frac{g(n)^2}{n+2\Delta} + 1 \cdot \frac{2g(n)(f(n) - g(n))}{n+2\Delta} \\
&=\frac{2f(n)g(n)}{n+2\Delta} 
\end{align*}
Similarly, the variance of the number of coupons collected in one step is bounded above by $\frac{4f(n) g(n)}{n+2\Delta}$. Thus, after $t'$ total steps of the walk, the total number of coupons collected is concentrated around 
\begin{align*}
t' \cdot \frac{2 f(n) g(n)}{n+2\Delta} &= \left(\frac{(n+2\Delta) \log g(n)}{2f(n)} - (c+\log r)\frac{n+2\Delta}{2f(n)}\right) \cdot \frac{2 f(n) g(n)}{n+2\Delta}\\
&= g(n) \log g(n) - (c+\log r) g(n)
\end{align*}
with a window of at most $\sqrt{2 g(n) \log g(n)}$. Since $g(n)$ approaches $\infty$, and since $\sqrt{2g(n) \log g(n)}$ is $o(g(n))$, using standard coupon collector arguments \cite{Feller1} we can conclude that 
\begin{equation*}
P^{t'}(id, A_n)  \rightarrow  1 -  e^{-e^{c+\log r}} = 1 - e^{-re^c} 
\end{equation*}
as $n\rightarrow \infty$.

Therefore, 
\begin{align*}
\liminf_{n\rightarrow \infty} \left\| P^{t'} (id, \cdot)  -\pi \right\|_{TV} &\geq \liminf_{n\rightarrow \infty} \left( P^{t'}(id, A_n) - \pi(A_n)\right)\\
  &\geq \left( 1 -  e^{-re^{c}}\right) - \left( 1 -  \frac{1}{e}\right) \\
  &= \frac{1}{e} - e^{-re^{c}},
\end{align*}
the desired inequality for $t'$. As noted earlier, since for sufficiently large $n$, $t\leq t'$, and since distance to stationarity is non-decreasing, 
\begin{equation*}
\liminf_{n\rightarrow \infty} \left\| P^t (id, \cdot)  -\pi \right\|_{TV}  \geq \frac{1}{e} - e^{-re^{c}}
\end{equation*}
and so the walk hasn't mixed by time $t$, as required. 
\end{proof}

\section{Vertex Transitivity and Example of Fast Mixing} \label{fastmixingchapter}

This section contains the proof of some of the total variation results, in particular Theorem \ref{TVfastmixingexample}. Here, $g(n) = 1$, which means that $\vec{b}_n(f, g) = (1,  \dots, 1, 2, \dots, 2)$. Hence $\sigma \in S_{M_n}$ has the following restrictions: it's allowed to be at most $f(n)$ on column $1$, and has no restrictions at all on the remaining columns. It is easy to calculate that the probability of the first column being tranposed in a particular step is 
\begin{equation*}
P( \text{Column $1$ tranposed at step $k$}) = \frac{2f(n)-1}{n+2\Delta} \approx \frac{2f(n)}{n^2}
\end{equation*}
using the value for $\Delta$ derived in Lemma \ref{deltafg}. 

Now, since $f(n) \leq \frac{n}{5\log n}$ for sufficiently large $n$, the first column gets tranposed at most every $\frac{5}{2}n \log n$ steps or so. Furthermore, the remaining $n-1$ columns have no restrictions, and hence the walk without the first column is just the walk on $S_{n-1}$. Since this walk has cutoff at time $\frac{1}{2}(n-1) \log (n-1)$, these $n-1$ columns should be thoroughly mixed by the time the first column is used at all. Therefore, the walk should be mixed as soon as the first column is used. Furthermore, since mixing is driven by one column, a cutoff is not expected. 

In order to simplify calculations, it is first shown that for any $\vec{b} = \vec{b}_n(f, g)$, $S_{M(\vec{b})}$ is vertex transitive. This will simplify the  proof of Theorem \ref{TVfastmixingexample}, as the walk may be started at the identity. Furthermore, this will also be useful later for spectral analysis, as Corollary \eqref{alleigsvertextransitive} will be used. 

\begin{lemma}\label{vertextrans}
For $M_n = M(\vec{b}_n(f, g))$, the graph induced on $S_{M_n}$ by the random transposition walk is vertex transitive.
\end{lemma}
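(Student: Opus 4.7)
The plan is to exhibit a group $G$ of walk-preserving bijections of $S_{M_n}$ that acts transitively on the vertex set. Since the graph is $\Delta$-regular (Lemma \ref{regulargraph}) and all non-loop edges carry equal weight $\tfrac{2}{n+2\Delta}$ with all self-loops carrying weight $\tfrac{n}{n+2\Delta}$, any bijection preserving the edge set automatically commutes with $P$, so it suffices to produce graph automorphisms.

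Let $L = S_{[1,g]} \times S_{[g+1,n]} \leq S_n$ denote the subgroup of permutations preserving the partition of values $\{1,\dots,n\}$ into low values $[1,g]$ and high values $[g+1,n]$, and let $R = S_{[1,f]} \times S_{[f+1,n]}$ be the analogous subgroup preserving the partition of positions into unrestricted rows $[1,f]$ and restricted rows $[f+1,n]$. For $\pi \in L$ and $\rho \in R$, I would define $\phi_{\pi,\rho}(\sigma) = \pi\sigma\rho^{-1}$. First I would check that $\phi_{\pi,\rho}$ restricts to a bijection on $S_{M_n}$: the membership condition $\sigma([f+1,n]) \subseteq [g+1,n]$ is preserved because $\rho^{-1}([f+1,n]) = [f+1,n]$ and $\pi([g+1,n]) = [g+1,n]$. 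Then I would verify preservation of the walk via the one-line identity $\phi_{\pi,\rho}(\sigma\,(i,j)) = \pi\sigma\rho^{-1} \cdot \rho(i,j)\rho^{-1} = \phi_{\pi,\rho}(\sigma)\,(\rho(i),\rho(j))$, which shows that transposition edges go to transposition edges; combined with the regularity noted above this gives commutation with $P$.

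The one step that requires genuine thought, and is the main obstacle, is transitivity. To each $\sigma \in S_{M_n}$ associate its \emph{pattern} $T(\sigma) := \sigma^{-1}([1,g])$; this is a $g$-element subset of $[1,f]$, and it is the only obstruction to transforming one permutation into another via $G$. Given $\sigma,\tau \in S_{M_n}$, pick $\rho \in R$ (with trivial $S_{[f+1,n]}$-component) satisfying $\rho(T(\sigma)) = T(\tau)$; this is possible because both are $g$-element subsets of $[1,f]$. Then $\sigma\rho^{-1}$ and $\tau$ both send $T(\tau)$ into $[1,g]$ and its complement into $[g+1,n]$, from which a short check shows that $\pi := \tau\rho\sigma^{-1}$ lies in $L$. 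By construction $\phi_{\pi,\rho}(\sigma) = \tau$, completing the proof of vertex transitivity.
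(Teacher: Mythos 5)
Your proof is correct and takes essentially the same approach as the paper: both exhibit graph automorphisms of the form $\sigma \mapsto \pi\sigma\rho^{-1}$ with $\pi$ respecting the value-partition $\{[1,g],[g+1,n]\}$ and $\rho$ respecting the position-partition $\{[1,f],[f+1,n]\}$, check membership and edge-preservation, and then solve for the pair carrying one given vertex to another. The only cosmetic differences are that the paper restricts to permutations fixing $[1,g]$ (resp.\ $[f+1,n]$) pointwise and pins down the value-side map $\alpha$ first, whereas you allow the slightly larger set-stabilizing groups and organize transitivity around the clean invariant $T(\sigma)=\sigma^{-1}([1,g])$, choosing the position-side map $\rho$ first.
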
 
\noindent \textbf{Note:} This paper uses a different convention from Hanlon for permutation multiplication: if $\alpha$ and $\beta$ are permutations, multiplication is treated as function composition. That is, 
\begin{equation*}
(\alpha \beta)(i) = \alpha(\beta(i))
\end{equation*}

\begin{proof}[\bf Proof:] Let $\pi$ and $\sigma$ be elements of 
$S_{M_n}$. A graph isomorphism $\phi: S_{M_n} \rightarrow S_{M_n}$ is found such that
\begin{equation*}
\phi(\pi) = \sigma.
\end{equation*}
Only graph isomorphisms of the form 
\begin{equation}\label{phidef}
\phi(\tau) = \alpha \tau \beta
\end{equation}
are considered, where $\alpha$ is a permutation in $S_n$ that only acts non-trivially on the elements $\{g(n)+1, g(n)+2, \dots, n\}$  and $\beta$ is an elements of $S_n$ that only acts non-trivially on the elements $\{1,2, \dots, f(n)\}$. The proof consists of the following steps. First, it is shown that any $\phi$ as defined in \eqref{phidef} maps $S_{M_n}$ to itself. Secondly, it is shown that any such $\phi$ is an isomorphism. Finally, $\alpha$ and $\beta$ are found such that the $\phi$ defined above satisfies 
\begin{equation*}
\phi(\pi) = \sigma
\end{equation*}
\textbf{Step 1.} Note that visually, $\alpha$ rotates the last $n- g(n)$ columns of the restriction matrix, while $\beta$ rotates the first $f(n)$ rows of the restriction matrix; this formulation makes it clear that $\phi$ must map $S_{M_n}$ to itself. To prove this formally, use the definition 
\begin{equation*}
S_{M_n} = \left\{ \tau \in S_n \left| \right. \tau(i) \geq g(n) + 1 \text{ for all } i \geq f(n)+1\right\}.
\end{equation*}
Thus, it must be shown that for $i \geq f(n) +1$,
\begin{equation*}
\alpha \tau \beta (i) \geq g(n) +1. 
\end{equation*}
But since $\beta$ only acts on $\{1, 2, \dots, f(n)\}$, for $i \geq f(n)+1$, $\beta(i) = i$. Thus, $\alpha\tau\beta(i) = \alpha \tau(i)$. Since $\tau \in S_{M_n}$, and $i \geq f(n)+1$, $\tau(i) \geq g(n)+1$. Since $\alpha$ only acts on $\{g(n)+1, g(n)+2, \dots, n\}$,  $\alpha \tau(i) = \alpha(\tau(i)) \geq  g(n)+1$. Thus, 
\begin{equation*}
\alpha \tau \beta(i) = \alpha \tau (i) \geq g(n)+1
\end{equation*}
as desired. 
\\
\\
\textbf{Step 2.} It now must be shown that any $\phi$ defined by $\eqref{phidef}$ is an isomorphism. By Step 1, $\phi$ maps into $S_{M_n}$, and it's obviously invertible, so it suffices to show that it preserves edges. That is, for any transposition $(i, j)$ and $\tau \in S_{M_n}$, such that $(i, j) \tau \in S_{M_n}$, $\phi\left((i, j) \tau\right)$ is a neighbor of $\phi(\tau)$. But 
\begin{align*}
\phi((i, j) \tau) &= \alpha (i, j) \tau \beta = (\alpha(i), \alpha(j)) \alpha \tau \beta\\
&= (\alpha(i), \alpha(j))\phi(\tau)
\end{align*}
which is clearly a transposition away from $\phi(\tau)$. Thus, the map $\phi$ preserves edges, and hence is an isomorphism of $S_{M_n}$. 
\\
\\
\textbf{Step 3.} Finally, find $\alpha$ and $\beta$ such that 
\begin{equation*}
\phi(\pi)= \sigma
\end{equation*}
This entails $\alpha \pi \beta = \sigma$. Since $\beta$ acts only on $\{1, 2, \dots, f(n)\}$, this means that for $i > f(n)$,
\begin{align*}
\alpha \pi \beta(i) &= \sigma(i) \\
\Rightarrow \alpha \pi (i) &= \sigma (i) 
\end{align*}
Thus, $\alpha$ must satisfy
\begin{equation} \label{alphadef}
\alpha \pi (i) = \sigma(i) \text{ for } i > f(n)
\end{equation}

This defines $\alpha$ on $\pi(S)$, where $S = \{i > f(n)\}$. Since $\pi \in S_{M_n}$, for all $i > f(n)$, $\pi(i) > g(n)$. Thus, $\pi(S) \subset \{g(n)+1, g(n)+2, \dots, n\}$. Furthermore, for $i > f(n), \sigma(i) > g(n)$: therefore, Equation \eqref{alphadef} says that $\alpha$ must send a subset of $\{g(n)+1, g(n)+2, \dots, n\}$ to some other subset of $\{g(n)+1, g(n)+2, \dots, n\}$. Therefore, it is possible to pick a permutation that acts only on $\{g(n)+1, g(n)+2, \dots, n\}$ and satisfies Equation \eqref{alphadef}: call this permutation $\alpha_0$. 

Now, define $\beta_0 = \pi^{-1} \alpha_0^{-1} \sigma$. Clearly, with this definition, 
\begin{equation*}
\phi(\pi)  = \alpha_0 \pi \beta_0 = \sigma
\end{equation*}
To check that this $\beta_0$ acts only on $\{1, 2, \dots, f(n)\}$, it suffices to show this for $\beta_0^{-1}$.  Let $i > f(n)$: 
\begin{align*}
\beta_0^{-1}(i) &= \sigma^{-1} \alpha_0 \pi(i) = \sigma^{-1}(\sigma(i)) = i
\end{align*}
where the second equality follows by Equation \eqref{alphadef}, which was used to define $\alpha_0$. Thus, $\beta_0^{-1}$, and hence $\beta_0$, fixes $\{f(n)+1, f(n)+2, \dots, n\}$, so $\beta_0$ acts only on $\{1, 2, \dots, f(n)\}$. Therefore, for this choice of $\alpha_0$ and $\beta_0$, $\phi(\tau) = \alpha_0 \tau \beta_0$ is an isomorphism of $S_{M_n}$ which maps $\pi$ to $\sigma$, as required.  
\end{proof}
\begin{rmk}\label{examplenonvertextransitive} 
Unfortunately, $S_M$ is not vertex transitive for all one-sided interval restriction matrices. While I have not discovered an easy characterization for vertex transitivity, it is easy to provide counterexamples. For example, let 
\begin{equation*}
M = \begin{bmatrix} 1 & 1 & 1 & 1 & 1 \\
                    1 & 1 & 1 & 1 & 1 \\
                    1 & 1 & 1 & 1 & 1 \\
                    0 & 1 & 1 & 1 & 1 \\
                    0 & 0 & 1 & 1 & 1 \end{bmatrix}            
\end{equation*}
Both $\sigma = 12345$ and $\tau = 45123$ are in $S_M$. Furthermore, a simple computer calculation shows that 
\begin{equation*}
P^6(\sigma, \sigma) = \frac{5207}{117649} \neq  
P^6(\tau, \tau) = \frac{5287}{117649}
\end{equation*}
implying that the walk is not vertex transitive. I conjecture that $S_M$ is not vertex transitive for almost all one-sided restriction matrices that are not `two-step.'
\end{rmk}

Return to proving Theorem \ref{TVfastmixingexample}, following the outline at the beginning of this section. 

\begin{lemma}\label{Tconditioning}
Let $T$ be the first time the first column is used in the walk. Furthermore, fix $\epsilon$ and assume that $i$ is chosen such that at time $i-1$, the random tranposition walk on $S_{n-1}$ is within $\epsilon$ in total variation distance from uniformity. Then, if $\pi$ is the uniform distribution on $S_{M_n}$ for $t \geq i$,
\begin{equation*}
\frac{1}{2} \sum_{x \in S_{M_n}} \left| \mathbb{P}\left( X_t = x \left| \right. T = i\right) - \pi(x) \right|  < \frac{4}{2f(n)-1}+\epsilon.
\end{equation*}
Thus, the conditional distribution of $X_t$ given $T = i$ is very close in total variation to the uniform distribution. 
\end{lemma}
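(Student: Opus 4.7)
The plan is to decompose the conditional law of $X_i$ under $\{T=i\}$ as a mixture over the possible $i$-th steps and compare it to the obvious decomposition of $\pi$ according to the position of value $1$. Monotonicity of total variation distance under Markov transitions then handles $t > i$.

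First, I would identify the law of $X_{i-1}$ conditional on $\{T=i\}$. On $\{T > k\}$ the value $1$ remains at position $1$, and at any such state the allowed ordered pairs split into (a) the $(n-1)^2$ pairs in $\{2,\dots,n\}^2$, all of which are allowed transpositions since no value equal to $1$ is moved, and (b) the $2f(n)-1$ ``column $1$'' pairs, namely the self-loop $(1,1)$ together with $(1,b)$ and $(b,1)$ for $b \in \{2,\dots,f(n)\}$. Conditioning on the next accepted pair not being in (b) thus yields a uniform draw from $\{2,\dots,n\}^2$, which is exactly the transition rule of the random transposition walk on $S_{n-1}$ acting on positions $\{2,\dots,n\}$. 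By the hypothesis, after $i-1$ such steps this walk is within $\epsilon$ in TV of uniform on $S_{n-1}$, so $X_{i-1}\mid\{T>i-1\}$ is within $\epsilon$ of uniform on $\{\tau\in S_{M_n}:\tau(1)=1\}$. Since $P(T=i\mid X_{i-1}=x,T>i-1) = (2f(n)-1)/(n+2\Delta)$ is the same for every such $x$, further conditioning on $\{T=i\}$ does not change this law.

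Second, I would describe the $i$-th step. Conditional on $\{T=i\}$, the accepted pair is uniform on the $2f(n)-1$ column-$1$ pairs above. Hence with probability $1/(2f(n)-1)$ the step is $(1,1)$ and $X_i=X_{i-1}$, while with probability $2(f(n)-1)/(2f(n)-1)$ it swaps positions $1$ and $J$ for $J$ uniform on $\{2,\dots,f(n)\}$. Third, I would compare to $\pi$ via a mixture decomposition. Write $\pi = f(n)^{-1}\sum_{k=1}^{f(n)} \pi_k$ where $\pi_k$ is uniform on $\{\tau \in S_{M_n}:\tau(k)=1\}$; for each $k \in \{2,\dots,f(n)\}$, swapping positions $1$ and $k$ is a measure-preserving bijection from $\pi_1$ to $\pi_k$, so it carries the $\epsilon$-approximation of $\pi_1$ at step $i-1$ to an $\epsilon$-approximation of $\pi_k$. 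Thus the law $\nu_i$ of $X_i\mid\{T=i\}$ is within $\epsilon$ in TV of the mixture
\[
\tfrac{1}{2f(n)-1}\,\pi_1 \;+\; \tfrac{2}{2f(n)-1}\sum_{k=2}^{f(n)} \pi_k.
\]
Because the $\pi_k$ have disjoint supports, the TV distance between two such mixtures is half the $\ell^1$ distance of their weight vectors; comparing with the uniform weights $1/f(n)$ gives a bound of order $1/f(n)$, hence comfortably below $4/(2f(n)-1)$.

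Finally, for $t > i$ the chain runs without further conditioning (the event $\{T=i\}$ is determined by steps $\le i$), so $\nu_t = P^{t-i}\nu_i$, and since $P$ preserves $\pi$ we get $\|\nu_t-\pi\|_{TV} \le \|\nu_i-\pi\|_{TV}$ for all $t\ge i$. The main technical obstacle is really the first step: one must justify carefully that conditioning on $\{T>k\}$ turns the sequence of accepted pairs into the \emph{standard} random transposition walk on $S_{n-1}$, which requires both a correct enumeration of allowed pairs at any state with $\sigma(1)=1$ and the observation that $P(T=i\mid X_{i-1}=x)$ does not depend on $x$, so that conditioning on $\{T=i\}$ rather than $\{T>i-1\}$ is harmless.
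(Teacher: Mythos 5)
Your proof is correct and reaches the same conclusion, but via a cleaner route than the paper's. Where the paper fixes an arbitrary subset $B \subseteq S_{M_n}$, decomposes it as $B = \bigcup_k B_k$ according to the position of value $1$, and chases inequalities separating the $k=1$ and $k\ne 1$ cases (arriving at $\frac{4}{2f(n)-1}$ via bounds like $\pi(B_1) < \frac{2}{2f(n)-1}$ and $\frac{1}{2f(n)-1}\bigl(1+\sum_{k\ge2}\pi(B_k)\bigr) \le \frac{2}{2f(n)-1}$), you instead write both $\nu_i$ and $\pi$ as mixtures of the conditional laws $\pi_k$, exploit that these have \emph{disjoint} supports, and observe that the TV distance between two mixtures of disjointly supported measures is exactly half the $\ell^1$ distance between their weight vectors. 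Your computation gives $\frac{f(n)-1}{(2f(n)-1)f(n)} < \frac{1}{2f(n)-1}$, which is a strictly sharper constant than the paper's $\frac{4}{2f(n)-1}$; combined with the triangle inequality for the $\epsilon$-approximation of each $\pi_k$, the stated bound follows with room to spare. You also correctly flag and justify the step the paper treats as self-evident: that conditioning on $\{T > i-1\}$ makes the accepted pairs i.i.d.\ uniform on $\{2,\dots,n\}^2$ (which requires checking that \emph{every} pair in $\{2,\dots,n\}^2$ is allowed at every state with $\sigma(1)=1$, so that $(n+2\Delta) - (2f(n)-1) = (n-1)^2$), and that $\mathbb{P}(T=i \mid X_{i-1}=x, T>i-1)$ is constant in $x$ so further conditioning on $\{T=i\}$ leaves the law of $X_{i-1}$ unchanged. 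The only cosmetic slip is writing $\nu_t = P^{t-i}\nu_i$ where $\nu_i P^{t-i}$ is meant; the contraction argument is otherwise fine.
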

\begin{proof}[\bf Proof:]
From Lemma \ref{vertextrans}, the walk is vertex transitive. Without loss of generality, the walk starts at the identity. Now, at times $t\geq i$, the walk simply evolves as usual, since the event conditioned on happened already. Since the total variation distance of a Markov chain to stationarity is non-decreasing, for $t \geq i$,  
\begin{align*}
\frac{1}{2} \sum_{x \in S_{M_n}} \left| \mathbb{P}\left( X_t = x \left| \right. T = i\right) - \pi(x) \right| &\leq \frac{1}{2} \sum_{x \in S_{M_n}} \left| \mathbb{P}\left( X_i = x \left| \right. T = i\right) - \pi(x) \right|
\end{align*} 
Clearly, the expression on the right-hand side above is the just total variation distance between $X_i$ conditioned on $T = i$ and stationarity. Thus, it is equal to 
\begin{equation*}\label{TVdistanceconditioned}
\sup_{B \subseteq S_{M_n}} \left| \mathbb{P} \left( X_i \in B \left| \right. T = i\right) - \pi(B) \right|
\end{equation*}
and that is precisely what will be bounded. Accordingly, fix a subset $B$ of $S_{M_n}$. 

By definition, until time $i$ the walk is precisely the random transposition walk on the permutations of $\{2, 3, \cdots, n\}$, and hence it is identical to the random transposition walk on $S_{n-1}$. Recall that $i$ is chosen such that this walk is within $\epsilon$ of stationarity at time $i-1$. Now, let 
\begin{equation}\label{Sdef}
S = \{ \sigma \in S_{M_n} \left| \right. \sigma(1) = 1 \} \cong S_{n-1}
\end{equation}
and let $\tilde{\pi}$ is the uniform distribution on $S$. For any $B \subseteq S$, 
\begin{equation*}
\left| \mathbb{P}(X_{i-1} \in B\left. \right| T = i) - \tilde{\pi}(B) \right| < \epsilon.
\end{equation*}

Now consider the transposition $r_i$ which takes $X_{i-1}$ to $X_i$. Conditioning on $T= i$, this transposition is uniformly distributed between all the transpositions involving the first row (as the walk starts at the identity, this is equivalent to using the first column.) Note that among these, the transposition $(1,1)$ appears once, while the remaining transpositions $(1, k)$ appear twice. This means that there are precisely $2f(n)-1$ possible choices for $r_i$. Now, $r_i$ determines the value of $X_i$ on column $1$. Accordingly, write $B= B_1 \cup B_2 \cup \cdots \cup B_{f(n)}$, where 
\begin{equation*}
B_k = \{\sigma \in B \left. \right| \sigma(k) = 1\}
\end{equation*}
This decomposes $B$ into equivalence classes that depend on the value of $\sigma \in B$ on column $1$. Since $X_{i-1}(1) = 1$, if $r_i = (1,j)$, then $X_i(j) = 1$. Therefore, $X_i$ is in $B_k$ if and only if $r_i$ is $(1,k)$ -- that is, if at time $i$ rows $1$ and $k$ are transposed. Hence, 
\begin{align*}
\mathbb{P}(X_i \in B_k \left. \right| T=i) 
&= \mathbb{P}(X_{i-1}(1, k) \in B_k\left. \right| T=i) \mathbb{P}(r_i = (1,k))\\
&= \mathbb{P}(X_{i-1} \in  B_k(1,k)\left. \right| T=i ) \mathbb{P}(r_i = (1,k))
\end{align*}
As noted earlier, there are precisely $2f(n) - 1$ possibilities for $r_i$, and the cases $k=1$ and $k \neq 1$ are slightly different.  Consider those separately. For $k=1$, 
\begin{align}\label{kisone}
\mathbb{P}(X_i \in B_1\left. \right| T=i) &= \mathbb{P}(X_{i -1} \in B_1 \left. \right| T=i ) \mathbb{P}(r_i = (1,1))\nonumber \\
&= \frac{\mathbb{P}(X_{i -1} \in B_1 \left. \right| T=i)}{2f(n) - 1} \leq \frac{1}{2f(n)-1}
\end{align}
and for $k \neq 1$,  
\begin{align}\label{kisn'tone}
\mathbb{P}(X_i \in B_k\left. \right| T=i ) &=\mathbb{P}(X_{i -1} \in B_k(1,k) \left. \right| T=i ) \mathbb{P}(r_i = (1,k)) \nonumber \\
&= \frac{2 \mathbb{P}(X_{i -1} \in B_k(1,k) \left. \right| T=i)}{2f(n) - 1} 
\end{align}
By choice of $i$, $\mathbb{P}(X_{i-1} \in B_k(1,k) \left. \right| T = i)$ is well-approximated by $\tilde{\pi}(B_k(1,k))$, where $\tilde{\pi}$ is the uniform distribution on $S$ as defined above in Equation \eqref{Sdef}. It is easy to see that $\tilde{\pi}(B_k(1,k)) = f(n) \pi(B_k(1,k)) = f(n)\pi(B_k)$, and thus that 
\begin{equation*}
\left| \mathbb{P}(X_{i-1} \in B_k(1,k) \left. \right| T = i) - f(n) \pi(B_k) \right| < \epsilon
\end{equation*}
for each $k$. 
Therefore, for $k \neq 1$, using Equation \eqref{kisn'tone}, 
\begin{align*}
| \mathbb{P}(X_i \in  B_k\left. \right| T = i) & - \pi(B_k) | = \left| \frac{2 \mathbb{P}(X_{i -1} \in B_k(1,k)\left. \right| T = i) }{2f(n) - 1}  - \pi(B_k) \right|\\
&\leq \frac{2\left| \mathbb{P}(X_{i -1} \in B_k(1,k)\left. \right| T = i)  - f(n) \pi(B_k) \right| + \pi(B_k)}{2f(n)-1}\\
&\leq \frac{2\epsilon +\pi(B_k)}{2f(n)-1}
\end{align*}
Combining the above with Equation \eqref{kisone}, 
\begin{align}\label{TVBbound}
\left| \mathbb{P}(X_{i} \in B \left. \right| T = i) - \pi(B)\right| &= \left| \sum_{k=1}^{f(n)} \left( \mathbb{P}(X_i \in B_k\left. \right| T = i) - \pi(B_k) \right) \right| \nonumber \\
&\leq  \mathbb{P}(X_i \in B_1\left. \right| T = i)  + \pi(B_1) +
\sum_{k=2}^{f(n)} \left( \frac{2\epsilon+ \pi(B_k)}{2f(n)-1} \right) \nonumber \\
&\leq \pi(B_1) +\epsilon +\frac{1}{2f(n)-1}\left(1 +\sum_{k=2}^{f(n)} \pi(B_k)\right)
\end{align}
Now, since $B_1 \subseteq  S$, and $\pi(S)$ is clearly $\frac{1}{f(n)}$,  $\pi(B_1) <  \frac{2}{2f(n)-1}$. Also, 
\begin{equation*}
\sum_{k=2}^{f(n)} \pi(B_k) \leq \pi(B) \leq 1
\end{equation*}
Plugging these back into Equation \eqref{TVBbound},   
\begin{equation*}
\left| \mathbb{P}(X_{i} \in B \left. \right| T = i) - \pi(B)\right| \leq \frac{4}{2f(n)-1}+\epsilon
\end{equation*}
and hence 
\begin{equation*}
\sup_{B \subseteq S_{M_n}} \left| \mathbb{P} \left( X_i \in B \left| \right. T = i\right) - \pi(B) \right| < \frac{4}{2f(n)-1}+\epsilon
\end{equation*}
as required. 
\end{proof} 

Turn now to the proof of Theorem \ref{TVfastmixingexample}. As above, condition on the first time the first column is transposed. 

\begin{proof}[\bf Proof of Theorem \ref{TVfastmixingexample}]  
Without loss of generality, assume that the walk starts at the identity. Define 
\begin{equation}\label{timet}
t = \frac{3(n+2\Delta)}{2f(n)-1}.
\end{equation}
It must be shown that the walk is sufficiently mixed by time $t$. 

Let $T$ is the first time that column $1$ is used in a transposition. Then, 
\begin{align}\label{decompositionbyT}
\left\| P^t(id, \cdot) - \pi \right\|_{TV} &= \frac{1}{2} \sum_{x} \left| P^t(id, x) - \pi(x) \right| \nonumber \\
&= \frac{1}{2}\sum_{x} \left| \sum_{i = 1}^\infty \mathbb{P}(X_t = x \left. \right| T = i)\mathbb{P}(T=i) - \pi(x) \right| \nonumber \\
&= \frac{1}{2}\sum_{x} \left| \sum_{i = 1}^\infty \mathbb{P}(X_t = x \left. \right| T = i)\mathbb{P}(T=i) - \sum_{i=1}^\infty \pi(x)\mathbb{P}(T= i) \right|\nonumber \\
&\leq \sum_{i=1}^\infty \mathbb{P}(T=i) \frac{1}{2}\sum_{x} \left|\mathbb{P}(X_t = x \left. \right| T = i) - \pi(x) \right| 
\end{align}

Now, from the results of Diaconis and Shahshahani, the random transposition walk on $S_{n-1}$ has cutoff at time $\frac{1}{2}(n-1) \log (n-1)$ with a window of size $n-1$. Fix $\epsilon > 0$. If $i\geq n \log n$, then for sufficiently large $n$, the total variation distance between the random transposition walk on $S_{n-1}$ at time $i$ and the uniform distribution is less than $\epsilon$. Thus, by Lemma \ref{Tconditioning} above, if $i \geq n\log n$ and $n$ is sufficiently large, then for $ t\geq i$,
\begin{equation}\label{previouslemma}
\frac{1}{2} \sum_{x \in S_{M_n}} \left| \mathbb{P}\left( X_t = x \left| \right. T = i\right) - \pi(x) \right|  < \frac{4}{2f(n)-1}+\epsilon
\end{equation}
Now, using Equation \eqref{decompositionbyT}, since the total variation distance between distributions is always at most $1$, $\left\|P^t(id, \cdot) - \pi \right\|$ is bounded above by
\begin{align}\label{anotherdecomposition}
 \sum_{i=n \log n }^t \mathbb{P}(T=i) \frac{1}{2}\sum_{x} \left|\mathbb{P}(X_t = x \left. \right| T = i) - \pi(x) \right| + \mathbb{P} ( T \notin [n\log n, t]) 
\end{align}
The probability of transposing the first column in one step is precisely $\frac{2f(n) - 1}{n + 2\Delta}$. Thus, 
\begin{align*}
\mathbb{P}(T \leq n \log n) &= 1 - \mathbb{P}(T > n \log n) \\
                            &= 1 - \left(1 - \frac{2f(n) - 1}{n + 2\Delta}\right)^{n \log n}
\end{align*}
Since $f(n) \leq \frac{n}{5\log n}$ for sufficiently large $n$, and since $n + 2\Delta \approx n^2$,  
\begin{equation*}
1 - \left(1 - \frac{2f(n) - 1}{n + 2\Delta}\right)^{n \log n} \leq 1 - e^{-1/2}
\end{equation*}
for sufficiently large $n$. Similarly, the probability that $T > t$ is just 
\begin{equation*}
\left(1 - \frac{2f(n) - 1}{n + 2\Delta}\right)^{t} \leq \exp \left(- \frac{t(2f(n) - 1)}{n+2\Delta}\right) = e^{-3}
\end{equation*} 
and thus $\mathbb{P}\left( T \notin [n\log n, t]\right) \leq 1 - e^{-1/2} + e^{-3} < 0.45$. Next, by Equations \eqref{anotherdecomposition} and \eqref{previouslemma}, for sufficiently large $n$,
\begin{align*}
\left\| P^t(id, \cdot) - \pi \right\|_{TV} &\leq \sum_{i=n \log n}^{t} \mathbb{P}(T=i) \frac{1}{2}\sum_{x} \left|\mathbb{P}(\alpha_t = x \left. \right| T = i) - \pi(x) \right| + 0.45 \\
&\leq \sum_{i=n \log n}^{t} \mathbb{P}(T=i) \left(  \frac{4}{2f(n)-1}+\epsilon\right) + 0.45 \\
&\leq \frac{4}{2f(n)-1}+\epsilon + 0.45 
\end{align*}
Since $\epsilon$ can be chosen to be anything, and $f(n) \rightarrow \infty$, the above is clearly less than $\frac{1}{2}$ for sufficiently large $n$. Thus, the walk has mixed by time $t=  \frac{3(n+2\Delta)}{2f(n)-1}$, which is clearly of order $\frac{n+2\Delta}{f(n)}$. 

Finally, the walk does not have cutoff: starting  at the identity,   
\begin{equation*}
\mathbb{P} \left( X_k(1) = 1 \right) \geq \mathbb{P}(T > k) = \left(1 - \frac{2f(n) - 1}{n + 2\Delta}\right)^k
\end{equation*}
Under the uniform distribution $\pi$, the probability of $S = \{\sigma \in S_{M_n} \left| \right. \sigma(1) = 1\}$ is precisely $\frac{1}{f(n)}$. This means that 
\begin{equation*}
\left\| P^k(id, \cdot) - \pi\right\|_{TV} \geq \left(1 - \frac{2f(n) - 1}{n + 2\Delta}\right)^k - \frac{1}{f(n)}
\end{equation*}
It is easy to see that the above function falls off smoothly as opposed to exhibiting cutoff. For example, at time $2t = \frac{6(n+2\Delta)}{2f(n) - 1}$, it is approximately $e^{-6}$, which does not approach $0$ as $n \rightarrow \infty$. Thus, the walk does not have cutoff, completing the proof. 
\end{proof}

\section{Hanlon's Results and Other Preliminaries} \label{hanlonpreviouswork}

The remainder of this paper focuses on proving Theorem \ref{chitheorem}, which is concerned with chi-squared cutoff. This section contains a review of Hanlon's work and other background material, which shows how to diagonalize the adjacency matrix for $S_{M(\vec{b})}$. The following is Definition 4.2 from his paper \cite{HanlonPaper}: 
\begin{defn}
Let $\vec{b} = (b_1, \dots, b_n)$ be a sequence chosen from $\{1, 2, \dots, n\}$ satisfying $b_1 \leq b_2 \leq \dots \leq b_n$. Call $u$ and $v$ left-equivalent if $b_u = b_v$. Let $L_1, L_2, \dots, L_s$ denote the left-equivalence classes of $\{1, 2, \dots, n\}$, where the ordering is chosen so that whenever $i < j$, the elements of $L_i$ are less than the elements of $L_j$. 

For notational simplicity, define $b_{n+1} = n+1$, and call $u$ and $v$ right-equivalent if there exist $i$ and $i+1$ such that $b_i \leq u, v \leq b_{i+1}$. It is straightforward to check that the number of left-equivalence classes is equal to the number of right-equivalence classes. Let $R_1, \dots, R_s$ denote the right-equivalence classes of $\{1, 2, \dots, n\}$, where the $R_i$ are ordered in the same way as the $L_i$ above. 
\end{defn}

\begin{example}\label{Mofbexample}
Left-equivalence and right-equivalence classes are very easy to visualize via the restriction matrix $M(\vec{b})$. For example, let $\vec{b} = (1, 1, 1, 2, 4)$. Then, the corresponding $M(\vec{b})$ is below:
\begin{equation*}
M(\vec{b}) = \begin{bmatrix} 1 & 1 & 1 & 1 & 1 \\
                    1 & 1 & 1 & 1 & 1 \\
                    1 & 1 & 1 & 1 & 1 \\
                    0 & 1 & 1 & 1 & 1 \\
                    0 & 0 & 0 & 1 & 1 \end{bmatrix}
\end{equation*}

Now, imagine the restriction matrix as an $n \times n$ chessboard, where each square contains either a $0$ or a $1$. Shade in the squares that contain $1$s, and look at the southwest boundary of the shaded area. This has a `step pattern': in the above example, go down 3 steps, right 1 step, down 1 step, right 2 steps, down 1 step, right 2 steps. It is easy to check that the left-equivalence classes correspond to the down stretches of the step pattern, and the right-equivalence classes correspond to the stretches pointing right. 

Using this, for $\vec{b}$ as defined above, the left-equivalence classes are $\{1, 2, 3\}, \{4\}$ and $\{5\}$, while the right-equivalence classes are $\{1\}, \{2,3\}$ and $\{4,5\}$. (This visualization also explains why the numbers of left-equivalence and right-equivalence classes match.)
\end{example}

\begin{rmk}\label{leftandrightclasses}
It should be clear that if $\vec{b}_n(f, g)$ is defined as in Definition \ref{boffg}, then $M_n$ is a `two-step' restriction matrix: that is, the southwest boundary described above will go down for $f(n)$ steps, then will go right for $g(n)$ steps, then will go down for $n - f(n)$ steps, then will go right for $n - g(n)$ steps. Thus, 
\begin{equation}\label{leftoffg}
L_1 = \{1, 2, \dots, f(n)\}, L_2  = \{f(n)+1, \dots, n\}
\end{equation}
and 
\begin{equation}\label{rightoffg}
R_1 = \{1, 2, \dots, g(n)\}, L_2  = \{g(n)+1, \dots, n\}
\end{equation}
For example, if $n = 5$, $f(5) = 3$ and $g(5) = 2$, then $\vec{b} = (1, 1, 1, 3, 3)$ as in Equation \eqref{boffgexample} above. From the $M(\vec{b})$ below it, it's clear that the left-equivalence classes are $\{1, 2, 3\}$ and $\{4,5\}$, while the right-equivalence classes are $\{1, 2\}$ and $\{3,4,5\}$ matching the expressions above. 
\end{rmk}

The following is Definition 4.3 from Hanlon \cite{HanlonPaper}. 

\begin{defn}\label{defbpartition}
A $\vec{b}$-partition $\alpha = (\lambda_1, \mu_1, \lambda_2, \dots, \mu_{s-1}, \lambda_s)$ is a sequence of partitions such that
\begin{enumerate}
\item $\lambda_i \supseteq \mu_i \subseteq \lambda_{i+1}  $ for all $i = 1, 2, \dots, s-1$
\item $\left| \lambda_i\setminus \mu_i \right|= \left| R_i \right|$ for all $i = 1, 2, \dots, s$ (defining $\mu_s = \emptyset$)
\item $\left| \lambda_{i+1} \setminus \mu_i \right| = \left| L_{i+1} \right|$ for all $i = 0, 1, \dots, s-1$ (defining $\mu_0 = \emptyset$)
\end{enumerate}
\end{defn}

It is helpful to have some standard terminology (a good reference for this is Stanley \cite{StanleyVol1}). Recall that a \textit{Ferrers diagram} is a finite collection of boxes, arranged in left-justified rows, such that each row has at least as many boxes as the row directly below it. If $(a_1, a_2, \dots, a_m)$ is a partition, then the Ferrers diagram associated to it has precisely $a_i$ boxes in row $i$. For example, below is the Ferrers diagram of the partition $(4,2,1)$ of $7$:

\begin{picture}(250,60)(20,-4)
`
\linethickness{0.6 pt}
\put(150, 0){\line(0,1){45}}
\put(150, 45){\line(1, 0){60}}
\put(165, 0){\line(0,1){45}}
\put(180, 15){\line(0,1){30}}
\put(195, 30){\line(0,1){15}}
\put(210, 30){\line(0,1){15}}
\put(150, 30){\line(1, 0){60}}
\put(150, 15){\line(1,0){30}}
\put(150, 0){\line(1, 0){15}}
\end{picture}

The \textit{transpose} $\lambda^{T}$ of a partition $\lambda$ is a partition whose Ferrers diagram is a reflection of the Ferrers diagram of $\lambda$ along the main diagonal $y = -x$. For example, for $\lambda = (4,2,1)$, the tranpose partition $\lambda^T$ is $(3,2,1,1)$.

Ferrers diagrams give a good way of visualizing $\vec{b}$-partitions. If the left-equivalence classes are the sets $L_1, L_2, \dots, L_s$ and the right-equivalence classes are the sets $R_1, R_2, \dots, R_s$, then to get a $\vec{b}$-partition start with $\mu_0 = \emptyset$, add $|L_1|$ squares to get to $\lambda_1$, delete $|R_1|$ squares to get $\mu_1$, add $|L_2|$ squares to get $\lambda_2$, etc. For example, if $\vec{b} = (1, 1, 1, 3, 3)$, with left-equivalence classes $\{1, 2, 3\}$ and $\{4,5\}$, and right-equivalence classes $\{1, 2\}$ and $\{3,4,5\}$, then the following is a $\vec{b}$-partition: 

\begin{picture}(200,50)(-30,-5)
\put(-30, 20){$\mu_0 = \emptyset$,}

\put(20, 20){$\lambda_1 = $} 
\linethickness{0.6 pt}
\put(45, 15){\line(1,0){45}}
\put(45, 30){\line(1,0){45}}

\put(45, 15){\line(0, 1){15}}
\put(60, 15){\line(0,1){15}}
\put(75, 15){\line(0,1){15}}
\put(90, 15){\line(0,1){15}}
\put(91, 15){,}

\put(115, 20){$\mu_1 = $} 
\linethickness{0.6 pt}
\put(140, 15){\line(1,0){15}}
\put(140, 30){\line(1,0){15}}

\put(140, 15){\line(0, 1){15}}
\put(155, 15){\line(0,1){15}}
\put(156, 15){,}

\put(180, 20){$\lambda_2 = $} 
\linethickness{0.6 pt}
\put(205, 0){\line(1,0){15}}
\put(205, 15){\line(1,0){30}}
\put(205, 30){\line(1,0){30}}

\put(205, 0){\line(0, 1){30}}
\put(220, 0){\line(0,1){30}}
\put(235, 15){\line(0,1){15}}
\put(236, 15){,}

\put(260, 20){$\mu_2 = \emptyset$}

\end{picture}

A labeling of a Ferrers diagram is a Ferrers diagram with numbers filled into the boxes. A \textit{standard Young tableau} is a labeling in which the entries of each row are strictly increasing from left to right, while the entries of each column are strictly increasing from top to bottom. A \textit{semi-standard Young tableau} has strictly increasing columns, but weakly increasing (nondecreasing) rows. The following is a semi-standard Young tableau for the partition $(4, 2, 1)$ of $7$: 

\begin{picture}(250,60)(20,-4)
`
\linethickness{0.6 pt}

\put(150, 0){\line(0,1){45}}
\put(150, 45){\line(1, 0){60}}
\put(165, 0){\line(0,1){45}}
\put(180, 15){\line(0,1){30}}
\put(195, 30){\line(0,1){15}}
\put(210, 30){\line(0,1){15}}
\put(150, 30){\line(1, 0){60}}
\put(150, 15){\line(1,0){30}}
\put(150, 0){\line(1, 0){15}}
\put(150, 0){\line(0,1){45}}

\put(155, 35){1}
\put(170, 35){1}
\put(185, 35){2}
\put(200, 35){3}
\put(155, 20){2 }
\put(170, 20){4}
\put(155, 5){3}

\end{picture}

\begin{defn}
Let $\alpha$ and $\beta$ be partitions satisfying $\alpha_i \geq \beta_i$ for all $i$. Let $n = |\alpha| - |\beta|$. Then, the skew-shape $\alpha/\beta$ is well-defined, and there exists a representation of $S_n$ corresponding to $ \alpha/\beta$. Denote the degree of this representation by $\left| X_{\alpha/\beta}\right|$. 
\end{defn}

\begin{rmk}\label{degskewrep}
It is well-known that $\left|X_{ \alpha/\beta}\right|$ is the number of standard Young tableaux of shape  $\alpha/\beta$ (a good source for this and results like it is Macdonald's book \cite{MacdonaldBook}). 
\end{rmk}

\begin{defn}\label{indicatortableau}
Let $\alpha = (\lambda_1, \mu_1, \lambda_2, \dots, \lambda_s)$ be a $\vec{b}$-partition. Define the indicator tableau $T(\alpha)$ to be the tableau whose entry in a square $x$ is the number of skew shapes $\lambda_i/\mu_{i-1}$ that contain $x$. Denote this entry by $T_x(\alpha)$. 
\end{defn}
Now, recall that the \textit{content} of a square $x$ in a Ferrers diagram is denoted by $c_x$, and is defined to be $j - i$, where $x$ lies in column $j$ from the left, and row $i$ from the top. The following theorem restates a main result of Hanlon's paper, Theorem 4.15, which derives all the eigenvalues of the adjacency matrix $U$ using the language indicated above. 

\begin{theorem}\label{hanlonthm}
For every $\vec{b}$-partition $\alpha = (\lambda_1, \mu_1, \lambda_2, \dots, \lambda_s) $, there exists an eigenspace $R_n(\alpha)$ of $U(\vec{b})$ such that the vector space $\mathbbm{C} S_{M(\vec{b})}$ decomposes as a direct sum of the spaces $R_n(\alpha)$; that is,  
\begin{equation*}
\mathbbm{C} S_{M(\vec{b})} = \bigoplus_{\alpha} R_n(\alpha)
\end{equation*} where the sum runs over all $\vec{b}$-partitions $\alpha$. The dimension of $R_n(\alpha)$ is equal to
\begin{equation}\label{dimeig}
\prod_{i=0}^s \left| X_{\lambda_i/\mu_i} \right| \left| X_{\lambda_{i+1}/\mu_{i}}\right|
\end{equation}
where $\mu_0 = \mu_s = \emptyset$. 

Furthermore, letting the eigenvalue of $U$ corresponding to $R_n(\alpha)$ be $\Lambda(\alpha)$, 
\begin{equation}\label{eigvalue}
\Lambda(\alpha)= \sum_{x} T_x(\alpha)c_x
\end{equation}
where the sum is over all the squares $x$ in the indicator tableau $T(\alpha)$.
\end{theorem}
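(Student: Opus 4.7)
The plan is to exploit natural symmetric group actions on $\mathbbm{C} S_{M(\vec{b})}$ that respect the equivalence class structure of $\vec{b}$, decompose the resulting bimodule using the branching rule for skew-shape representations, and compute the eigenvalue of $U(\vec{b})$ on each isotypic piece via the content formula for the sum of transpositions. First I would let $G_L = S_{L_1} \times \cdots \times S_{L_s}$ act on $\mathbbm{C} S_{M(\vec{b})}$ by left multiplication and $G_R = S_{R_1} \times \cdots \times S_{R_s}$ act by right multiplication. These actions preserve $S_{M(\vec{b})}$ because permuting rows within a left-equivalence class, or columns within a right-equivalence class, does not change which entries of $M(\vec{b})$ are nonzero. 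The two actions commute, making $\mathbbm{C} S_{M(\vec{b})}$ a $(G_L, G_R)$-bimodule, and $U(\vec{b})$ commutes with this bimodule structure because the set of allowable transpositions is invariant under conjugation by $G_L \times G_R$.

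Next I would decompose the bimodule by induction on $s$, building up $\vec{b}$ one equivalence class at a time. The key step is that attaching the $(i+1)$st left-equivalence class $L_{i+1}$ corresponds to inducing from $S_{|\mu_i|}$ to $S_{|\lambda_{i+1}|}$, which by Pieri / branching decomposes into skew representations indexed by a choice of skew shape $\lambda_{i+1}/\mu_i$ of size $|L_{i+1}|$; analogously, peeling off $R_{i+1}$ corresponds to restricting and introduces a choice of skew shape $\lambda_{i+1}/\mu_{i+1}$ of size $|R_{i+1}|$. Iterating this, the multiplicities and constraints exactly match Definition \ref{defbpartition}, yielding
\[
\mathbbm{C} S_{M(\vec{b})} \cong \bigoplus_{\alpha} \Bigl(\bigotimes_{i=1}^s X_{\lambda_i/\mu_{i-1}}\Bigr) \otimes \Bigl(\bigotimes_{i=1}^s X_{\lambda_i/\mu_i}\Bigr),
\]
where $\alpha$ runs over $\vec{b}$-partitions with $\mu_0 = \mu_s = \emptyset$; taking dimensions via Remark \ref{degskewrep} recovers Equation \eqref{dimeig}. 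Since $U(\vec{b})$ commutes with $G_L \times G_R$ and each summand is an irreducible bimodule by construction, $U(\vec{b})$ acts as a scalar $\Lambda(\alpha)$ on each $R_n(\alpha)$. To identify this scalar I would decompose $U(\vec{b})$ into contributions from each pair of equivalence classes and apply the classical content formula (see e.g. \cite{DiaconisBook}) stating that the sum of transpositions in $S_k$ acts on the skew representation $X_{\lambda/\mu}$ as multiplication by $\sum_{x \in \lambda/\mu} c_x$. Summing over all the skew shapes comprising $\alpha$ and collecting terms cell by cell produces precisely $\sum_x T_x(\alpha) c_x$, giving Equation \eqref{eigvalue}.

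The main obstacle is the decomposition step. Because $S_{M(\vec{b})}$ is not itself a group, one cannot directly invoke a Peter-Weyl-type argument, so a careful induction is needed in which one verifies at each stage that the space being built remains a bimodule with exactly the claimed multiplicities. The alternating containment $\lambda_i \supseteq \mu_i \subseteq \lambda_{i+1}$ appearing in a $\vec{b}$-partition must be shown to arise naturally from interleaving ``add $|L_i|$ cells'' and ``remove $|R_i|$ cells'' steps along the chain, which amounts to tracking how the Pieri and branching rules stack across the full sequence of equivalence classes. Once this bookkeeping is in place, commutativity of $U(\vec{b})$ with the bimodule action makes the eigenvalue computation a clean Frobenius-style character calculation on each tensor factor.
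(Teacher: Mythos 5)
The paper does not contain a proof of this statement. It is quoted as a restatement of Theorem~4.15 of Hanlon \cite{HanlonPaper}, and the author simply cites that source. So there is no in-paper argument to compare against; your sketch has to stand on its own.

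As written, it has a genuine gap at the Schur's lemma step. Your general setup is reasonable --- the $(G_L,G_R)$-bimodule structure on $\mathbbm{C}S_{M(\vec b)}$, the iterated Pieri/branching steps producing skew shapes, and the dimension bookkeeping in your displayed decomposition do match Equation~\eqref{dimeig}. But the passage from ``$U(\vec b)$ commutes with $G_L\times G_R$'' to ``$U(\vec b)$ acts as a scalar on each $R_n(\alpha)$'' rests on the claim that each $R_n(\alpha)$ ``is an irreducible bimodule by construction,'' and that claim is false. The summand $R_n(\alpha)\cong\bigl(\bigotimes_i X_{\lambda_i/\mu_{i-1}}\bigr)\otimes\bigl(\bigotimes_i X_{\lambda_i/\mu_i}\bigr)$ has interior tensor factors that are honest skew representations, and skew representations of symmetric groups are reducible in general; moreover two distinct $\vec b$-partitions can perfectly well share irreducible $(G_L\times G_R)$-constituents, since different interior $\mu_i$ can yield skew shapes whose Littlewood--Richardson expansions overlap. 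So an operator commuting with $G_L\times G_R$ is in principle free to act non-scalarly on $R_n(\alpha)$ and to mix distinct $R_n(\alpha)$'s within a shared isotype. A related slip occurs in your content computation: the central sum of transpositions in $S_k$ does not act on a reducible skew module $X_{\lambda/\mu}$ by the single scalar $\sum_{x\in\lambda/\mu}c_x$; the content sum for a skew shape instead arises as a difference of Casimir-type elements in the ambient chain of symmetric groups, acting on a $\mu$-isotypic slice of $V_\lambda$. To close the gap you would need a finer commuting family that actually separates the $\vec b$-partitions --- for instance interlacing Gelfand--Zetlin/Jucys--Murphy towers adapted to the alternating $L_i$/$R_i$ filtration, or Hanlon's explicit eigenvector construction --- rather than $G_L\times G_R$ alone.
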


The following lemma finds another expression for the eigenvalue $\Lambda(\alpha)$. It requires the following definition:
\begin{defn}\label{contentdef}
If $\lambda$ is a partition, define $C(\lambda)$ to be the sum of the contents of all the squares of $\lambda$. 
\end{defn}       

\begin{lemma}\label{eigformula}
If $\alpha = (\lambda_1, \mu_1, \dots, \mu_{s-1}, \lambda_s)$ is a $\vec{b}$-partition, 
\begin{align}\label{bettereigformula} 
\Lambda(\alpha) &= C(\lambda_1) - C(\mu_1) + \dots - C(\mu_{s-1}) +C(\lambda_2) \nonumber \\
        &= \sum_{i =1}^s C(\lambda_i) - \sum_{i=1}^{s-1} C(\mu_i)
\end{align}
\end{lemma}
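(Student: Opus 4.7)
The plan is to start from Hanlon's formula $\Lambda(\alpha) = \sum_{x} T_x(\alpha) c_x$ in Equation \eqref{eigvalue} and rewrite the indicator tableau weights $T_x(\alpha)$ as a telescoping combination of indicators of the partitions themselves (rather than the skew shapes), and then swap the order of summation. The point is that $C(\lambda) = \sum_{x \in \lambda} c_x$, so the target formula is essentially what falls out once the sum over cells is reorganized appropriately.

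Concretely, first I would unpack the definition: by Definition \ref{indicatortableau}, $T_x(\alpha)$ counts the skew shapes $\lambda_i/\mu_{i-1}$ (for $i = 1, \dots, s$, with the convention $\mu_0 = \emptyset$) that contain $x$, i.e.
\begin{equation*}
T_x(\alpha) = \sum_{i=1}^s \mathbf{1}\!\left[x \in \lambda_i \setminus \mu_{i-1}\right].
\end{equation*}
Next I would use the key inclusion built into Definition \ref{defbpartition}, namely $\mu_{i-1} \subseteq \lambda_i$, which lets me rewrite each indicator as a difference:
\begin{equation*}
\mathbf{1}\!\left[x \in \lambda_i \setminus \mu_{i-1}\right] = \mathbf{1}[x \in \lambda_i] - \mathbf{1}[x \in \mu_{i-1}].
\end{equation*}
(Checking the three cases $x \notin \lambda_i$, $x \in \lambda_i \setminus \mu_{i-1}$, $x \in \mu_{i-1}$ is immediate.) Summing over $i$ and reindexing the second sum with $\mu_0 = \emptyset$ contributing nothing gives
\begin{equation*}
T_x(\alpha) = \sum_{i=1}^{s} \mathbf{1}[x \in \lambda_i] - \sum_{i=1}^{s-1} \mathbf{1}[x \in \mu_i].
\end{equation*}

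Finally I would substitute this back into Hanlon's formula and interchange the two summations:
\begin{align*}
\Lambda(\alpha) = \sum_x T_x(\alpha) c_x
&= \sum_{i=1}^{s} \sum_{x \in \lambda_i} c_x - \sum_{i=1}^{s-1} \sum_{x \in \mu_i} c_x \\
&= \sum_{i=1}^{s} C(\lambda_i) - \sum_{i=1}^{s-1} C(\mu_i),
\end{align*}
using Definition \ref{contentdef} in the last step. This is exactly the claimed identity \eqref{bettereigformula}. There is no serious obstacle here: the only subtlety is spotting that the nested skew-shape structure lets one collapse $T_x(\alpha)$ to a simple alternating sum of indicators, and this is really just bookkeeping once one uses $\mu_{i-1} \subseteq \lambda_i$.
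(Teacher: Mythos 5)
Your argument is correct and is essentially the paper's proof run in the opposite direction: where you expand $T_x(\alpha)$ cell-by-cell into $\sum_i \mathbf{1}[x\in\lambda_i] - \sum_i \mathbf{1}[x\in\mu_{i-1}]$ and then swap sums, the paper starts from $\sum_i C(\lambda_i) - \sum_i C(\mu_i)$, regroups into skew shapes via the same containment $\mu_{i-1}\subseteq\lambda_i$, and swaps sums to recover $\sum_x T_x(\alpha) c_x$. Same decomposition, same key observation, no substantive difference.
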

\begin{proof}[\bf Proof:]
From Equation \eqref{eigvalue}, 
\begin{equation*}
\Lambda(\alpha)= \sum_{x} T_x(\alpha)c_x
\end{equation*}
where the sum is over all the squares $x$ in the indicator tableau $T(\alpha)$. Recall that $T_x(\alpha)$ is the number of skew shapes $\lambda_i/\mu_{i-1}$ that contain $x$. Thus, since $C(\mu_0) = 0$, 
\begin{align*}
\sum_{i =1}^s C(\lambda_i) - \sum_{i=1}^{s-1} C(\mu_i) &= \sum_{i = 1}^s \sum_{x \in \lambda_i}c_x - \sum_{i=0}^{s-1}\sum_{x\in \mu_i} c_x \\
 &= \sum_{i = 1}^s \left(\sum_{x \in \lambda_i}c_x - \sum_{x \in \mu_{i-1}} c_x\right) \\
&= \sum_{i=1}^s \sum_{x \in \lambda_i/\mu_{i-1}} c_x = \sum_{i=1}^s \sum_{x \in T(\beta)} c_x \mathbf{1}_{\{x\in \lambda_i/\mu_{i-1}\}} \\
&= \sum_{x\in T(\beta)} c_x \sum_{i=1}^s  \mathbf{1}_{\{x\in \lambda_i/\mu_{i-1}\}} = \sum_{x \in T(\beta)} c_x T_x(\alpha) \\
&= \Lambda(\alpha)
\end{align*}
as required. 
\end{proof}

To diagonalize the transition matrix $P$, make the following simple definition. 

\begin{defn}
Define 
\begin{equation}\label{eigsP}
\Lambda_1(\alpha) = \frac{n + 2\Lambda(\alpha)}{n+2\Delta}
\end{equation} 
From Equation \eqref{Ptransitionmatrix}, 
\begin{equation*}
P = \frac{1}{n+ 2\Delta}(nI + 2 U),
\end{equation*}
so $\Lambda_1(\alpha)$ are the eigenvalues of $P$. 
\end{defn}

\section{Bounding the Eigenvalues}\label{boundingtheeigenvalues}

Lemma \ref{vertextrans} above shows that the graph on $S_{M(\vec{b})}$ induced by the walk is vertex transitive. Thus, Corollary \ref{alleigsvertextransitive} and Theorem \ref{hanlonthm} combine to show that 
\begin{equation}\label{alleigsequation}
 \left\| P^t(x, \cdot) - \pi \right\|_{2, \pi} = \sqrt{\sum_{\alpha} \dim(R_n(\alpha)) \Lambda_1(\alpha)^{2t}}
\end{equation}
where the sum is over $\vec{b}$-partitions $\alpha$ such that $\Lambda_1(\alpha) \neq 1$. Using the above expression calls for good bounds on the eigenvalues $\Lambda_1(\alpha)$ and the dimensions of the eigennspaces $R_n(\alpha)$. This section concentrates on the eigenvalues. It is first shown that it suffices to consider non-negative eigenvalues $\Lambda_1(\alpha)$ by showing that the eigenvalues come in pairs. Since this is always true, it is proved for a general $\vec{b}$: 

\begin{lemma}
Let $\alpha = (\lambda_1, \mu_1, \lambda_2, \dots, \lambda_s)$ be a $\vec{b}$-partition. Letting
\begin{equation*}
\alpha^T = (\lambda_1^T, \mu_1^T, \lambda_2^T, \dots, \lambda_s^T)
\end{equation*} 
it can be concluded that
\begin{equation}
\dim(R_n(\alpha^T)) = \dim(R_n(\alpha)) \text{ and } \Lambda(\alpha^T) = - \Lambda(\alpha)
\end{equation}
\end{lemma}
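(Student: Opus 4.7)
The plan is to verify the statement in two parts, using the explicit formulas for $\dim(R_n(\alpha))$ and $\Lambda(\alpha)$ stated in Theorem \ref{hanlonthm} and Lemma \ref{eigformula}. A preliminary sanity check is that $\alpha^T$ is indeed a $\vec{b}$-partition: transposition preserves containment of Young diagrams and preserves the number of boxes, so the three conditions in Definition \ref{defbpartition} carry over unchanged. After that, the argument splits cleanly.

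For the dimension equality, I would use the formula
\begin{equation*}
\dim(R_n(\alpha)) = \prod_{i=0}^s |X_{\lambda_i/\mu_i}||X_{\lambda_{i+1}/\mu_i}|
\end{equation*}
together with the standard fact (Remark \ref{degskewrep}) that $|X_{\lambda/\mu}|$ counts standard Young tableaux of skew shape $\lambda/\mu$. Transposition of shapes gives a bijection between standard Young tableaux of shape $\lambda/\mu$ and those of shape $\lambda^T/\mu^T$ (simply reflect the tableau across the main diagonal, observing that strictly increasing rows become strictly increasing columns and vice versa). Hence $|X_{\lambda_i/\mu_i}| = |X_{\lambda_i^T/\mu_i^T}|$ for every $i$, and multiplying these equalities yields $\dim(R_n(\alpha^T)) = \dim(R_n(\alpha))$.

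For the eigenvalue identity, I would appeal to the content formulation from Lemma \ref{eigformula}:
\begin{equation*}
\Lambda(\alpha) = \sum_{i=1}^s C(\lambda_i) - \sum_{i=1}^{s-1} C(\mu_i).
\end{equation*}
The key observation is that transposing a partition sends the box at position $(i,j)$ to position $(j,i)$, so its content $c_x = j - i$ changes sign. Summing over all boxes of $\lambda$ gives $C(\lambda^T) = -C(\lambda)$. Substituting into the formula above produces $\Lambda(\alpha^T) = -\Lambda(\alpha)$.

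I do not anticipate any real obstacle here; the statement is essentially a symmetry observation built on two easy lemmas (tableau transposition and content negation). The only place requiring care is making sure the shape-transpose and skew-transpose behave as claimed, but this is routine. The result will be used in subsequent sections to pair up eigenvalues $\pm\Lambda_1$ and reduce the chi-squared sum in \eqref{alleigsequation} to nonnegative eigenvalues.
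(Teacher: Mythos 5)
Your proof is correct and follows essentially the same route as the paper: apply the dimension formula from Theorem \ref{hanlonthm} and the content formula from Lemma \ref{eigformula}, then use that transposition preserves skew-tableau counts and negates content. The only cosmetic difference is that you justify $|X_{\lambda/\mu}| = |X_{\lambda^T/\mu^T}|$ by the explicit reflection bijection on standard Young tableaux, while the paper cites that $X_{\alpha/\beta}$ and $X_{\alpha^T/\beta^T}$ are conjugate representations of equal degree — these are the same fact.
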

\begin{proof}[\bf Proof:]
From Theorem \ref{hanlonthm}, 
\begin{equation*}
\dim(R_n(\alpha^T)) = \prod_{i=0}^s \left| X_{\lambda_i^T/\mu_i^T} \right| \left| X_{\lambda_{i+1}^T/\mu_{i}^T}\right|
\end{equation*}
It is well-known that for general $\alpha \supseteq \beta$, $X_{\alpha^T/\beta^T}$ and  $X_{\alpha/\beta}$ are conjugate representations, and hence their dimensions are equal. Thus,
\begin{equation*}
\dim(R_n(\alpha^T)) = \prod_{i=0}^s \left| X_{\lambda_i/\mu_i} \right| \left| X_{\lambda_{i+1}/\mu_{i}}\right| = \dim(R_n(\alpha))
\end{equation*}
Furthermore, it is clear from the definition that $C(\alpha^T) = -C(\alpha)$. Lemma \ref{bettereigformula} gives
\begin{align*}
\Lambda(\alpha^T) &= \sum_{i =1}^s C(\lambda_i^T) - \sum_{i=1}^{s-1} C(\mu_i^T) = - \sum_{i =1}^s C(\lambda_i) + \sum_{i=1}^{s-1} C(\mu_i) \\
  &= -\Lambda(\alpha)  
\end{align*}
as required. 
\end{proof}

Now, assume that $\Lambda_1(\alpha) < 0$. Then, 
\begin{align*}
\left| \Lambda_1(\alpha) \right| &= -\frac{n + 2\Lambda(\alpha)}{n+2\Delta} = \frac{-n +  2\Lambda(\alpha^T)}{n+2\Delta} 
                \leq \frac{n + 2\Lambda(\alpha^T)}{n + 2 \Delta} \\
                &= \Lambda_1(\alpha^T)
\end{align*}
Therefore, 
\begin{equation}\label{positiveeigs}
\left\| P^t(x, \cdot) - \pi \right\|_{2, \pi} \leq \left( 2\sum_{1 \neq \Lambda_1(\alpha) \geq 0} \dim(R_n(\alpha)) \Lambda_1(\alpha)^{2t}\right)^{1/2}
\end{equation}
Hence, it will suffice to provide upper bounds for the eigenvalues. The argument follows the approach laid out by Diaconis and Shahshahani in their analysis for $S_n$ \cite{DiaconisandShah}. For their walk, the eigenvalues were functions of partitions $\lambda$, and it turned out that a good bound for these eigenvalues could be derived using only the largest part of $\lambda$. A similar trick is used to bound $\Lambda_1(\alpha)$. 

As noted earlier in Remark \ref{leftandrightclasses}, for $\vec{b} = \vec{b}_n(f, g)$, there are only two left-equivalence and two right-equivalence classes. Furthermore, by Equations \eqref{leftoffg} and \eqref{rightoffg}, $|L_1| = f(n), |L_2| = n - f(n)$, and $|R_1| =  g(n), |R_2| = n-g(n)$. Thus, by Definition \ref{defbpartition}, a $\vec{b}$-partition $\alpha$ can be written as 
\begin{align}\label{boffgpartition}
\alpha = (\lambda_1, \mu_1, \lambda_2), \ \
&\text{ where } |\lambda_1| = f(n), |\mu_1| = f(n) - g(n), |\lambda_2| = n -g(n),\\ 
&\text{ and } \lambda_1 \supseteq \mu_1 \subseteq \lambda_2 \nonumber
\end{align} 
Hence, by Lemma \ref{eigformula}, for $\alpha = (\lambda_1, \mu_1, \lambda_2)$,
\begin{equation}\label{lambdaexpression}
\Lambda(\alpha) = C(\lambda_1) - C(\mu_1) + C(\lambda_2)
\end{equation}
Since by Equation \eqref{eigsP}, $\Lambda_1(\alpha)$ is just $(n+2\Lambda(\alpha))/(n+2\Delta)$, it suffices to find bounds on the above quantity. 

The largest parts of the partitions will be used to bound $\Lambda(\alpha)$. For sequences of partitions $\alpha = (\lambda_1, \mu_1, \lambda_2)$, the bounds will depend on the sequence of largest parts of $\lambda_1$, $\mu_1$ and $\lambda_2$. In order to slightly simplify calculations, instead of working directly with the largest parts of $\alpha$, it is convenient to work with what remains when the largest part has been taken away. The following definitions and lemmas carry out these ideas. 

\begin{defn}
Let $\lambda$ be a partition of $n$. Then, write $\lambda$ as
\begin{equation*}
\lambda = (\lambda^1, \lambda^2, \lambda^3, \dots)
\end{equation*}
where $\lambda^1 \geq \lambda^2 \geq \lambda^3 \dots$; in particular, the largest part of $\lambda$ will be denoted by $\lambda^1$. Futhermore, note that $(\lambda^2, \lambda^3, \dots)$ is a partition of $n - \lambda^1$. Call this partition the remainder of $\lambda$, and denote it by $\lambda^{Re}$. 

Now, let $\alpha$ be a $\vec{b}$-partition, where $\alpha = (\lambda_1, \mu_1, \lambda_2)$. Then, define 
\begin{align*}
\alpha^1 &= \left(\lambda_1^1, \mu_1^1, \lambda_2^1\right) \\
\alpha^{Re} &= \left(\lambda_1^{Re}, \mu_1^{Re}, \lambda_2^{Re}\right)
\end{align*}
analogously to above. Furthermore, define
\begin{equation*}
\left| \alpha^{Re} \right| = \left(f(n) - \lambda_1^1, f(n) - g(n) - \mu_1^1, n -g(n) - \lambda_2^1\right)
\end{equation*}
Note that 
\begin{equation*}
\left| \alpha^{Re} \right| = \left( \left| \lambda_1^{Re}\right| , \left|\mu_1^{Re}\right| , \left| \lambda_2^{Re}\right| \right) 
\end{equation*}
\end{defn}

The eigenvalue $\Lambda(\alpha)$ is bounded by finding a function of $\left|\alpha^{Re} \right| $ which is greater than it. The following lemmas carry this out. 
\begin{lemma} \label{skewpartitionbound}
Let $\lambda$ be a partition of $l$ and $\mu$ be a partition of $m$ such that $\lambda \supseteq \mu$. Assume that 
\begin{align*}
\left| \lambda^{Re} \right| = i \mbox{ and }
\left| \mu^{Re} \right| = j
\end{align*}
Then, for $C(\lambda)$ as defined in Defintion \ref{contentdef}, the following inequality holds:
\begin{equation*}
C(\lambda) - C(\mu) \leq \frac{l^2 - l}{2} - \frac{m^2 - m}{2} - i(l - i +1) + j(m - j+1)
\end{equation*}
Furthermore, if $j \leq m/2$ and $i \leq l/2$ then equality is achieved at $\lambda = (l-i, i)$ and $\mu  = (m-j, j)$.
\end{lemma}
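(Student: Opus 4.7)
The approach will be to decompose each content sum $C(\lambda), C(\mu)$ into the contribution from the top row and the contribution from everything below, reducing the lemma to a bound on $C(\lambda^{Re}) - C(\mu^{Re})$ for the remainders (which inherit $\mu^{Re} \subseteq \lambda^{Re}$ from $\mu \subseteq \lambda$), and then establishing that bound by an induction that peels off one corner cell at a time.

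First I will observe that since $\lambda$ has largest part $l - i$, row $1$ of $\lambda$ contributes $\sum_{c=1}^{l-i}(c-1) = \frac{(l-i)(l-i-1)}{2}$ to $C(\lambda)$. Each cell below row $1$, viewed as a cell of the partition $\lambda^{Re}$ shifted up by one row, has original content equal to its stand-alone content in $\lambda^{Re}$ minus $1$; the cells below thus contribute $C(\lambda^{Re}) - i$ in total. Hence $C(\lambda) = \frac{(l-i)(l-i-1)}{2} + C(\lambda^{Re}) - i$, and the analogous formula holds for $\mu$. Subtracting reduces the problem to the following sub-claim: for any partitions $\tilde{\lambda} \supseteq \tilde{\mu}$ of sizes $I$ and $J$,
\begin{equation*}
C(\tilde{\lambda}) - C(\tilde{\mu}) \leq \frac{I(I-1)}{2} - \frac{J(J-1)}{2}.
\end{equation*}

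The plan for the sub-claim is induction on $I - J$; the base case $I = J$ is trivial since $\tilde{\lambda} = \tilde{\mu}$. For the inductive step I will pick a cell $x^* = (r^*, c^*) \in \tilde{\lambda}/\tilde{\mu}$ maximizing $r^* + c^*$ and argue that it is a removable corner of $\tilde{\lambda}$ lying in the skew shape. Indeed, by the maximality of $r^* + c^*$, neither $(r^* + 1, c^*)$ nor $(r^*, c^* + 1)$ lies in $\tilde{\lambda}/\tilde{\mu}$; and neither can lie in $\tilde{\mu}$ without forcing $x^* \in \tilde{\mu}$ by partition containment, contradicting $x^* \in \tilde{\lambda}/\tilde{\mu}$. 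Hence both positions are outside $\tilde{\lambda}$, so $\tilde{\lambda}' = \tilde{\lambda} \setminus \{x^*\}$ is a partition with $\tilde{\mu} \subseteq \tilde{\lambda}'$ and $|\tilde{\lambda}'| - |\tilde{\mu}| = (I-J) - 1$. The inductive hypothesis, combined with the bound $c^* - r^* \leq \tilde{\lambda}_1 - 1 \leq I - 1$ and the identity $\frac{I(I-1)}{2} = \frac{(I-1)(I-2)}{2} + (I-1)$, then closes the induction.

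The arithmetic identities $\frac{(l-i)(l-i-1)}{2} + \frac{i(i-1)}{2} - i = \frac{l^2 - l}{2} - i(l - i + 1)$ and the analogous one on the $\mu$ side convert the combined inequality into the stated bound. For the equality claim, $\tilde{\lambda} = (I), \tilde{\mu} = (J)$ realises equality in the sub-claim since $C((n)) = \frac{n(n-1)}{2}$; unwinding the decomposition shows this corresponds to $\lambda = (l-i, i)$ and $\mu = (m-j, j)$, which are valid partitions precisely when $i \leq l/2$ and $j \leq m/2$. I expect the main obstacle to be the sub-claim itself: one must remove a corner cell from $\tilde{\lambda}$ while preserving the containment $\tilde{\mu} \subseteq \tilde{\lambda}'$, and the maximization of $r + c$ over the skew shape is precisely the trick that guarantees both properties simultaneously while also keeping the content of the removed cell bounded by $I - 1$.
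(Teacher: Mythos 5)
Your proof is correct, and it takes a genuinely different route from the paper's. Both arguments begin with the same decomposition: the first row of $\lambda$ contributes $\binom{l-i}{2}$, and the cells below row $1$ contribute $C(\lambda^{Re}) - i$ (each content drops by $1$ under the row shift), and similarly for $\mu$. From there the paths diverge. The paper works directly with the skew shape $\lambda/\mu$ and bounds $C(\lambda/\mu)$ by a greedy placement argument: the cells of $\lambda/\mu$ beyond the first row are packed as far up and to the right as the constraints allow, and one argues that the $r$th such cell has content at most $j + r - 2$. Your proof instead reduces the remainder term $C(\lambda^{Re}) - C(\mu^{Re})$ to a clean general fact about any nested pair of partitions, namely $C(\tilde{\lambda}) - C(\tilde{\mu}) \leq \binom{I}{2} - \binom{J}{2}$, and establishes it by induction on $I - J$, peeling off a corner cell $x^*$ of $\tilde{\lambda}$ that maximizes row plus column within the skew shape. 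The observation that such an $x^*$ is automatically a removable corner of $\tilde{\lambda}$ lying outside $\tilde{\mu}$, while its content is at most $\tilde{\lambda}_1 - 1 \leq I - 1$, is exactly what makes the induction close.

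One structural remark worth noting: the general fact you prove as a sub-claim is, in the paper, Corollary \ref{maxskewpartition}, which the paper derives as a \emph{consequence} of Lemma \ref{skewpartitionbound} (using $i \geq j$ and $l - i \geq m - j$). You have reversed the logical dependency, proving the corollary first by corner-removal and deducing the lemma from it. Both routes are sound; your induction is perhaps easier to make fully rigorous than the paper's greedy filling argument, which is stated rather tersely, at the mild cost of needing to establish the sub-claim from scratch rather than obtaining it for free afterward.
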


\begin{proof}[\bf Proof:]

Let $sq(a, b)$ denote the square of the Ferrers diagram in column $a$ and row $b$, with the square in the upper left corner denoted by $sq(1, 1)$. The content of a square $x = sq(a, b)$ is $a-b$. This increases as the column $a$ increases, and decreases as the row $b$ increases. 

Note that $C(\lambda) - C(\mu) = C(\lambda/\mu)$, since $\lambda \supseteq \mu$. Since $\left| \lambda^{Re} \right| = i$ and
$\left| \mu^{Re}\right| = j$, $\lambda_1 = l - i$ and $\mu_1 = m - j$. Thus, the first row of $\lambda/\mu$ contains the squares $sq(m-j+1,1), sq(m-j+2, 1), \dots, sq(l-i, 1)$. Furthermore, $\lambda/\mu$ contains another $i - j$ squares in other rows. 

Now, start with the Ferrers diagram that consists only of the first row squares $sq(m-j+1,1), sq(m-j+2, 1), \dots, sq(l-i, 1)$ and add squares until reaching a skew partition that contains $l - m$ squares. Since $\left| \mu^{Re} \right|  = j$, the first square added must be at most in column $j+1$ and at least in row $2$. By similar logic, the $r$th square added must be at least in column $j + r$ and at least in row $2$. Thus, the content of the $r$th square we add is at most $j+ r -2$, so 
\begin{align*}
C(\lambda/\mu) & = C(\text{squares in first row}) + C(\text{remaining squares}) \\
           &\leq  \left( (m-j) + \dots + (l - i - 1)\right) + \left( (j-1) + \dots + (i-2) \right) \\
            &= {l - i \choose 2} - {m - j \choose 2} + {i - 1 \choose 2} - {j -1 \choose 2}\\
           &= \frac{l^2 - l}{2} - \frac{m^2 - m}{2} - i(l - i +1) + j(m - j+1)
\end{align*}
as required. Furthermore, if $j \leq m/2$ and $i \leq l/2$, then is straightforward to check that $\lambda = (l-i, i)$ and $\mu = (m-j, j)$ are partitions, satisfy $\lambda \subseteq \mu$, and achieve equality. 
\end{proof}
\begin{example}
Let $l = 9$ and $i = 4$, and let $m = 5$ and $j = 2$. Then, $\lambda$ has $5$ squares in the first row, and $\mu$ has $3$. This $\lambda/\mu$ has $2$ squares in the first row, and $2$ more squares somewhere below the first row. Clearly, the first row of $\lambda/\mu$ looks like 

\begin{picture}(300,40)(-75,-4)
`
\linethickness{0.6 pt}
\multiput(50, 5)(15,0){3}{\dashbox{3}(15, 15)}
\multiput(95, 5)(15,0){3}{\line(0,1){15}}
\put(95,5){\line(1,0){30}}
\put(95,20){\line(1,0){30}}
\end{picture}

\noindent where the dotted lines correspond to the squares of $\mu$. Now, the remaining $2$ squares of $\lambda/\mu$ need to placed somewhere. By definition of content, $C(\lambda/\mu)$ is maximized by placing these squares as far up and as far to the right as possible. Clearly the best thing to do is to put all the remaining squares in the second row -- it is easy to see that otherwise, a square can be moved up and to the right, increasing content. Furthermore, since $\mu$ has $2$ squares below the first row, all of these should be put in the second row as well, to make sure that the squares of $\lambda/\mu$ are as far to the right as possible. Thus, the optimal arrangement is 

\begin{picture}(20,50)(-75,0)
`
\linethickness{0.6 pt}
\multiput(50, 20)(15,0){3}{\dashbox{3}(15, 15)}
\multiput(95, 20)(15,0){3}{\line(0,1){15}}
\multiput(50, 5)(15,0){2}{\dashbox{3}(15, 15)}
\put(95,35){\line(1,0){30}}
\put(80,20){\line(1,0){45}}
\put(80,5){\line(1,0){30}}
\put(80,5){\line(1,0){30}}
\put(80,5){\line(0,1){15}}
\put(95,5){\line(0,1){15}}
\put(110,5){\line(0,1){15}}
\end{picture}

\noindent which is precisely the case $\lambda = (5,4)$ and $\mu = (3,2)$. Note that this arrangement would have been impossible if $i$ or $j$ was too large, but the same argument would have still provided an upper bound. 

\end{example}

\begin{cor}\label{maxskewpartition} 
Let $\lambda$ be a partition of $l$ and $\mu$ be a partition of $m$ such that $\lambda \supseteq \mu$. Then, 
\begin{equation*}
C(\lambda) - C(\mu) \leq \frac{l^2 - l}{2} - \frac{m^2 - m}{2}
\end{equation*}
Furthermore, equality is achieved at $\lambda = (l)$ and $\mu = (m)$. 
\end{cor}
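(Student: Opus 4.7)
The plan is to deduce the corollary as the special-case optimum of Lemma \ref{skewpartitionbound}. Given $\lambda\supseteq\mu$ with $|\lambda|=l$ and $|\mu|=m$, I would simply set $i=|\lambda^{Re}|$ and $j=|\mu^{Re}|$ and apply the lemma to obtain
\begin{equation*}
C(\lambda) - C(\mu) \leq \tfrac{l^2-l}{2} - \tfrac{m^2-m}{2} - i(l-i+1) + j(m-j+1).
\end{equation*}
The corollary will then follow immediately once I show that the ``correction term'' $j(m-j+1) - i(l-i+1)$ is always non-positive under the containment $\lambda\supseteq\mu$.

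Next I would extract the two inequalities that containment gives on the remainders. From $\lambda^k\geq\mu^k$ for every row $k$, comparing first rows yields $l-i=\lambda^1\geq\mu^1=m-j$, and summing over $k\geq 2$ yields $i=|\lambda^{Re}|\geq|\mu^{Re}|=j$. So with $k:=i-j\geq 0$ and $d:=(l-i)-(m-j)\geq 0$, a direct expansion gives
\begin{equation*}
i(l-i+1) - j(m-j+1) = (j+k)(m-j+d+1) - j(m-j+1) = jd + k(m-j+d+1),
\end{equation*}
which is non-negative since $m-j+d+1=\lambda^1+1\geq 1$. This is the main (and essentially only) step; it is not deep but requires keeping careful track of how $\lambda\supseteq\mu$ controls both $\lambda^1\geq\mu^1$ and $|\lambda^{Re}|\geq|\mu^{Re}|$ simultaneously.

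Finally, for the equality claim at $\lambda=(l)$, $\mu=(m)$, I would note that this case corresponds to $i=j=0$, so the correction term vanishes, and the hypotheses $i\leq l/2$, $j\leq m/2$ of the equality statement of Lemma \ref{skewpartitionbound} are trivially met with $\lambda=(l-i,i)=(l)$ and $\mu=(m-j,j)=(m)$. Thus equality is attained exactly as stated.
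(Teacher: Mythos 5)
Your proposal is correct and takes essentially the same route as the paper: both apply Lemma \ref{skewpartitionbound}, extract $i\geq j$ and $l-i\geq m-j$ from the containment $\lambda\supseteq\mu$, and conclude by showing the correction term $i(l-i+1)-j(m-j+1)$ is nonnegative. The only difference is cosmetic: the paper concludes this by noting it is a product of two monotonically ordered nonnegative factors, whereas you verify it by an explicit algebraic expansion.
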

\begin{proof}[\bf Proof:]
A proof identical to the one for Lemma \ref{skewpartitionbound} above works, but instead the lemma itself is used. Assume that $\left| \lambda^{Re} \right| = i$ and $\left| \mu^{Re} \right| = j$. Since $\lambda \supseteq \mu$, it follows that $i \geq j$ and $\lambda^1 \geq \mu^1$, so $l - i \geq m - j.$ Combining, $i(l-i+1) \geq j(m-j+1)$. Thus, 
\begin{align*}
C(\lambda) - C(\mu) &\leq \frac{l^2 - l}{2} - \frac{m^2 - m}{2} - i(l - i +1) + j(m - j+1)\\
                    &\leq \frac{l^2 - l}{2} - \frac{m^2 - m}{2}
\end{align*}
as required. It is easy to check that $\lambda = (l)$ and $\mu = (m)$ achieve equality.  
\end{proof}

The following result appears in Diaconis (Lemma 2, Chapter 3) \cite{DiaconisBook}. A proof is given.

\begin{lemma}\label{partitionbound}
Let $\lambda$ be a partition of $l$, and let $C(\lambda)$ be the sum of the contents of the squares in $\lambda$, as defined above. Let $\left| \lambda^{Re} \right| = i$. Then, 
\begin{equation*}
C(\lambda) \leq \frac{l^2 - l}{2} - i(l -i +1)                      
\end{equation*} 
for all $i$, with equality achieved for $i \leq \frac{l}{2}$ at $\lambda = (l-i, i)$.

Furthermore, if $i \geq l/2$, 
\begin{equation*}
C(\lambda) \leq \frac{l^2 - l}{2} - \frac{il}{2}
\end{equation*}
\end{lemma}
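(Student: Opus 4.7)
The plan is to prove the two inequalities essentially independently.

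For the first bound, the observation is that it is nothing more than the $\mu = \emptyset$ case of Lemma \ref{skewpartitionbound}: setting $m = 0$ and $j = 0$ there directly yields $C(\lambda) \leq \frac{l^2-l}{2} - i(l-i+1)$, and the equality clause of that lemma (which applies exactly when $i \leq l/2$) identifies the maximizer as $\lambda = (l-i, i)$.

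For the second bound, I would work directly from a convenient closed form for $C(\lambda)$. Summing contents row by row,
\[
C(\lambda) = \sum_b \sum_{a=1}^{\lambda_b}(a - b) = \sum_b \left(\frac{\lambda_b(\lambda_b+1)}{2} - b\lambda_b\right),
\]
so that $2C(\lambda) = \sum_b \lambda_b^2 - 2\sum_b b\lambda_b + l$. At this point two completely trivial estimates finish the job: $\lambda_b \leq \lambda_1 = l - i$ gives $\sum_b \lambda_b^2 \leq (l-i)\sum_b \lambda_b = (l-i)l$, while $b \geq 1$ gives $\sum_b b\lambda_b \geq \sum_b \lambda_b = l$. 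Substituting both yields $2C(\lambda) \leq (l-i)l - 2l + l = l(l-i-1)$, which after dividing by $2$ is exactly the claimed bound $\frac{l^2 - l}{2} - \frac{il}{2}$.

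There is no real obstacle here: once one notices the identity $2C(\lambda) = \sum_b \lambda_b^2 - 2\sum_b b\lambda_b + l$, the two bounds needed to conclude are immediate. In fact, the argument just outlined shows that the second inequality holds for every $i$, not only for $i \geq l/2$; the restriction in the statement simply reflects that the second bound is an improvement on the first precisely in that range.
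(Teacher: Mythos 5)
Your proof is correct and follows essentially the same route as the paper: the first bound is the same specialization of Lemma~\ref{skewpartitionbound} to $\mu = \emptyset$, and for the second bound you use the same row-by-row content formula and apply exactly the two estimates the paper does, namely $\lambda_b \leq \lambda_1 = l-i$ and $b \geq 1$, merely in expanded rather than factored form. One minor bonus: working with the expansion $2C(\lambda) = \sum_b \lambda_b^2 - 2\sum_b b\lambda_b + l$ avoids reproducing a small typo in the paper's factored intermediate formula for $C(\lambda)$, and your closing remark that the second inequality in fact holds for all $i$ (the hypothesis $i \geq l/2$ only marking the range where it improves on the first) is a correct and worthwhile observation.
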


\begin{proof}[\bf Proof:] 

The first inequality follows easily from Lemma \ref{skewpartitionbound}. Let $\mu = \emptyset$. Then, $m = j = 0$, and  
\begin{equation*}
C(\lambda) \leq \frac{l^2 - l}{2} - i(l -i +1) 
\end{equation*}
as required. From the same lemma, if $i \leq l/2$, equality is achieved at $(l-i,i)$. 

To prove the second inequality, note that if $\lambda = (\lambda^1, \lambda^2, \dots ,\lambda^r)$, then 
\begin{equation*}
C(\lambda) = \sum_{s=1}^r \frac{\lambda^s(\lambda^s - s-1)}{2}
\end{equation*}
Clearly, $\lambda^s \leq \lambda^1$ for each $s$. Thus, 
\begin{align*}
C(\lambda) &\leq \sum_{s=1}^r \frac{\lambda^s(\lambda^1 - s -1)}{2}\leq 
                               \sum_{s=1}^r \frac{\lambda^s(\lambda^1 - 1)}{2}\\
           &= \frac{\lambda^1 - 1}{2} \sum_{s=1}^r \lambda^s 
\end{align*}
Now, $\lambda^1 - 1 = l - i-1$, while $\sum \lambda^s$ is the total number of squares in the partition $\lambda$, which is equal to $l$. This gives 
\begin{equation*}
C(\lambda) \leq \frac{(l - i -1)l}{2} = \frac{l^2 - l}{2} - \frac{il}{2}
\end{equation*}
as required. 
\end{proof}

The next two lemmas bound the eigenvalues $\Lambda_1(\alpha)$. The first lemma will be used when $\left|\lambda_2^{Re}\right|$ is sufficiently large, and the second lemma will be used the rest of the time. 

\begin{lemma}\label{kbig}
 Let $\alpha = (\lambda_1, \mu_1, \lambda_2)$ be a $\vec{b}$-partition, where $\vec{b} = \vec{b}_n(f, g)$, such that $\left|\lambda_2^{Re} \right| = k \geq (n- g(n))/5$. Then, for sufficiently large $n$, 
\begin{equation*}
\Lambda_1(\alpha) \leq \frac{9}{10} 
\end{equation*}
\end{lemma}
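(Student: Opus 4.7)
\bigskip
\noindent\textbf{Proof proposal for Lemma \ref{kbig}.}

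The plan is to bound $\Lambda(\alpha)$ from above by a quantity that, when divided by $n+2\Delta$, is at most $9/10$ for large $n$. By Lemma \ref{eigformula}, $\Lambda(\alpha) = C(\lambda_1) - C(\mu_1) + C(\lambda_2)$, so I would attack the three terms separately. For the pair $(\lambda_1,\mu_1)$, since $\lambda_1 \supseteq \mu_1$ with $|\lambda_1|=f(n)$ and $|\mu_1|=f(n)-g(n)$, Corollary \ref{maxskewpartition} gives immediately
\begin{equation*}
C(\lambda_1) - C(\mu_1) \leq \frac{f(n)^2 - f(n)}{2} - \frac{(f(n)-g(n))^2 - (f(n)-g(n))}{2} = f(n)g(n) - \frac{g(n)^2 + g(n)}{2}.
\end{equation*}

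For $C(\lambda_2)$ I would use the hypothesis $k = |\lambda_2^{Re}| \geq (n-g(n))/5$ via Lemma \ref{partitionbound}, splitting into two cases. If $k \geq (n-g(n))/2$, the second bound of the lemma gives $C(\lambda_2) \leq \frac{(n-g(n))^2 - (n-g(n))}{2} - \frac{k(n-g(n))}{2}$, and here $k(n-g(n))/2 \geq (n-g(n))^2/4$. If $(n-g(n))/5 \leq k < (n-g(n))/2$, the first bound gives $C(\lambda_2) \leq \frac{(n-g(n))^2 - (n-g(n))}{2} - k(n-g(n)-k+1)$, and the factor $k(n-g(n)-k+1)$ is at least $\frac{n-g(n)}{5}\cdot \frac{n-g(n)}{2} = \frac{(n-g(n))^2}{10}$ for $n$ large. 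Either way, I obtain
\begin{equation*}
C(\lambda_2) \leq \frac{(n-g(n))^2 - (n-g(n))}{2} - \frac{(n-g(n))^2}{10}.
\end{equation*}

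Combining these bounds, a direct computation gives
\begin{equation*}
n + 2\Lambda(\alpha) \leq 2f(n)g(n) - g(n)^2 + \frac{4(n-g(n))^2}{5}.
\end{equation*}
Recalling that $n + 2\Delta = n^2 - 2ng(n) + f(n)g(n)$, the inequality $\Lambda_1(\alpha) \leq 9/10$ reduces, after clearing the common factor of $10$, to
\begin{equation*}
n^2 - 2ng(n) + 2g(n)^2 - 11f(n)g(n) \geq 0.
\end{equation*}
Under the standing hypothesis $f(n)/n \to 0$ from Theorem \ref{chitheorem} (together with $g(n)\leq f(n)$), each of the terms $2ng(n)$ and $11f(n)g(n)$ is $o(n^2)$, so the left-hand side is $n^2(1-o(1))$ and the inequality holds for sufficiently large $n$. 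The main obstacle is really bookkeeping: checking that the two sub-cases for $k$ both produce the same $(n-g(n))^2/10$ subtraction, and then verifying that the resulting polynomial inequality in $n, f(n), g(n)$ is dominated by the $n^2$ term. Neither step is deep, but the case split is the place where the hypothesis $k \geq (n-g(n))/5$ is actually used.
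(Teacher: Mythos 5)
Your proof is correct and follows essentially the same route as the paper: bound $C(\lambda_1)-C(\mu_1)$ via Corollary \ref{maxskewpartition}, bound $C(\lambda_2)$ via Lemma \ref{partitionbound} using the hypothesis $k\geq(n-g(n))/5$, and compare the resulting upper bound on $n+2\Lambda(\alpha)$ against $n+2\Delta$. Your explicit two-case split when invoking Lemma \ref{partitionbound} (to cover $k<(n-g(n))/2$, where its second inequality is not directly stated) is actually a bit more careful than the paper's one-line citation, and your reduction to the polynomial inequality $n^2 - 2ng(n) + 2g(n)^2 - 11f(n)g(n)\geq 0$ in place of the paper's limit argument is an equivalent way to finish.
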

\begin{proof}[\bf Proof:]
From Equation \eqref{lambdaexpression}, 
\begin{equation*}
\Lambda(\alpha) = C(\lambda_1) - C(\mu_1) + C(\lambda_2)
\end{equation*}
From Equation \eqref{eigsP},
\begin{equation*}
\Lambda_1(\alpha) = \frac{n + 2\Lambda(\alpha)}{n + 2 \Delta}
\end{equation*}
and from Lemma \ref{deltafg}, 
\begin{equation*}
 n + 2 \Delta = n^2 - 2ng(n) +2g(n)f(n)
\end{equation*}
Since $\lambda_2$ is a partition of $n - g(n)$, and $\left|\lambda_2^{Re} \right| = k \geq (n- g(n))/5$, Lemma \ref{partitionbound} gives
\begin{equation*}
C(\lambda_2) \leq \frac{(n-g(n))^2 - (n-g(n))}{2} -\frac{k(n - g(n))}{2} \leq  \frac{2(n - g(n))^2}{5}
\end{equation*}
Further, from Corollary \ref{maxskewpartition},
\begin{align*}
C(\lambda_1) - C(\mu_1) &\leq \frac{f(n)^2-f(n)}{2} - \frac{(f(n) - g(n))^2 - (f(n) - g(n))}{2}
\end{align*} 
Since $f(n)/n$ approaches $0$, $g(n) \leq f(n)$, and  $ n+ 2\Delta = n^2 - 2ng(n) +2g(n)f(n)$,  the above expression for $C(\lambda_1) - C(\mu_1)$ is negligible compared to $n+2\Delta$. Similarly, $n/(n+2\Delta)$ approaches $0$ as $n\rightarrow \infty$,
\begin{align*}
\lim_{n\rightarrow \infty} \Lambda_1(\alpha) &= \lim_{n \rightarrow \infty} \frac{n+ 2C(\lambda_1) - 2C(\mu_1) + 2C(\lambda_2)}{n+2\Delta} \\
                 &= \lim_{n\rightarrow \infty} \frac{4}{5}\left( \frac{(n-g(n))^2}{n^2 - 2n g(n) + 2f(n)g(n)} \right) \\ 
&= \frac{4}{5}
\end{align*}
again using the fact that $f(n)$ and $g(n)$ are negligible compared to $n$. Thus, for sufficiently large $n$, $\Lambda_1(\alpha) \leq 9/10$, completing the proof.  
\end{proof}

\begin{lemma}\label{ksmall}
Let $\alpha = (\lambda_1, \mu_1, \lambda_2)$ be a $\vec{b}$-partition, where $\vec{b} = \vec{b}_n(f, g)$, such that $\left| \beta^{Re} \right| = (i, j, k)$. Then, 
\begin{equation*}
\Lambda_1(\alpha) \leq 1 - s_1(i) + s_2(j) -  s_3(k)
\end{equation*}
where 
\begin{align*}
s_1(i) &= \begin{cases} \frac{2i(f(n)-i+1)}{n+2\Delta} & i < \frac{f(n)}{2}\\  
              \frac{if(n)}{n+2\Delta}  & i \geq  \frac{f(n)}{2}  
\end{cases} \\
s_2(j) &= \frac{2j(f(n) - g(n) - j +1)}{n+2\Delta}\\
s_3(k) &= \frac{2k(n - g(n) - k + 1)}{n+2\Delta}
\end{align*}

\end{lemma}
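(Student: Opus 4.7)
The plan is to upper-bound $\Lambda(\alpha)$ and then convert to $\Lambda_1(\alpha)$ via the relation $\Lambda_1(\alpha) = (n + 2\Lambda(\alpha))/(n+2\Delta)$ from \eqref{eigsP}. Starting from
\[
\Lambda(\alpha) = C(\lambda_1) - C(\mu_1) + C(\lambda_2)
\]
given by Lemma \ref{eigformula}, I would split this expression into two pieces: bound $C(\lambda_1)$ on its own, and bound $C(\lambda_2) - C(\mu_1)$ using the skew-content inequality.

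Since $\lambda_1$ is a partition of $f(n)$ with $\lvert\lambda_1^{Re}\rvert = i$, Lemma \ref{partitionbound} immediately gives
\[
C(\lambda_1) \leq \binom{f(n)}{2} - A(i),
\]
where $A(i) = i(f(n)-i+1)$ for $i < f(n)/2$ and $A(i) = if(n)/2$ for $i \geq f(n)/2$; this is exactly the case split that appears in $s_1(i)$. Because $\vec{b}$-partitions require $\mu_1 \subseteq \lambda_2$, I would then apply Lemma \ref{skewpartitionbound} to $(\lambda,\mu) = (\lambda_2, \mu_1)$, with $l = n - g(n)$, $m = f(n) - g(n)$, $\lvert\lambda^{Re}\rvert = k$, and $\lvert\mu^{Re}\rvert = j$, yielding
\[
C(\lambda_2) - C(\mu_1) \leq \binom{n-g(n)}{2} - \binom{f(n)-g(n)}{2} - k(n-g(n)-k+1) + j(f(n)-g(n)-j+1).
\]
Adding the two inequalities and collapsing the constant terms through the algebraic identity $\binom{f(n)}{2} - \binom{f(n)-g(n)}{2} + \binom{n-g(n)}{2} = \Delta$ (a short expansion against Lemma \ref{deltafg}) gives
\[
\Lambda(\alpha) \leq \Delta - A(i) + j(f(n)-g(n)-j+1) - k(n-g(n)-k+1),
\]
which, once substituted into $\Lambda_1(\alpha) = 1 - 2(\Delta - \Lambda(\alpha))/(n+2\Delta)$, is exactly $1 - s_1(i) + s_2(j) - s_3(k)$.

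The only non-obvious decision is to pair the skew estimate with $(\lambda_2, \mu_1)$ rather than with the superficially more natural $(\lambda_1, \mu_1)$. The latter pairing would force the weaker $-i(f(n)-i+1)$ term to appear in $s_1$ even for $i \geq f(n)/2$, losing the tighter $-if(n)/2$ improvement that the statement claims in that regime. By isolating $\lambda_1$ and invoking the full two-case statement of Lemma \ref{partitionbound} on it directly, while letting the skew bound handle $(\lambda_2, \mu_1)$, the case split on $i$ propagates cleanly into $s_1(i)$, while $s_2(j)$ and $s_3(k)$ emerge without any case split. No step involves heavy computation; the only real check is the $\Delta$ identity, which is routine.
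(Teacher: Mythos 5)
Your proof is correct and follows essentially the same route as the paper: bound $C(\lambda_1)$ via Lemma \ref{partitionbound} (with its two-case split driving the case split in $s_1$), bound $C(\lambda_2) - C(\mu_1)$ via Lemma \ref{skewpartitionbound}, combine using the $\Delta$ identity, and convert to $\Lambda_1$ through $\Lambda_1(\alpha) = 1 - 2(\Delta - \Lambda(\alpha))/(n+2\Delta)$. Your remark about why the skew bound must be applied to $(\lambda_2,\mu_1)$ rather than $(\lambda_1,\mu_1)$ is exactly the right structural observation, and your $\Delta$ identity checks out against the value $n+2\Delta = n^2 - 2ng(n) + 2f(n)g(n)$ used throughout the paper.
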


\begin{proof}[\bf Proof:] From Equation \eqref{lambdaexpression},
\begin{equation*}
\Lambda(\alpha) = C(\lambda_1) - C(\mu_1) + C(\lambda_2)
\end{equation*}
Furthemore, from Lemma \ref{partitionbound},
\begin{equation*}
C(\lambda_1) \leq 
\begin{cases} 
\frac{f(n)^2 - f(n)}{2} - i(f(n) -i +1) & i < \frac{f(n)}{2}   \\
\frac{f(n)^2 - f(n)}{2} - \frac{if(n)}{2} & i \geq \frac{f(n)}{2}
\end{cases}
\end{equation*}
Thus, from the definition of $s_1(i)$, 
\begin{equation*}
C(\lambda_1)  \leq \frac{f(n)^2 - f(n)}{2} - \frac{n+2\Delta}{2}s_1(i) 
\end{equation*}
Using Lemma \ref{skewpartitionbound} and simplifying, 
\begin{align*}
C(\lambda_2) - C(\mu_1) \leq \Delta  -\frac{ f(n)^2 - f(n)}{2}  - \frac{n+2\Delta}{2}s_2(j) + \frac{n+2\Delta}{2}s_3(k)
\end{align*}
using the fact that $2\Delta = n^2 - 2ng(n) + 2f(n)g(n) - n$. 

Thus, combining the two, 
\begin{align*}
\Lambda(\alpha) = C(\lambda_1) - C(\mu_1) + C(\lambda_2) \leq \Delta - \frac{n+2\Delta}{2}\left( s_1(i) - s_2(j) + s_3(k)\right)
\end{align*}
Since $\Lambda_1(\alpha) = \frac{n+2\Lambda(\alpha)}{n+2\Delta}$,
\begin{equation*}
\Lambda_1(\alpha)  \leq 1 - s_1(i) + s_2(j) - s_3(j)
\end{equation*} 
as required. 
\end{proof}

\section{Lead Term Analysis and Chi-Squared Lower Bound}\label{sectchisquaredlower}

The eigenvalue bounds derived above will now be used to provide some lead term analysis and also to prove the lower bound of Theorem \ref{chitheorem}. This will use Lemmas \ref{maxskewpartition} and \ref{partitionbound} -- Lemma \ref{ksmall} could also be used, but the previous lemmas are more hands on. 

Let $\alpha = (\lambda_1, \mu_1, \lambda_2)$ be a $\vec{b}$-partition, where $\vec{b} = \vec{b}_n(f, g)$. Then, 
\begin{equation*}
\Lambda(\alpha) = C(\lambda_1) - C(\mu_1) + C(\lambda_2),
\end{equation*}
Lemmas \ref{maxskewpartition} and \ref{partitionbound} show that $C(\lambda_1) - C(\mu_1)$ is maximized at $\lambda_1 = (f(n))$ and $\mu_1 = (f(n) - g(n))$, whereas $C(\lambda_2)$ is maximized at $(n-g(n))$. Thus, letting 
\begin{equation*}
\alpha_0 = ((f(n)), (f(n) - g(n)), (n-g(n))),
\end{equation*}
$\Lambda(\alpha_0)$ must be the maximal eigenvalue of $U$. Indeed, doing the calculation gives
\begin{align*}
\Lambda(\alpha_0) &= \frac{n^2 - n - 2ng(n) + f(n)g(n)}{2} = \Delta
\end{align*}
which is precisely the expected maximal eigenvalue of an adjacency matrix of a regular graph with degree $\Delta$. Intuitively, the next highest eigenvalues should be $\Lambda(\alpha)$ for $\alpha$ close to $\alpha_0$. If $\alpha = (\lambda_1, \mu_1, \lambda_2) \neq \alpha_0$, clearly either $\lambda_1 \neq (f(n))$ or $\lambda_2 \neq (n- g(n))$. Thus, the obvious candidates for next highest eigenvalue are  
\begin{align*}
&\alpha_1 = ((f(n)-1, 1), (f(n) - g(n)), (n - g(n))) \text{ and } \\
&\alpha_2 = ((f(n)), (f(n) - g(n)), (n - g(n)-1, 1)) 
\end{align*}
Indeed, if $\alpha = (\lambda_1, \mu_1, \lambda_2)$ and $\lambda_1 \neq (f(n))$, then from Lemmas \ref{partitionbound} and \ref{maxskewpartition} it is easy to conclude that
\begin{align*}
\Lambda(\alpha) &= C(\lambda_1) - C(\mu_1) + C(\lambda_2)\\
       &\leq \frac{f(n)^2 - f(n)}{2} - f(n) + (C(\lambda_2) - C(\mu_1))\\
   & \leq C(f(n)) - f(n)  - C(f(n) - g(n)) + C(n - g(n))\\
            &\leq \Lambda(\alpha_1)
\end{align*}
and similarly, if $\lambda_2 \neq (n - g(n))$,  
\begin{align*}
\Lambda(\alpha) &\leq C(f(n))  - C(f(n) - g(n)) + C(n - g(n)) - (n - g(n)) \\
                &\leq \Lambda(\alpha_2)
\end{align*}
The above arguments (or a straighforward calculation) should make it clear that 
\begin{align}\label{nexthighesteigs}
\Lambda(\alpha_1) = \Delta - f(n) \ \text{ and } \
\Lambda(\alpha_2) = \Delta - (n -g(n))
\end{align}
The following steps derive the bounds corresponding to $\alpha_1$ and $\alpha_2$ for the lead-term analysis. 
\\

\noindent \textbf{Bound corresponding to $\alpha_1$:} 

\noindent Here, the term $\dim(R_n(\alpha_1)) \Lambda_1(\alpha_1)^{2t}$ is used, where
\begin{equation}\label{alpha1}
\alpha_1 = ((f(n)-1, 1), (f(n) - g(n)), (n - g(n)))
\end{equation} 
as defined above. 
Now, from Equation \ref{dimeig}, 
\begin{equation*}
\dim(R_n(\alpha_1)) = \left|X_{\lambda_1}\right|\left|X_{\lambda_1/\mu_1}\right|
\left|X_{\lambda_2/\mu_1}\right|
\left|X_{\lambda_2}\right|
\end{equation*}
As noted in Remark \ref{degskewrep}, the degree of a skew representation is just the number of standard Young tableaux of that skew shape. Thus, 
\begin{equation*}
\left| X_{\lambda_1}\right| = \left| X_{(f(n)-1, 1)}\right| = (f(n) -1)
\end{equation*}
since choosing a standard Young tableau for $(f(n)-1, 1)$ just requires picking a number other than $1$ for the second row -- the numbers in the first row have to be ordered, and and picking $1$ to go in the second row would result in a contradiction in the first column. 

Now, $\lambda_1/\mu_1 = (f(n)-1, 1)/(f(n) - g(n))$, which consists of precisely $g(n) -1$ squares starting at $f(n) - g(n)+1$ in the first row, and one square in the second row. Since the square in the second row is not directly below any square in the first row, choosing a standard Young tableau just requires picking any number for the second row. Thus, 
\begin{equation*}
\left| X_{\lambda_1/\mu_1}\right| = g(n)
\end{equation*}
Since $\lambda_2/\mu_1$ and $\lambda_2$ are both just single rows, \begin{equation*}
\left| X_{\lambda_2/\mu_1}\right| = \left| X_{\lambda_2} \right| = 1
\end{equation*}
Combining,  
\begin{equation*}
\dim(R_n(\alpha_1)) = (f(n)-1)g(n).
\end{equation*}
From Equation \eqref{nexthighesteigs},
\begin{equation*}
\Lambda_1(\alpha_1) = \frac{n +2\Lambda(\alpha_1)}{n+2\Delta} = 1 - \frac{2f(n)}{n+2\Delta}
\end{equation*}
and hence 
\begin{align} \label{alpha1bound}
\dim(R_n(\alpha_1)) \Lambda_1(\alpha_1)^{2t} &= (f(n)-1)g(n) \left( 1 - \frac{2f(n)}{n + 2\Delta}\right)^{2t} \\
 & \approx (f(n)-1)g(n) \exp \left( - \frac{4tf(n)}{n + 2\Delta} \right) \nonumber
\end{align}
Thus, making this lead term at most $e^{-c}$ for some constant $c$ requires
\begin{equation*}
\frac{4t f(n)}{n+2\Delta} \geq \log (f(n)-1) + \log g(n) + c 
\end{equation*}
so it clearly suffices to have
\begin{align}\label{tfirstbound}
 t &\geq \frac{(n+2\Delta)(\log f(n) + \log g(n))}{4f(n)} + c\frac{n+2\Delta}{4f(n)}
\end{align} 
Thus, the above bound is the contribution of $\alpha_1$. Turn next to $\alpha_2$. 
\\

\noindent \textbf{Bound corresponding to $\alpha_2$:} 

\noindent The case of 
\begin{equation*}
\alpha_2 = ((f(n)), (f(n) - g(n)), (n - g(n),1))
\end{equation*} 
is entirely analogous. Identical calculations show that 
\begin{equation*}
\dim(R_n(\alpha_2)) = (n - g(n)-1)(n-f(n))
\end{equation*}
and that
\begin{equation*}
\Lambda_1(\alpha_2) = 1 - \frac{2(n - g(n))}{n+2\Delta}
\end{equation*}
Hence, 
\begin{align*}
\dim(R_n(\alpha_2)) \Lambda_2(\alpha_1)^{2t}  \approx (n - g(n)-1)(n-f(n)) \exp \left( - \frac{4t(n  -g(n))}{n + 2\Delta} \right)
\end{align*}
giving the bound
\begin{align}\label{tsecondbound}
 t &\geq \frac{(n+2\Delta)(\log (n-f(n)) + \log (n- g(n)))}{4(n-g(n))} + c\frac{n+2\Delta}{4( n - g(n))}
\end{align} 
To compare the bounds, first prove the following simple lemma: 

\begin{lemma}\label{boundcompare}
If $(x_n)$ and $(y_n)$ are positive sequences such that $\lim_{n\rightarrow \infty} \frac{x_n}{y_n} = 0$, and $\lim_{n\rightarrow \infty} x_n = \infty$, then for an arbitrarily large constant $c$,
\begin{equation*}
\frac{\log x_n}{x_n} \geq c \frac{ \log y_n}{y_n}
\end{equation*}
for sufficiently large $n$. 
\end{lemma}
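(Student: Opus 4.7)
I would reformulate the inequality as a statement about the ratio
\begin{equation*}
Q_n = \frac{\log x_n / x_n}{\log y_n / y_n} = \frac{y_n \log x_n}{x_n \log y_n},
\end{equation*}
and show that $Q_n \to \infty$; once that is done, for any fixed $c$, eventually $Q_n \geq c$, which is exactly the claim. Note first that the hypotheses $x_n \to \infty$ and $x_n / y_n \to 0$ force $y_n \to \infty$ as well, so all logarithms are positive for large $n$.

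The next step is to isolate the explosion. Set $\alpha_n = y_n / x_n$, so that $\alpha_n \to \infty$ by hypothesis, and $\log y_n = \log \alpha_n + \log x_n$. Substituting gives
\begin{equation*}
Q_n = \frac{\alpha_n \log x_n}{\log x_n + \log \alpha_n} \;=\; \frac{\alpha_n}{1 + \log \alpha_n / \log x_n}.
\end{equation*}
The goal is reduced to showing that this quantity tends to infinity even when $\log \alpha_n / \log x_n$ is unbounded.

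I would handle this by a simple two-case split according to which of $\log x_n$ and $\log \alpha_n$ is larger (equivalently, whether $y_n \leq x_n^2$ or $y_n > x_n^2$). In the first case, $\log x_n + \log \alpha_n \leq 2 \log x_n$, so $Q_n \geq \alpha_n / 2 \to \infty$. In the second case, $\log x_n + \log \alpha_n \leq 2 \log \alpha_n$, so
\begin{equation*}
Q_n \;\geq\; \frac{\alpha_n \log x_n}{2 \log \alpha_n} \;=\; \frac{1}{2}\cdot \frac{\alpha_n}{\log \alpha_n} \cdot \log x_n,
\end{equation*}
which is a product of two factors each tending to $+\infty$: $\alpha_n/\log \alpha_n \to \infty$ because $\alpha_n \to \infty$, and $\log x_n \to \infty$ because $x_n \to \infty$. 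Hence $Q_n \to \infty$ in this case as well.

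The only mildly delicate step is the second case, and even there the work is just the classical fact that $t / \log t \to \infty$ as $t \to \infty$; nothing deeper is needed. Combining the two cases gives $Q_n \to \infty$, which yields the desired inequality $\log x_n / x_n \geq c \log y_n / y_n$ for $n$ sufficiently large (depending on $c$).
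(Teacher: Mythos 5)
Your proof is correct, and it takes essentially the same route as the paper: both rest on the decomposition $\log y_n = \log(y_n/x_n) + \log x_n$ together with the classical fact that $t/\log t \to \infty$ as $t\to\infty$ (equivalently, $t\log t \to 0$ as $t\to 0^{+}$). The only cosmetic difference is that the paper avoids your two-case split: it rewrites $\frac{\log y_n}{y_n} = \frac{1}{x_n}\left(-\frac{x_n}{y_n}\log\frac{x_n}{y_n} + \frac{x_n}{y_n}\log x_n\right)$ and bounds each of the two terms in parentheses by $\frac{1}{2c}\log x_n$ directly, using $x_n/y_n \to 0$ and $\log x_n \to \infty$.
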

\begin{proof}[\bf Proof:]
Rewriting, 
\begin{align*}
\frac{\log y_n}{y_n} &= \frac{\log (y_n/x_n)+ \log x_n}{x_n}\cdot\frac{x_n}{y_n} \\
&=\frac{1}{x_n} \left( - \frac{x_n}{y_n} \log \left(\frac{x_n}{y_n}\right) + \frac{x_n}{y_n}\cdot \log x_n \right)
\end{align*}
Since $\frac{x_n}{y_n}$ approaches $0$, and $x_n$ approaches $\infty$,  $ - \frac{x_n}{y_n} \log \left(\frac{x_n}{y_n}\right)$ approaches $0$ and thus is eventually less than $\frac{1}{2c}\log x_n$. Similarly, $\frac{x_n}{y_n} \log x_n$ is eventually less than $\frac{1}{2c} \log x_n$. Combining, for sufficiently large $n$, 
\begin{equation*}
\frac{\log y_n}{y_n} \leq \frac{1}{c} \frac{\log x_n}{x_n}
\end{equation*}
as required. 
\end{proof}

One of the assumptions in Theorem $\ref{chitheorem}$ is that $f(n)/n$ approaches $0$. Since by definition $g(n) \leq f(n)$, the bound in Equation \eqref{tsecondbound} is of order $(n+2\Delta) \log n/n$, whereas the bound in Equation \eqref{tfirstbound} is of order $(n+2\Delta) \log f(n)/f(n)$. The above lemma with $x_n = f(n)$ and $y_n =n$ makes it clear that Equation \eqref{tfirstbound} is the stronger bound. This lead term analysis suggests chi-squared cutoff around 
\begin{equation}
t = \frac{(n+2\Delta)(\log f(n) + \log g(n))}{4f(n)} 
\end{equation}
with a window of order $\frac{n+2\Delta}{4f(n)}$. 

Having derived the second highest eigenvalue, the lower bound part of Theorem \ref{chitheorem} can be proved. If $t$ is defined as 
\begin{equation}\label{tlowerboundrestatement}
t = \frac{(n+2\Delta)(\log f(n) + \log g(n))}{4f(n)} - c \frac{n + 2\Delta}{4f(n)}
\end{equation}
it is shown that
\begin{equation*}
\left\| P^t(x, \cdot) - \pi \right\|_{2, \pi} \geq \frac{1}{2} e^{c/2}
\end{equation*}
for sufficiently large $n$. 

\begin{proof}[\bf Proof of Lower Bound in Theorem \ref{chitheorem}:]
Let $\alpha_1 = ((f(n)-1, 1), (f(n) - g(n)), (n - g(n)))
$ as in Equation \eqref{alpha1}. From Equation \eqref{alleigsequation}, 
\begin{align*}
 \left\| P^t(x, \cdot) - \pi \right\|_{2, \pi} &= \sqrt{\sum_{\alpha} \dim(R_n(\alpha)) \Lambda_1(\alpha)^{2t}} \\
&\geq \sqrt{\dim(R_n(\alpha_1)) \Lambda_1(\alpha_1)^{2t}}
\end{align*}
Furthermore, from Equation \eqref{alpha1bound},  
\begin{equation*}
\dim(R_n(\alpha_1)) \Lambda_1(\alpha_1)^{2t} = (f(n)-1)g(n) \left( 1 - \frac{2f(n)}{n + 2\Delta}\right)^{2t} 
\end{equation*} 
Since $\frac{f(n)}{n} \rightarrow 0$, $\frac{f(n)}{n+2\Delta} \rightarrow 0$ as $n$ approaches $\infty$. Thus, for $t$ as defined in Part 2 of Theorem \ref{chitheorem} and as restated above in Equation \eqref{tlowerboundrestatement},
\begin{align*}
\lim_{n\rightarrow \infty} (f(n)-1)g(n) \left( 1 - \frac{2f(n)}{n+2\Delta}\right)^{2t}& = \lim_{n\rightarrow \infty} (f(n)-1)g(n)\exp\left(-\frac{4tf(n)}{n+2\Delta} \right) \\
&=\lim_{n\rightarrow \infty} (f(n)-1)g(n)e^{-\log f(n) - \log g(n) + c}\\
&= \lim_{n\rightarrow \infty} \frac{f(n) - 1}{f(n)} e^{c} = e^{c}
\end{align*}
since $f(n) \rightarrow \infty$. Combining the equations above,  
\begin{equation*}
\left\| P^t(x, \cdot) - \pi \right\|_{2, \pi} \geq \sqrt{ \dim (R_n(\alpha_1)) \Lambda_1(\alpha_1)^{2t}} \longrightarrow e^{c/2} \text{ as $n \rightarrow \infty$}
\end{equation*}
and therefore, for sufficiently large $n$, the chi-squared distance is at least $\frac{e^{c/2}}{2}$. This proves the lower bound. 
\end{proof}
\begin{rmk}
Note that while the assumptions of Theorem \ref{chitheorem} are used in the above proof to simplify computation, their full strength is not needed. Indeed, a similar lower bound can be derived for almost any functions $f$ and $g$. However, if $n - g(n) \leq f(n)$ then this lower bound will not correspond to the second-highest eigenvalue, as that will be associated to the $\alpha_2$ defined above. 
\end{rmk}

\section{Dimensions of Eigenspaces}\label{dimensionofeigenspacecalc}
This section continues with the proof of the the upper bound in Theorem \ref{chitheorem}. Here is a recap of the bounds in Section \ref{boundingtheeigenvalues}. 

\begin{defn}\label{sijk}
Let $(i, j, k)$ be a triple of integers. Then, define 
\begin{equation}\label{sdef}
s(i, j, k) = 
\begin{cases} \frac{9}{10} & k \geq (n -g(n))/5\\
             1 - s_1(i) + s_2(j) - s_3(k) & \text{otherwise} \end{cases}
\end{equation}
where $s_1, s_2$ and $s_3$ are defined as in Lemma \ref{ksmall} -- that is, 
\begin{align*}
s_1(i) &= \begin{cases} \frac{2i(f(n)-i+1)}{n+2\Delta} & i < \frac{f(n)}{2}\\  
              \frac{if(n)}{n+2\Delta}  & i \geq  \frac{f(n)}{2}  
\end{cases} \\
s_2(j) &= \frac{2j(f(n) - g(n) - j +1)}{n+2\Delta}\\
s_3(k) &= \frac{2k(n - g(n) - k + 1)}{n+2\Delta}
\end{align*} 
\end{defn}
\begin{rmk}
The above $s(i, j, k)$ are chosen to simplify notation. Combining Lemmas \ref{kbig} and \ref{ksmall},  
\begin{equation}
\Lambda_1(\alpha) \leq s\left( \left| \alpha^{Re} \right| \right)
\end{equation}
which will clearly be useful. 
\end{rmk}

Here is a sketch out the rest of the proof. By Equation \eqref{positiveeigs}, the quantity to be bounded is
\begin{equation*}
\sum_{1 \neq \Lambda_1(\alpha) \geq 0} \dim(R_n(\alpha)) \Lambda_1(\alpha)^{2t}
\end{equation*}
Since the upper bound above depends purely on $\left| \alpha^{Re} \right|$,  rearrange the above quantity by the value of $\left| \alpha^{Re} \right|$. From the previous section, it is clear that $\left|\alpha^{Re} \right| = (0,0,0)$ corresponds to $\Lambda_1(\alpha) = 1$. Thus, 
\begin{align}\label{decompositionbyijk}
\sum_{1 \neq \Lambda_1(\alpha) \geq 0} \dim(R_n(\alpha)) \Lambda_1(\alpha)^{2t}   
 &\leq \sum_{\left|\alpha^{Re} \right| \neq (0,0,0)} \dim (R_n(\alpha)) s\left( \left| \alpha^{Re} \right| \right)^{2t} \nonumber \\
 &=  \sum_{(i, j, k) \neq (0, 0, 0)} s(i, j, k)^{2t}\sum_{\left| \alpha^{Re} \right| = (i, j, k)} \dim (R_n(\alpha)) 
\end{align}

The proof below will be organized as follows. The current section finds an expression for the sum
\begin{equation*}
\sum_{\left| \alpha^{Re} \right| = (i, j, k)} \dim (R_n(\alpha)) 
\end{equation*}
in terms of $i, j,$ and $k$. This leaves the (rather unwieldy) sum on the right-hand side of Equation \eqref{decompositionbyijk} in terms of the three indices $i, j,$ and $k$. In Section \ref{chisquaredupperchapter}, this sum is split into a number of pieces that depend on the precise values of the indices, and supporting lemmas are proved for the size of each piece. All the bounds are then combined into a proof of the upper bound part of Theorem \ref{chitheorem}. 

The current section is devoted to proving the following lemma:

\begin{lemma}\label{dimensionsum}
Assume that $i \geq j \leq k$. Then, 
\begin{equation*}
\sum_{\left| \alpha^{Re} \right| = (i, j, k)} \dim (R_n(\alpha)) \leq  {f(n) \choose i}{g(n) \choose  i-j} {n- f(n) \choose k-j}{n - g(n) \choose k} \frac{i!k!}{j!}
\end{equation*} 
\end{lemma}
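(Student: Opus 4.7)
The plan is to bound each of the four factors in $\dim R_n(\alpha)$ separately by isolating the first row of each partition, and then evaluate the resulting sum over the ``remainder'' $\vec b$-partitions using a standard branching identity for skew representations of the symmetric group.

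First I would establish the elementary upper bounds
\begin{equation*}
|X_\lambda| \leq \binom{|\lambda|}{|\lambda^{Re}|}\,|X_{\lambda^{Re}}|, \qquad |X_{\lambda/\mu}| \leq \binom{|\lambda/\mu|}{|\lambda^{Re}/\mu^{Re}|}\,|X_{\lambda^{Re}/\mu^{Re}}|.
\end{equation*}
Both follow from the obvious counting argument: an SYT of $\lambda$ (resp.\ $\lambda/\mu$) is determined by choosing which labels go into rows $2$ and below (a $\binom{\cdot}{\cdot}$ choice) and arranging those labels as an SYT of $\lambda^{Re}$ (resp.\ $\lambda^{Re}/\mu^{Re}$), while the row-$1$ labels then fill the top row in increasing order; boundary constraints between row $1$ and lower rows mean that not every such pair of choices yields a valid SYT, so the inequality is an upper bound. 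Applying this to the four factors of $\dim R_n(\alpha) = |X_{\lambda_1}||X_{\lambda_1/\mu_1}||X_{\lambda_2/\mu_1}||X_{\lambda_2}|$, and using the $\vec b$-partition sizes $|\lambda_1|=f,\ |\lambda_2|=n-g,\ |\lambda_1/\mu_1|=|R_1|=g,\ |\lambda_2/\mu_1|=|L_2|=n-f$ (all four from Definition \ref{defbpartition}), I obtain
\begin{equation*}
\dim R_n(\alpha) \leq \binom{f}{i}\binom{g}{i-j}\binom{n-f}{k-j}\binom{n-g}{k}\,|X_{\lambda_1^{Re}}||X_{\lambda_1^{Re}/\mu_1^{Re}}||X_{\lambda_2^{Re}/\mu_1^{Re}}||X_{\lambda_2^{Re}}|.
\end{equation*}

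Since the binomials depend only on $(i,j,k)$, summing over all $\vec b$-partitions with $|\alpha^{Re}|=(i,j,k)$ reduces the claim to showing that
\begin{equation*}
S(i,j,k) := \sum |X_{\lambda_1^{Re}}||X_{\lambda_1^{Re}/\mu_1^{Re}}||X_{\lambda_2^{Re}/\mu_1^{Re}}||X_{\lambda_2^{Re}}|
\end{equation*}
(where the sum runs over $\lambda_1^{Re}\vdash i,\,\mu_1^{Re}\vdash j,\,\lambda_2^{Re}\vdash k$ with $\mu_1^{Re}\subseteq\lambda_1^{Re}$ and $\mu_1^{Re}\subseteq\lambda_2^{Re}$; the hypothesis $i\geq j\leq k$ ensures the containments are size-compatible) equals $\tfrac{i!\,k!}{j!}$. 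I would factor as
\begin{equation*}
S(i,j,k) = \sum_{\mu_1^{Re}\vdash j}\Bigl(\sum_{\lambda_1^{Re}\vdash i,\,\supseteq\mu_1^{Re}} |X_{\lambda_1^{Re}}||X_{\lambda_1^{Re}/\mu_1^{Re}}|\Bigr)\Bigl(\sum_{\lambda_2^{Re}\vdash k,\,\supseteq\mu_1^{Re}} |X_{\lambda_2^{Re}}||X_{\lambda_2^{Re}/\mu_1^{Re}}|\Bigr)
\end{equation*}
and apply the identity
\begin{equation*}
\sum_{\lambda\vdash n,\,\lambda\supseteq\mu}|X_\lambda||X_{\lambda/\mu}| = \frac{n!}{|\mu|!}|X_\mu|,
\end{equation*}
which comes from combining the Littlewood--Richardson decomposition of the skew Specht module $|X_{\lambda/\mu}| = \sum_\nu c^\lambda_{\mu\nu}|X_\nu|$, Frobenius reciprocity $\sum_\lambda c^\lambda_{\mu\nu}|X_\lambda| = \binom{n}{|\mu|}|X_\mu||X_\nu|$, and the classical RSK identity $\sum_{\nu\vdash r}|X_\nu|^2 = r!$. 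Substituting twice and then applying $\sum_{\mu_1^{Re}\vdash j}|X_{\mu_1^{Re}}|^2 = j!$ collapses the sum to $\tfrac{i!\,k!}{(j!)^2}\cdot j! = \tfrac{i!\,k!}{j!}$, which completes the proof.

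The main obstacle is bookkeeping rather than technical difficulty: one must compute the skew sizes $|\lambda_1/\mu_1|=g$ and $|\lambda_2/\mu_1|=n-f$ from the $\vec b$-partition axioms (not from naive subtraction of partition sizes involving $j$), since this is exactly what produces the tight binomial $\binom{n-f}{k-j}$ in the statement. I would also note that the proof yields equality-style accounting for $S(i,j,k)$ but only an inequality for $\dim R_n(\alpha)$, since the bound $|X_\lambda|\leq\binom{|\lambda|}{|\lambda^{Re}|}|X_{\lambda^{Re}}|$ is almost always strict; this slack is acceptable since only an upper bound is needed downstream.
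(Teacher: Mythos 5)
Your proof is correct, and the first two reductions coincide with the paper's: the binomial factors are peeled off from each $|X_{\lambda}|$, $|X_{\lambda/\mu}|$ by freezing the first row (the paper's Lemma \ref{dimupperbound}), and the sum over $\vec{b}$-partitions with prescribed $|\alpha^{Re}|$ is relaxed to a sum over all triples of partitions of sizes $(i,j,k)$ with the interlacing condition (the paper's Lemma \ref{alpharesum}). Where you diverge is in evaluating that triple sum $S(i,j,k)=i!k!/j!$. The paper (Lemma \ref{sumdimlemma}) constructs an auxiliary two-step vector $\vec{a} = (1,\dots,1,i-j+1,\dots,i-j+1)$ of length $i-j+k$, recognizes the triples as precisely the $\vec{a}$-partitions, and invokes its own main structural result (Theorem \ref{hanlonthm}: the eigenspace dimensions sum to $\dim \mathbbm{C}S_{M(\vec{a})} = |S_{M(\vec{a})}|$) together with the counting in Lemma \ref{sizelemma}; the degenerate boundary cases $i=j$ or $j=k$, where no $\vec{a}$-partition system exists, then require a separate argument via the branching formula $|X_\lambda|=\sum_{\mu}|X_{\lambda/\mu}||X_\mu|$ and the RSK identity $\sum_{\mu\vdash m}|X_\mu|^2=m!$. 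Your route is more self-contained: you first prove the representation-theoretic identity $\sum_{\lambda\supseteq\mu,\,\lambda\vdash n}|X_\lambda||X_{\lambda/\mu}|=\tfrac{n!}{|\mu|!}|X_\mu|$ (Littlewood--Richardson plus Frobenius reciprocity plus RSK), apply it twice after factoring the triple sum over $\mu_1^{Re}$, and close with the same RSK identity. This handles $i=j$, $j=k$, and the generic case uniformly, avoids any appeal to Hanlon's spectral decomposition, and in fact gives an independent rederivation of the paper's $|S_{M(\vec{a})}|$ count from pure symmetric-group representation theory --- a nice bonus, though it requires establishing one identity the paper sidesteps by leaning on its own Theorem \ref{hanlonthm}.
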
 
\begin{rmk}
Note that $\left| \alpha^{Re} \right|$ is defined to be $\left(\left| \lambda_1^{Re} \right|, \left| \mu_1^{Re} \right|,\left| \lambda_2^{Re} \right|\right)$, where $\lambda_1 \supseteq \mu_1 \subseteq \lambda_2$. Thus, if
\begin{equation*}
\left| \alpha^{Re} \right| = (i,j, k)
\end{equation*}
then $i \geq j \leq k$. Therefore, the the condition in Lemma \ref{dimensionsum} is the natural one. 
\end{rmk}

\begin{defn}\label{aijk}
To simplify notation from now on, define 
\begin{equation*}
a(i, j, k) = {f(n) \choose i}{g(n) \choose  i-j} {n- f(n) \choose k-j}{n - g(n) \choose k} 
\end{equation*}
The $n$ in the above expression will always be implied. 
\end{defn}

A number of supporting lemmas will eventually yield Lemma \ref{dimensionsum} above. 
\begin{lemma}\label{dimupperbound}
Let $\alpha= (\lambda_1, \mu_1, \lambda_2)$ be a $\vec{b}$-partition, where $\vec{b} = \vec{b}_n(f, g)$, satisfying $\left|\alpha^{Re}\right| = (i, j, k)$. Then, 
\begin{equation}
\dim(R_n(\alpha)) \leq a(i, j, k)
 \left| X_{\lambda_1^{Re}} \right| \left|X_{\lambda_1^{Re}/\mu_1^{Re}} \right| \left|X_{\lambda_2 ^{Re}/\mu_1^{Re}} \right|\left|X_{\lambda_2^{Re}} \right| 
\end{equation}
\end{lemma}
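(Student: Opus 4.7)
The plan is to apply Hanlon's explicit formula for $\dim(R_n(\alpha))$ from Theorem \ref{hanlonthm} (combined with Remark \ref{degskewrep}, which identifies each factor as a count of standard Young tableaux) and bound each of the four factors separately by peeling off the first row of the underlying shape. Concretely, since $s=2$ here, Theorem \ref{hanlonthm} gives
\begin{equation*}
\dim(R_n(\alpha)) = |X_{\lambda_1}| \cdot |X_{\lambda_1/\mu_1}| \cdot |X_{\lambda_2/\mu_1}| \cdot |X_{\lambda_2}|.
\end{equation*}
The goal is to bound each of these four factors by the corresponding binomial coefficient appearing in $a(i,j,k)$ times the matching ``remainder'' SYT count.

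The key combinatorial lemma (to be stated and proved inline) is: for partitions $\mu \subseteq \lambda$ with $|\lambda| = l$, $|\mu| = m$, $|\lambda^{Re}| = i'$, and $|\mu^{Re}| = j'$,
\begin{equation*}
|X_{\lambda/\mu}| \leq \binom{l-m}{i'-j'} \, |X_{\lambda^{Re}/\mu^{Re}}|.
\end{equation*}
To prove this, observe that a SYT of shape $\lambda/\mu$ assigns $\{1,2,\ldots,l-m\}$ to its boxes, of which $\lambda^1-\mu^1 = (l-m)-(i'-j')$ lie in the first row and $i'-j'$ lie in the remainder $\lambda^{Re}/\mu^{Re}$. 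To build such a filling, first choose which $i'-j'$ of the $l-m$ entries go into the remainder (contributing $\binom{l-m}{i'-j'}$), then arrange them as a SYT of shape $\lambda^{Re}/\mu^{Re}$ (contributing $|X_{\lambda^{Re}/\mu^{Re}}|$), and finally place the remaining entries in row 1 in increasing order (forced). Every SYT of $\lambda/\mu$ arises this way, but not every such construction satisfies the column-strict condition between the first row and the first row of the remainder, so this enumeration overcounts, giving the claimed inequality.

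Apply the key lemma to each of the four factors, using the sizes that follow from Definition \ref{defbpartition} together with Equations \eqref{leftoffg}--\eqref{rightoffg}: for $|X_{\lambda_1}|$ take $\mu=\emptyset$, $l=f(n)$, $i'=i$, giving $\binom{f(n)}{i}|X_{\lambda_1^{Re}}|$; for $|X_{\lambda_1/\mu_1}|$ take $l=f(n)$, $m=f(n)-g(n)$, $i'=i$, $j'=j$, so $l-m=g(n)$, giving $\binom{g(n)}{i-j}|X_{\lambda_1^{Re}/\mu_1^{Re}}|$; for $|X_{\lambda_2/\mu_1}|$ take $l=n-g(n)$, $m=f(n)-g(n)$, $i'=k$, $j'=j$, so $l-m=n-f(n)$, giving $\binom{n-f(n)}{k-j}|X_{\lambda_2^{Re}/\mu_1^{Re}}|$; and for $|X_{\lambda_2}|$ take $\mu=\emptyset$, $l=n-g(n)$, $i'=k$, giving $\binom{n-g(n)}{k}|X_{\lambda_2^{Re}}|$. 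Multiplying the four bounds assembles exactly $a(i,j,k) |X_{\lambda_1^{Re}}| |X_{\lambda_1^{Re}/\mu_1^{Re}}| |X_{\lambda_2^{Re}/\mu_1^{Re}}| |X_{\lambda_2^{Re}}|$, as required.

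There is no serious obstacle; the only thing to watch is bookkeeping of the box counts in each skew shape (most notably that $|\lambda_2/\mu_1|=|L_2|=n-f(n)$ rather than something involving $g(n)$). The inequality is intrinsically not sharp because the column conditions linking row 1 to row 2 can fail, but this slack is harmless: the next lemma will complete the estimate on $\sum_{|\alpha^{Re}|=(i,j,k)} \dim(R_n(\alpha))$ by summing the remainder factors over all valid remainder shapes.
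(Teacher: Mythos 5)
Your proposal is correct and follows essentially the same route as the paper: the paper also bounds each of the four factors in $\dim(R_n(\alpha)) = |X_{\lambda_1}||X_{\lambda_1/\mu_1}||X_{\lambda_2/\mu_1}||X_{\lambda_2}|$ by choosing which entries go below the first row (contributing a binomial coefficient) and then forming a ``shifted'' SYT of the remainder, with the observation that this overcounts because column-strictness between row~1 and row~2 may fail. Your packaging into a single clean lemma $|X_{\lambda/\mu}| \leq \binom{l-m}{i'-j'}|X_{\lambda^{Re}/\mu^{Re}}|$ applied four times is a tidier presentation than the paper's four ad hoc instances, but the underlying combinatorics and the resulting bound are identical.
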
 
\begin{proof}[\bf Proof:]
From Equation \eqref{dimeig} in Theorem \ref{hanlonthm}, 
\begin{equation*}
 \dim(R_n(\alpha)) = \left|X_{\lambda_1} \right| \left|X_{\lambda_1 /\mu_1}\right| \left|X_{\lambda_2} \right| \left|X_{\lambda_2 /\mu_1}\right|
\end{equation*}

Furthermore, as noted in Remark \ref{degskewrep}, $\left| X_{\alpha/\beta}\right|$ is the number of standard Young tableaux of shape $\alpha/\beta$. Use this to examine the expression above. Clearly, $\left| X_{\lambda_1} \right|$ is the number of standard Young tableaux of shape $\lambda_1$. A naive way of trying to construct such a tableau would be to pick the $f(n) - i$ elements of $\{1, 2, \dots, f(n)\}$ that will go in the first row of the standard Young tableau of shape $\lambda_1$, and then use the remaining $i$ elements to construct a `shifted' standard Young tableau of the remainder of $\lambda_1$, called $\lambda_1^{Re}$ above. Note that once the elements of the first row are chosen, they must be arranged in exactly increasing order: thus, each choice of subset and of remaining `shifted' tableau results in exactly one Young tableau. While this will vastly overcount the number of standard Young tableaux of shape $\lambda_1$, every single one can be constructed in this way. Thus,
\begin{equation*}
\left| X_{\lambda_1} \right| \leq {f(n) \choose {f(n) - i}} \left| X_{\lambda_1^{Re}} \right| = {f(n) \choose i} \left| X_{\lambda_1^{Re}} \right| 
\end{equation*}
Similarly, to construct a skew tableau of shape $\lambda_1/\mu_1$, pick the $g(n) - (i - j)$ elements for the first row, and construct a `shifted' tableau of shape $\lambda_1^{Re}/\mu_1^{Re}$ with the remaining numbers. Thus,
\begin{equation*}
\left|X_{\lambda_1 /\mu_1}\right| \leq { g(n) \choose  i-j} \left|X_{\lambda_1 ^{Re}/\mu_1^{Re}} \right| 
\end{equation*}
Proceeding in this way, 
\begin{align*}
\left|X_{\lambda_2 /\mu_1}\right| &\leq {n- f(n) \choose k-j} \left|X_{\lambda_2 ^{Re}/\mu_1^{Re}} \right| \\
\left|X_{\lambda_2}\right| &\leq {n - g(n) \choose k} \left|X_{\lambda_2^{Re}} \right|.
\end{align*}
Multiplying the above inequalitites together gives precisely 
\begin{equation*}
\dim(R_n(\alpha)) \leq a(i, j, k)
 \left| X_{\lambda_1^{Re}} \right| \left|X_{\lambda_1^{Re}/\mu_1^{Re}} \right| \left|X_{\lambda_2 ^{Re}/\mu_1^{Re}} \right|\left|X_{\lambda_2^{Re}} \right| 
\end{equation*}
as desired. 
\end{proof}
Using the lemma above, 
\begin{equation*}
\begin{split}
\sum_{\left| \alpha^{Re} \right| = (i, j, k)}  \dim &(R_n(\alpha)) \\
& \leq a(i, j, k)\sum_{\left| \alpha^{Re} \right| = (i, j, k)}\left| X_{\lambda_1^{Re}} \right| \left|X_{\lambda_1^{Re}/\mu_1^{Re}} \right| \left|X_{\lambda_2 ^{Re}/\mu_1^{Re}} \right|\left|X_{\lambda_2^{Re}} \right| 
\end{split}
\end{equation*}
In order to simplify things slightly, the following simple lemma is useful: 

\begin{lemma}\label{alpharesum}
For any $i, j, k$, 
\begin{equation*}
\begin{split}
\sum_{\left|\alpha^{Re}\right| = (i, j, k)} \left| X_{\lambda_1^{Re}} \right| \left|X_{\lambda_1^{Re}/\mu_1^{Re}} \right| & \left|X_{\lambda_2 ^{Re}/\mu_1^{Re}} \right|\left|X_{\lambda_2^{Re}} \right|  \\
&\leq \sum_{(\lambda_1', \mu_1', \lambda_2') } \left| X_{\lambda_1'} \right| \left|X_{\lambda_1'/\mu_1'} \right| \left|X_{\lambda_2'/\mu_1'} \right|\left|X_{\lambda_2'} \right|
\end{split}
\end{equation*}
where the right-hand sum is over triples of partitions such that $\left| \lambda_1' \right| = i, \left| \mu_1' \right| =j, \left|\lambda_2'\right| = k$, and $\lambda_1' \supseteq \mu_1' \subseteq \lambda_2'$.
\end{lemma}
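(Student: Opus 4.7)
The plan is to set up an injection from the index set of the left-hand sum into the index set of the right-hand sum, under which each summand on the left equals the corresponding summand on the right. Since every term is non-negative, this will express the left-hand side as a subsum of the right-hand side and give the inequality immediately.

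The map sends a $\vec{b}$-partition $\alpha = (\lambda_1, \mu_1, \lambda_2)$ with $\left|\alpha^{Re}\right| = (i,j,k)$ to the triple $(\lambda_1^{Re}, \mu_1^{Re}, \lambda_2^{Re})$. Three things need to be verified. \textbf{(i) Target set:} The sizes $(|\lambda_1^{Re}|, |\mu_1^{Re}|, |\lambda_2^{Re}|) = (i, j, k)$ hold by the definition of $\left|\alpha^{Re}\right|$, and the containments $\lambda_1^{Re} \supseteq \mu_1^{Re} \subseteq \lambda_2^{Re}$ follow from $\lambda_1 \supseteq \mu_1 \subseteq \lambda_2$ by restricting the row-by-row inequalities $\lambda_1^s \geq \mu_1^s$ and $\lambda_2^s \geq \mu_1^s$ to $s \geq 2$. \textbf{(ii) Injectivity:} Since the totals $|\lambda_1| = f(n)$, $|\mu_1| = f(n)-g(n)$, $|\lambda_2| = n - g(n)$ are fixed (as $\alpha$ is a $\vec{b}$-partition), each of $\lambda_1, \mu_1, \lambda_2$ is uniquely recovered from its remainder by prepending a first row of size $f(n) - i$, $f(n) - g(n) - j$, and $n - g(n) - k$, respectively, so distinct $\alpha$'s map to distinct triples. \textbf{(iii) Matching summands:} The factors $\left|X_{\lambda_1^{Re}}\right|$, $\left|X_{\lambda_1^{Re}/\mu_1^{Re}}\right|$, $\left|X_{\lambda_2^{Re}/\mu_1^{Re}}\right|$, $\left|X_{\lambda_2^{Re}}\right|$ depend only on the underlying (skew) shapes, so they equal the corresponding factors on the right evaluated at the image triple.

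I do not anticipate significant obstacles. The only subtle point is preservation of containment under the remainder operation, and this is a one-line check from the row-wise characterization of $\supseteq$. The map is not in general surjective (for instance, prepending a first row of size $f(n)-i$ may fail to produce a valid partition when $f(n)-i < \lambda_1'^{\,1}$, and even if all three resulting objects are partitions, the containment $\lambda_1 \supseteq \mu_1$ requires the extra condition $f(n)-i \geq f(n)-g(n)-j$), but non-surjectivity only strengthens the inequality by leaving extra non-negative terms on the right.
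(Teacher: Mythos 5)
Your proof is correct and uses essentially the same approach as the paper: the paper likewise observes that $\alpha^{Re}$ determines $\alpha$ uniquely (so the map $\alpha \mapsto (\lambda_1^{Re}, \mu_1^{Re}, \lambda_2^{Re})$ is injective) and that the image satisfies the size and containment constraints of the right-hand index set, whence the inequality is just a subsum comparison of non-negative terms. The paper states this "follows trivially"; you have simply written out the checks that the paper leaves implicit.
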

\begin{proof}[\bf Proof:]
Clearly, if $\left|\alpha^{Re}\right| = (i, j, k)$, then 
\begin{equation*}
\left| \lambda_1^{Re} \right| = i, \left| \mu_1^{Re} \right| =j, \left|\lambda_2^{Re}\right| = k, \text{ and } \lambda_1^{Re} \supseteq \mu_1^{Re} \subseteq \lambda_2^{Re}
\end{equation*}
Furthermore, given $\vec{b} = \vec{b}_n(f, g)$, $\alpha^{Re}$ uniquely determines $\alpha$ (although not every choice of $(\lambda_1', \mu_1', \lambda_2')$ will actually produce an $\alpha$). Hence, the lemma follows trivially. 
\end{proof}
Now, note that the sum on the right-hand side of the above lemma looks a lot like a sum of $\dim(R_n(\alpha'))$ over $\vec{a}$-partitions $\alpha'$ for some $\vec{a}$. Using this heuristic, consider rewriting the above as 
\begin{equation*}
\sum_{\alpha'} \dim(R_n(\alpha'))  = \left|S_{M(\vec{a})}\right|
\end{equation*}
where the equality follows since the sum of the dimensions of the eigenspaces is just the dimension of the whole space.

However, the above heuristic has the following trouble: if either $i=j$ or $j=k$, then the $(\lambda_1', \mu_1', \lambda_2')$ above are not in fact $\vec{a}$-partitions for any $\vec{a}$, since $\lambda_1' = \mu_1'$ or $\mu_1' = \lambda_2'$ are forbidden. However, this turns out to be a non-essential part of the definition of $\vec{a}$-partitions. The following supporting lemma overcomes the issue. 

\begin{lemma}\label{partitionformula}
Let $n \geq m$, and let $\lambda$ be a partition of $n$. Then, the following equality holds
\begin{equation*}
\left|X_\lambda\right| = \sum_{\mu \leq \lambda,\\ |\mu| = m} \left|X_{\lambda/\mu}\right| \left| X_\mu \right|
\end{equation*}
\end{lemma}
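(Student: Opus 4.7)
The plan is to prove this classical identity bijectively. Given a standard Young tableau (SYT) $T$ of shape $\lambda$ with entries $1, 2, \ldots, n$, I will split $T$ according to which cells carry the ``small'' entries $\{1, \ldots, m\}$ and which carry the ``large'' entries $\{m+1, \ldots, n\}$. The key observation is that the set of cells of $T$ containing an entry at most $m$ automatically forms a sub-Young diagram of $\lambda$: if cell $(i,j)$ has entry $\le m$ and $i>1$, then by the column-increasing property the entry in $(i-1,j)$ is strictly smaller, hence also $\le m$; the same argument applies to $(i,j-1)$. Thus the set of small-entry cells is closed under moving north and west, so it is the Young diagram of some partition $\mu$ with $|\mu|=m$ and $\mu \subseteq \lambda$.

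First I would define the map $T \mapsto (S, T')$, where $S$ is the restriction of $T$ to $\mu$ (an SYT of shape $\mu$ with entries $1,\ldots,m$) and $T'$ is obtained from the restriction of $T$ to the skew shape $\lambda/\mu$ by subtracting $m$ from each entry (so its entries are $1,\ldots,n-m$). The row- and column-strict increasing conditions on $S$ and $T'$ are inherited from $T$. Conversely, given $\mu\subseteq\lambda$ with $|\mu|=m$, an SYT $S$ of shape $\mu$, and an SYT $T'$ of shape $\lambda/\mu$, I define $T$ by placing the entries of $S$ in $\mu$ and the entries of $T'$ (after adding $m$) in $\lambda/\mu$. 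The increasing property along any row or column of $\lambda$ is immediate: within $\mu$ it follows from $S$, within $\lambda/\mu$ from $T'$, and across the boundary from the fact that every entry in $\mu$ is at most $m$ while every entry in $\lambda/\mu$ is greater than $m$. These two constructions are inverse to each other.

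Consequently the SYTs of shape $\lambda$ are in bijection with triples $(\mu, S, T')$ where $\mu\subseteq\lambda$ has $|\mu|=m$, $S$ is an SYT of shape $\mu$, and $T'$ is an SYT of shape $\lambda/\mu$. Counting both sides yields
\begin{equation*}
|X_\lambda| \;=\; \sum_{\substack{\mu \subseteq \lambda \\ |\mu|=m}} |X_\mu|\, |X_{\lambda/\mu}|,
\end{equation*}
which is exactly the claimed formula (recalling the convention $|X_{\lambda/\mu}|$ equals the number of standard Young tableaux of shape $\lambda/\mu$, as noted in Remark \ref{degskewrep}).

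The only real step requiring care is verifying that the small-entry cells always form a genuine Young diagram $\mu$; I would state this as a short sub-claim and prove it by the two-line argument above. Everything else is bookkeeping with row/column strictness, and nothing delicate arises at the boundary between $\mu$ and $\lambda/\mu$ because $m$ cleanly separates the two sets of entries.
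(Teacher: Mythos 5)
Your proof is correct and is essentially the same bijection the paper uses: it identifies an SYT of shape $\lambda$ with a pair consisting of an SYT of $\mu$ (on entries $1,\ldots,m$) and an SYT of $\lambda/\mu$ (on entries shifted down by $m$), summed over $\mu \subseteq \lambda$ with $|\mu|=m$. The only difference is cosmetic — you define the decomposition map first and then its inverse, while the paper defines the gluing map $g$ first and then its inverse $f$; the key sub-claim that the small-entry cells of an SYT form a Young diagram appears in both arguments.
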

\begin{proof}[\bf Proof:]
This may well be a well-known formula. However, it has a simple combinatorial proof presented below. As noted above, $\left| X_\lambda\right|$ is the number of standard Young tableaux of shape $\lambda$, and similarly for $\mu$ and $\lambda/\mu$. Let $Tab(s)$ denote the set of standard Young tableaux of shape $s$, whether $s$ is a partition or a skew-partition. Then, what is needed is a bijection $g$ such that 
\begin{equation}\label{bij}
g: \bigcup_{|\mu| = m} Tab(\mu)\times Tab(\lambda/\mu) \longrightarrow Tab(\lambda)
\end{equation}
where the union is over partitions $\mu$. 

To define the bijection $g$, note that if $Y_\lambda$ is a standard Young tableau of shape $\lambda$ and $Y_{\lambda/\mu}$ is a Young tableau of shape $\lambda/\mu$, a Young tableau of shape $\lambda$ can be created by adding $m$ to every entry of the $Y_{\lambda/\mu}$ and then sticking the two tableaux together. Call this new tableau $g(Y_\mu, Y_{\lambda/\mu})$. 

For example, if 
$\lambda = (4,2,1)$, and $\mu = (3,1)$, then the following Young tableaux

\begin{picture}(250,60)(20,-5)

\put(85, 35){$Y_{\mu} =$}
\linethickness{0.6 pt}
\put(110, 45){\line(1, 0){45}}
\put(110, 30){\line(1, 0){45}}
\put(110, 15){\line(1,0){15}}

\put(110, 15){\line(0,1){30}}
\put(125, 15){\line(0,1){30}}
\put(140, 30){\line(0,1){15}}
\put(155, 30){\line(0,1){15}}

\put(115, 35){1}
\put(130, 35){3}
\put(145, 35){4}
\put(115, 20){2}

\put(210, 35){$Y_{\lambda/\mu} =$}
\linethickness{0.6 pt}
\put(285, 45){\line(1, 0){15}}
\put(285, 30){\line(1, 0){15}}
\put(255, 30){\line(1, 0){15}}
\put(240, 15){\line(1,0){30}}
\put(240, 0){\line(1,0){15}}

\put(300, 30){\line(0,1){15}}
\put(285, 30){\line(0,1){15}}
\put(255, 0){\line(0,1){30}}
\put(240, 0){\line(0,1){15}}
\put(270, 15){\line(0,1){15}}

\put(290, 35){1}
\put(260, 20){3}
\put(245, 5){2}

\end{picture}
\\ can be combined into $g(Y_\mu, Y_{\lambda/\mu})$, which is

\begin{picture}(250,60)(5,-5)

\linethickness{0.6 pt}
\put(125, 35){$Y_\lambda = $}

\put(150, 0){\line(0,1){45}}
\put(150, 45){\line(1, 0){60}}
\put(165, 0){\line(0,1){45}}
\put(180, 15){\line(0,1){30}}
\put(195, 30){\line(0,1){15}}
\put(210, 30){\line(0,1){15}}
\put(150, 30){\line(1, 0){60}}
\put(150, 15){\line(1,0){30}}
\put(150, 0){\line(1, 0){15}}

\put(155, 35){1}
\put(170, 35){3}
\put(185, 35){4}
\put(200, 35){5}
\put(155, 20){2}
\put(170, 20){7}
\put(155, 5){6}

\end{picture}

Clearly, for any $Y_\mu \in Tab(\mu)$ and $Y_{\lambda/\mu} \in Tab(\lambda/\mu)$, the above bijection results in a tableau of shape $\lambda$ which is filled with the numbers $\{1, 2, \dots, n\}$. Thus, it remains to check that the rows and columns in $g(Y_\mu, Y_{\lambda/\mu})$ are in increasing order from left to right and from top to bottom. 

Proceed by contradiction: assume there are $i$ and $j$ such that $i < j$, but $j$ is strictly to the left of $i$, or strictly above $i$ in $g(Y_\mu, Y_{\lambda/\mu})$. If $i$ and $j$ are both at most $m$, then they both appeared in $Y_\mu$, and this is impossible; similarly, if both $i$ and $j$ are greater than $m$, then $i-m$ and $j-m$ both appeared in $Y_{\lambda/\mu}$ and this is similarly impossible. Thus, assume that $i \leq m < j.$ But then $j$ must be in a square belonging to $\lambda/\mu$, and $i$ must be in a square belonging to $\mu$, and therefore it is impossible for $j$ to be strictly to the left or strictly above $i$. Thus, $g(Y_{\mu}, Y_{\lambda/\mu})$ is a standard Young tableau, so the map $g$ is well-defined. 

Now for the inverse $f$ of the map $g$. It is easy to see from above that $\mu$ is defined precisely by the set of squares which contain the numbers $\{1, 2, \dots, m\}$. Using arguments identical to the above, in any standard Young tableau $Y_{\lambda}$ the set of squares containing the elements $\{1, 2, \dots, m\}$ is a partition of $m$. Thus, the inverse map $f$ must map $Y_\lambda$ to $(Y_{\mu}, Y_{\lambda/\mu})$, where $Y_{\mu}$ is simply the standard Young tableaux induced by the squares containing $\{1, 2, \dots, m\}$, and $Y_{\lambda/\mu}$ is obtained by deleting the squares in $\mu$ from $Y_{\lambda}$, and subtracting $m$ from the remaining squares. For example, if 

\begin{picture}(250,60)(5,-5)

\linethickness{0.6 pt}
\put(125, 35){$Y_\lambda = $}

\put(150, 0){\line(0,1){45}}
\put(150, 45){\line(1, 0){60}}
\put(165, 0){\line(0,1){45}}
\put(180, 15){\line(0,1){30}}
\put(195, 30){\line(0,1){15}}
\put(210, 30){\line(0,1){15}}
\put(150, 30){\line(1, 0){60}}
\put(150, 15){\line(1,0){30}}
\put(150, 0){\line(1, 0){15}}

\put(155, 35){1}
\put(170, 35){2}
\put(185, 35){5}
\put(200, 35){6}
\put(155, 20){3}
\put(170, 20){4}
\put(155, 5){7}

\end{picture}
\\then the inverse map $f$ maps it to the pair

\begin{picture}(250,60)(20,-5)

\put(85, 35){$Y_{\mu} =$}
\linethickness{0.6 pt}
\put(110, 45){\line(1, 0){30}}
\put(110, 30){\line(1, 0){30}}
\put(110, 15){\line(1,0){30}}

\put(110, 15){\line(0,1){30}}
\put(125, 15){\line(0,1){30}}
\put(140, 15){\line(0,1){30}}

\put(115, 35){1}
\put(130, 35){2}
\put(115, 20){3}
\put(130, 20){4}

\put(210, 35){$Y_{\lambda/\mu} =$}
\linethickness{0.6 pt}
\put(270, 45){\line(1, 0){30}}
\put(270, 30){\line(1, 0){30}}
\put(240, 15){\line(1,0){15}}
\put(240, 0){\line(1,0){15}}

\put(300, 30){\line(0,1){15}}
\put(285, 30){\line(0,1){15}}
\put(270, 30){\line(0,1){15}}
\put(240, 0){\line(0,1){15}}
\put(255, 0){\line(0,1){15}}

\put(275, 35){1}
\put(290, 35){2}
\put(245, 5){3}

 \end{picture}
\\and hence the bijection has an explicit inverse. This shows that the sizes of the sets in Equation \eqref{bij} are equal, and hence 
\begin{equation*}
\left|X_\lambda\right| = \sum_{\mu \leq \lambda,\\ |\mu| = m} \left|X_{\lambda/\mu}\right| \left| X_\mu \right|
\end{equation*}
as required. 
\end{proof}

The next lemmas tackle the expression on the right-hand side of Lemma \ref{alpharesum}.

\begin{lemma}\label{sumdimlemma}
Let $(i, j, k)$ be a triple of positive integers that satisifes $i \geq j \leq k$. Define
\begin{equation}\label{bigquant}
A  = \sum_{(\lambda_1', \mu_1', \lambda_2')}  \left| X_{\lambda_1'}\right| \left|X_{\lambda_1'/\mu_1'}\right|\left| X_{\lambda_2' /\mu_1'} \right| \left| X_{\lambda_2'} \right| 
\end{equation}
where the sum is over triples of partitions $(\lambda_1', \mu_1', \lambda_2')$ such that $\left| \lambda_1' \right| = i, \left| \mu_1' \right| = j, \left| \lambda_2' \right| = k$ and $\lambda_1' \supseteq \mu_1' \subseteq \lambda_2'$. Then, 
\begin{equation*}
A = \frac{i! k!}{j!}
\end{equation*}
\end{lemma}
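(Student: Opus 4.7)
My plan is to interchange the order of summation so that $\mu_1'$ becomes the outer index, reduce both resulting inner sums to a single classical identity, and then collapse the outer sum using $\sum_{\mu \vdash j} |X_\mu|^2 = j!$. Explicitly, regrouping gives
\begin{equation*}
A = \sum_{\mu_1' \vdash j} \left( \sum_{\lambda_1' \supseteq \mu_1',\; |\lambda_1'| = i} |X_{\lambda_1'}|\, |X_{\lambda_1'/\mu_1'}| \right) \left( \sum_{\lambda_2' \supseteq \mu_1',\; |\lambda_2'| = k} |X_{\lambda_2'}|\, |X_{\lambda_2'/\mu_1'}| \right),
\end{equation*}
so the problem reduces to evaluating the generic inner sum
\begin{equation*}
\Sigma(\mu, N) := \sum_{\lambda \supseteq \mu,\; |\lambda| = N} |X_\lambda|\, |X_{\lambda/\mu}|.
\end{equation*}

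The heart of the argument, and the main obstacle, is the identity
\begin{equation*}
\Sigma(\mu, N) = \frac{N!}{m!}\, |X_\mu|, \qquad \text{where } m = |\mu|.
\end{equation*}
The cleanest proof is representation-theoretic. The branching rule for the symmetric group states that $V_\lambda$ restricted from $S_N$ to $S_m$ decomposes as $\bigoplus_{\mu \subseteq \lambda,\, |\mu|=m} V_\mu^{\oplus |X_{\lambda/\mu}|}$. Consequently, restricting the regular representation $\mathbb{C}[S_N] = \bigoplus_\lambda V_\lambda^{\oplus |X_\lambda|}$ to $S_m$ yields the irreducible $V_\mu$ with multiplicity exactly $\Sigma(\mu, N)$. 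On the other hand, as a left $S_m$-module, $\mathbb{C}[S_N]$ is the direct sum of $N!/m!$ copies of $\mathbb{C}[S_m]$, one for each right coset of $S_m$ in $S_N$; hence the same multiplicity equals $(N!/m!)\,|X_\mu|$. Equating the two expressions proves the identity. A purely combinatorial proof is also available by iterating the bijection $g$ constructed in Lemma \ref{partitionformula} to peel off the inner shape of a standard Young tableau and carefully matching the resulting triples, but the representation-theoretic route is shorter.

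Substituting into the expression for $A$,
\begin{equation*}
A = \sum_{\mu_1' \vdash j} \frac{i!}{j!}\, |X_{\mu_1'}| \cdot \frac{k!}{j!}\, |X_{\mu_1'}| = \frac{i!\,k!}{(j!)^2} \sum_{\mu_1' \vdash j} |X_{\mu_1'}|^2 = \frac{i!\,k!}{j!},
\end{equation*}
where the final equality uses the standard identity $\sum_{\mu \vdash j} |X_\mu|^2 = j!$, expressing the dimension of $\mathbb{C}[S_j]$ through its isotypic decomposition (equivalently, via RSK). This gives the desired formula.
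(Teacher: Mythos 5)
Your proof is correct, but it follows a genuinely different route from the paper's. The paper recognizes the sum $A$ as $\sum_{\alpha} \dim R_n(\alpha)$ over $\vec{a}$-partitions $\alpha$ for the specific vector $\vec{a} = (1,\dots,1,i-j+1,\dots,i-j+1)$ of length $i-j+k$, and then invokes Hanlon's decomposition (Theorem \ref{hanlonthm}) to conclude $A = |S_{M(\vec{a})}| = i!k!/j!$ via Lemma \ref{sizelemma}; the degenerate cases $i=j$ and $j=k$ (where the requirement $\lambda_1' \supsetneq \mu_1' \subsetneq \lambda_2'$ of a genuine $\vec{a}$-partition fails) are then patched separately using Lemma \ref{partitionformula}. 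Your approach instead regroups by $\mu_1'$ and applies, twice, the identity $\sum_{\lambda \supseteq \mu,\,|\lambda|=N}|X_\lambda||X_{\lambda/\mu}| = (N!/m!)|X_\mu|$, proved by computing in two ways the multiplicity of $V_\mu$ in $\mathrm{Res}^{S_N}_{S_m}\mathbb{C}[S_N]$, and then finishes with $\sum_{\mu\vdash j}|X_\mu|^2 = j!$. This is uniform across all $(i,j,k)$ with $i \geq j \leq k$, so you avoid the case split entirely, and it does not depend on Hanlon's theorem or the $\vec{a}$-partition formalism at all — it is self-contained symmetric-group representation theory. The paper's argument, by contrast, is shorter once Hanlon's machinery is taken for granted and has the virtue of exhibiting the sum as literally an $|S_{M(\vec{a})}|$, which keeps the combinatorial meaning transparent; your argument buys case-freedom and independence from the surrounding framework at the cost of importing the regular-representation/branching calculation.
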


\begin{proof}[\bf Proof:] If $i > j < k$, define the vector $\vec{a}$ to be 
\begin{equation*}
\vec{a} = (1, 1, \dots, 1, i - j + 1, \dots, i - j+1)
\end{equation*}
where the number of initial $1$s is $i$, and the total number of terms in the vector is $i - j + k$. Then it is easy to check that the set of all $(\lambda_1', \mu_1', \lambda_2')$ such that $|\lambda_1'| = i, |\mu_1'| = j, |\lambda_2'| = k$ and $\lambda_1' \supseteq \mu_1' \subseteq \lambda_2'$ is the set of $\vec{a}$-partitions, as defined in Definition \ref{defbpartition}. Thus,
\begin{align*}
A &= \sum_{(\lambda_1', \mu_1', \lambda_2') \text{ $\vec{a}$-partition}}  \left| X_{\lambda_1'}\right| \left|X_{\lambda_1'/\mu_1'}\right|\left| X_{\lambda_2' /\mu_1'} \right| \left| X_{\lambda_2'} \right| \\
 &= \sum_{(\lambda_1', \mu_1', \lambda_2') \text{ $\vec{a}$-partition}} \dim(R_n(\lambda_1', \mu_1', \lambda_2'))
\end{align*}
where the second equality follows from Equation \eqref{dimeig} in Theorem \ref{hanlonthm}. But from the same theorem, 
\begin{equation*}
\mathbbm{C} S_{M(\vec{a})} = \bigoplus_{\alpha \text{ $\vec{a}$-partition}} R_n(\alpha)
\end{equation*}
Thus,  
\begin{equation*}
\sum_{\alpha \text{ $\vec{a}$-partition}} \dim(R_n(\alpha)) = \dim \mathbbm{C} S_{M(\vec{a})} = \left| S_{M(\vec{a})} \right|
\end{equation*}
But for the choice of $\vec{a}$, from Lemma \ref{sizelemma}, 
\begin{equation*}
\left| S_{M(\vec{a})} \right| = \frac{i! k!}{j!}
\end{equation*}
finally yielding that 
\begin{equation*}
A = \sum_{\alpha \text{ $\vec{a}$-partition}} \dim(R_n(\alpha)) =  \left| S_{M(\vec{a})} \right|  = \frac{i! k!}{j!}
\end{equation*}
as required. 

It remains to explain how to handle the case where $i = j$, or $j = k$, or both. Consider the case $i = j < k$ (the same method will apply to all the other cases.) As before, 
\begin{equation*}
 A = \sum_{(\lambda_1', \mu_1', \lambda_2')}  \left| X_{\lambda_1'}\right| \left|X_{\lambda_1'/\mu_1'}\right|\left| X_{\lambda_2' /\mu_1'} \right| \left| X_{\lambda_2'} \right|  
\end{equation*} 
where the sum is over $\left| \lambda_1' \right| = i, \left| \mu_1' \right| = j, \left| \lambda_2' \right| = k$ and $\lambda_1' \supseteq \mu_1' \subseteq' \lambda_2$. Since $i = j$, and $\lambda_1' \supseteq \mu_1'$, it must be that $\lambda_1' = \mu_1'$. Thus, 
\begin{equation*}
A = \sum_{(\lambda_1',  \lambda_2')}  \left| X_{\lambda_1'}\right| \left| X_{\lambda_2' /\lambda_1'} \right| \left| X_{\lambda_2'} \right|  
\end{equation*}
where the sum is over $|\lambda_1'| = i$, $\left| \lambda_2'\right| = k$, and $\lambda_1' \subseteq \lambda_2'$. But from Lemma \ref{partitionformula}, for a fixed $\lambda_2'$,
\begin{equation*}
\sum_{\lambda_1' \subseteq \lambda_2', |\lambda_1'| = i} \left| X_{\lambda_2' /\lambda_1'} \right| \left| X_{\lambda_1'} \right| = \left| X_{\lambda_2'}\right| 
\end{equation*}
Thus, 
\begin{align*}
A &= \sum_{(\lambda_1',  \lambda_2')}  \left| X_{\lambda_1'}\right| \left| X_{\lambda_2' /\lambda_1'} \right| \left| X_{\lambda_2'} \right| \\
 &= \sum_{|\lambda_2'| = k} \left| X_{\lambda_2'} \right| \sum_{\lambda_1' \subseteq \lambda_2', |\lambda_1'| = i}\left| X_{\lambda_2' /\lambda_1'} \right| \left| X_{\lambda_1'} \right| \\
 &= \sum_{\left| \lambda_2' \right| = k} \left| X_{\lambda_2'} \right|^2  
\end{align*}
and the sum on the right is well-known to be $k!$. Therefore, in this case, 
\begin{equation*}
A = k! = \frac{i! k!}{j!}
\end{equation*}
since $i = j$. The other cases with equality can be done similarly, completing the proof. 
\end{proof}
 
With all these preliminaries, the lemma from the beginning of the section can now be proved. 

\begin{proof}[\bf Proof of Lemma \ref{dimensionsum}]
From Lemma \ref{dimupperbound}, 
\begin{equation*}
\begin{split}
\sum_{\left|\alpha^{Re} \right| = (i, j, k)}\dim&(R_n(\alpha)) \\
&\leq a(i, j, k)
\sum_{\left|\alpha^{Re} \right| = (i, j, k)} \left| X_{\lambda_1^{Re}} \right| \left|X_{\lambda_1^{Re}/\mu_1^{Re}} \right| \left|X_{\lambda_2 ^{Re}/\mu_1^{Re}} \right|\left|X_{\lambda_2^{Re}} \right| 
\end{split}
\end{equation*}
Combining this with Lemma \ref{alpharesum},  
\begin{equation*}
\sum_{\left|\alpha^{Re} \right| = (i, j, k)}\dim(R_n(\alpha))\leq a(i, j, k) \sum_{(\lambda_1', \mu_1', \lambda_2') } \left| X_{\lambda_1'} \right| \left|X_{\lambda_1'/\mu_1'} \right| \left|X_{\lambda_2'/\mu_1'} \right|\left|X_{\lambda_2'} \right|
\end{equation*}
where the right-hand sum is over triples of partitions such that $\left| \lambda_1' \right| = i, \left| \mu_1' \right| =j, \left|\lambda_2'\right| = k$, and $\lambda_1' \supseteq \mu_1' \subseteq \lambda_2'$. However, from Lemma \ref{sumdimlemma},  
\begin{equation*}
\sum_{(\lambda_1', \mu_1', \lambda_2') } \left| X_{\lambda_1'} \right| \left|X_{\lambda_1'/\mu_1'} \right| \left|X_{\lambda_2'/\mu_1'} \right|\left|X_{\lambda_2'} \right| = \frac{i!k!}{j!}
\end{equation*}
and thus 
\begin{equation*}
\sum_{\left|\alpha^{Re} \right| = (i, j, k)}\dim(R_n(\alpha)) \leq a(i, j, k) \frac{i!k!}{j!}
\end{equation*}
as required. 
\end{proof}

\section{Chi-Squared Upper Bound}\label{chisquaredupperchapter}
This final section puts all the quantities together to prove the upper bound part of Theorem \ref{chitheorem}. For the remainder of this section, let $t$ be defined the way it is for the upper bound; that is, 
\begin{equation}\label{tupperboundrestated}
t = \frac{(n+2\Delta)(\log f(n) + \log g(n))}{4f(n)} + c \frac{n+2\Delta}{4f(n)}
\end{equation}
From Equation \eqref{decompositionbyijk},
\begin{equation}\label{sumtobound}
\sum_{1 \neq \Lambda_1(\alpha) \geq 0} \dim(R_n(\alpha)) \Lambda_1(\alpha)^{2t}   
 \leq \sum_{(i, j, k) \neq (0, 0, 0)} s(i, j, k)^{2t}\sum_{\left| \alpha^{Re} \right| = (i, j, k)} \dim (R_n(\alpha)) 
\end{equation}
where $s(i,j, k)$ is defined as in Definition \ref{sijk}. From the lead-term analysis in Section \ref{sectchisquaredlower}, the `limiting' eigenvalue corresponds to the case $(i, j, k) = (1, 0, 0)$. This suggests that term is the largest. 

The above sum will be broken up into various pieces and bounds will be proved for each piece. There are three zones: 

\begin{enumerate}
\item The first zone is $k \geq (n-g(n))/5$. As should be clear from the definition of $s(i, j, k)$, this is the zone with a constant upper bound for the eigenvalues $\Lambda_1(\alpha)$. This case is fairly straightforward, and will be done in Lemma \ref{klarge} below. 

\item The second zone is $i = 0$ and $k< (n-g(n))/5$. For this case, Lemma \ref{dimensionsum} is used for the upper bound. From the heuristics in the lead term analysis, the expected limiting term for this piece is $(0, 0, 1)$ -- since it was noted that this term imposes lower order restrictions than $(1,0,0)$, this case should also be fairly simple. This will be done in Lemma \ref{iiszero}. 

\item The final zone is $i > 0$ and $k< (n-g(n))/5$, for which Lemma \ref{dimensionsum} is again used. This is the case that contains the limiting term $(1, 0, 0)$, and as such should provide the biggest bound. This will be done in Lemma \ref{iisn'tzero}. 

\end{enumerate} 
To simplify notation, make the following definition: 

\begin{defn}\label{Apieces}
Define 
\begin{align*}
A_1 &= \left\{ (i, j, k) \left| \right. k\geq \frac{n-g(n)}{5} \right\}\\
A_2 &= \left\{ (i, j, k) \neq(0,0,0) \left| \right. k < \frac{n-g(n)}{5}, i = 0 \right\}\\
A_3 &= \left\{ (i, j, k) \left| \right. k < \frac{n-g(n)}{5}, i \neq 0 \right\} 
\end{align*}
This corresponds to the pieces being bounded.  
\end{defn}
\begin{lemma} \label{klarge} 
Using the definition of $A_1$ above, let 
\begin{equation*}
Q_1 = \sum_{(i, j, k) \in A_1} s(i, j, k)^{2t}\sum_{\left| \alpha^{Re} \right| = (i, j, k)} \dim (R_n(\alpha))  
\end{equation*}
Then, for sufficiently large $n$, $Q_1 \leq n^{-n}$. 
\end{lemma}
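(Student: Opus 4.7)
The strategy is to exploit the fact that on $A_1$ the eigenvalue bound is a uniform constant $\tfrac{9}{10}$ strictly less than $1$, and then simply use the crude fact that the total dimension of all eigenspaces is $|S_{M_n}| \leq n!$.

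First, by the definition of $s(i,j,k)$ in Definition \ref{sijk}, every triple $(i,j,k) \in A_1$ satisfies $s(i,j,k) = \tfrac{9}{10}$. Hence
\begin{equation*}
Q_1 \leq \left(\tfrac{9}{10}\right)^{2t} \sum_{(i,j,k) \in A_1} \sum_{|\alpha^{Re}| = (i,j,k)} \dim(R_n(\alpha)) \leq \left(\tfrac{9}{10}\right)^{2t} \sum_{\alpha} \dim(R_n(\alpha)),
\end{equation*}
where the last sum runs over all $\vec{b}$-partitions $\alpha$. By Theorem \ref{hanlonthm} this last sum equals $\dim \mathbbm{C} S_{M_n} = |S_{M_n}| \leq n! \leq n^n$. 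Thus it remains to show $(9/10)^{2t} n^n \leq n^{-n}$ for large $n$, equivalently
\begin{equation*}
2t \log(10/9) \geq 2n \log n, \quad \text{i.e.,} \quad t \geq \frac{n \log n}{\log(10/9)}.
\end{equation*}

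The second step is to verify this lower bound on $t$ from the explicit formula \eqref{tupperboundrestated}. Since $f(n)/n \to 0$, for $n$ large we have $g(n) \leq f(n) \leq n/10$, so Lemma \ref{deltafg} gives $n + 2\Delta \geq 0.8\, n^2$. Because $\log f(n) \to \infty$, dropping the lower order $\log g(n)$ and the $c$ term yields
\begin{equation*}
t \;\geq\; \frac{(n+2\Delta) \log f(n)}{4 f(n)} \;\geq\; \frac{n^2 \log f(n)}{5\, f(n)}
\end{equation*}
for all sufficiently large $n$. So it suffices to prove
\begin{equation*}
\frac{n \log f(n)}{f(n) \log n} \;\longrightarrow\; \infty \quad \text{as } n \to \infty,
\end{equation*}
since then the ratio is eventually at least any fixed constant (in particular $5/\log(10/9)$).

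The final step is this asymptotic comparison, which I expect to be the only nontrivial part. Write $f(n) = n\, h(n)$ with $h(n) \to 0$ and $n\, h(n) \to \infty$ (these are exactly the hypotheses of Theorem \ref{chitheorem}). Then $\log f(n) = \log n + \log h(n)$, so
\begin{equation*}
\frac{n \log f(n)}{f(n) \log n} = \frac{1}{h(n)} \left( 1 + \frac{\log h(n)}{\log n}\right).
\end{equation*}
Since $h(n) \to 0$ we have $\log h(n) < 0$ eventually, so the term in parentheses lies in $(0,1)$; the question is whether it decays fast enough to cancel $1/h(n) \to \infty$. Observing that $f(n) \to \infty$ forces $\log h(n) \geq \log f(n) - \log n \geq -\log n + \log f(n)$, we get $1 + \log h(n)/\log n \geq \log f(n)/\log n$. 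Hence
\begin{equation*}
\frac{n \log f(n)}{f(n) \log n} \geq \frac{\log f(n)}{h(n) \log n} = \frac{n \log f(n)}{f(n) \log n} \cdot \frac{\log f(n)}{\log n},
\end{equation*}
which is trivially consistent but unhelpful; the clean way is to note that the function $x \mapsto x/\log x$ is increasing for $x > e$, so from $f(n) \leq n$ we obtain $f(n)/\log f(n) \leq n/\log n$, and since $f(n)/n \to 0$ together with $f(n) \to \infty$ this inequality is in fact \emph{strict with the ratio going to zero}: $f(n)/\log f(n) = o(n/\log n)$. Rearranging gives $n \log f(n)/(f(n) \log n) \to \infty$, as required. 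Combining everything yields $Q_1 \leq (9/10)^{2t} n^n \leq n^{-n}$ for sufficiently large $n$.
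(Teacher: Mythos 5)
Your overall strategy is the same as the paper's: on $A_1$ every eigenvalue is bounded by $9/10$, so $Q_1 \leq (9/10)^{2t}\sum_\alpha \dim R_n(\alpha) = (9/10)^{2t}\,|S_{M_n}| \leq (9/10)^{2t}\,n!$, and then one checks that the exponent $2t\log(10/9)$ dominates $n\log n$ under the hypotheses on $f$. Your intermediate estimates ($n + 2\Delta \geq 0.8\,n^2$, reducing to $\frac{n\log f(n)}{f(n)\log n}\to\infty$) are correct.

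The weak spot is the very last step. You want to show
\begin{equation*}
\frac{n\log f(n)}{f(n)\log n}\;\longrightarrow\;\infty,
\end{equation*}
and after two false starts you end by simply \emph{asserting} that $f(n)/\log f(n) = o(n/\log n)$. The monotonicity of $x\mapsto x/\log x$ only gives you the weak inequality $f(n)/\log f(n)\leq n/\log n$; it does not by itself give the little-$o$ estimate, and you don't supply the extra argument. This is precisely the content of the paper's Lemma \ref{boundcompare} (applied with $x_n = f(n)$, $y_n = n$), which is stated and proved earlier and which the paper's own proof of Lemma \ref{klarge} invokes directly. So either cite Lemma \ref{boundcompare}, or actually carry out the estimate: writing $a_n = \log f(n)\to\infty$ and $d_n = \log n - \log f(n)\to\infty$, one has
\begin{equation*}
\frac{f(n)\log n}{n\log f(n)} = \Bigl(1+\tfrac{d_n}{a_n}\Bigr)e^{-d_n}\leq e^{-d_n} + d_n e^{-d_n}/a_n \longrightarrow 0,
\end{equation*}
which is a genuine proof. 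With that repair the argument is correct and coincides with the paper's.
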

\begin{proof}[\bf Proof:]
By Definition \ref{sijk}, for $(i, j, k) \in A_1$, $s(i, j, k) = 9/10$. Thus, 
\begin{align*}
 Q_1 &\leq \sum_{(i, j, k) \in A_1} \left(\frac{9}{10}\right)^{2t} \sum_{\left| \alpha^{Re} \right| = (i, j, k)} \dim (R_n(\alpha)) \\
&\leq \left(\frac{9}{10}\right)^{2t} \sum_{\alpha} \dim (R_n(\alpha))\\
&\leq \left(\frac{9}{10}\right)^{2t} n!
\end{align*} 
since $\sum_{\alpha} \dim (R_n(\alpha)) = \left| S_M(\vec{b})\right| \leq n!$. 
Then, using $t$ as defined as in Equation \eqref{tupperboundrestated} above, 
\begin{align*}
Q_1  &\leq \left( \frac{9}{10} \right)^{n^2(\log f(n) + \log g(n))/2f(n)} 
n!\\
& \leq \left( \frac{9}{10} \right)^{n^2(\log f(n) + \log g(n))/2f(n)} e^{n\log n} \\
&\leq \exp\left(n^2\left(\frac{\log n}{n} - \log (10/9) \frac{\log(f(n))}{2f(n)}\right) \right)
\end{align*}
From Lemma \ref{boundcompare}, for sufficiently large $n$, the above is at most $e^{-n \log n} = n^{-n}$, completing the proof. 
\end{proof}

Before continuing, a simple supporting lemma is needed. 

\begin{lemma}\label{aijkbound}
With notation as above, 

\begin{equation*}
\frac{i!k!}{j!} a(i, j, k) \leq \frac{f(n)^i g(n)^{i-j}(n-f(n))^{k-j}(n-g(n))^k}{(i-j)!(k-j)!j!}
\end{equation*}
\end{lemma}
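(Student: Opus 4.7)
The plan is to apply the elementary binomial bound $\binom{m}{r} \leq \frac{m^r}{r!}$ to each of the four binomial coefficients that make up $a(i,j,k)$, and then multiply out. Since this is purely algebraic, no combinatorial or probabilistic argument is needed.

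First, I would recall from Definition \ref{aijk} that
\begin{equation*}
a(i,j,k) = \binom{f(n)}{i}\binom{g(n)}{i-j}\binom{n-f(n)}{k-j}\binom{n-g(n)}{k}.
\end{equation*}
Applying $\binom{m}{r} \leq \frac{m^r}{r!}$ to each factor gives
\begin{equation*}
a(i,j,k) \leq \frac{f(n)^i}{i!}\cdot\frac{g(n)^{i-j}}{(i-j)!}\cdot\frac{(n-f(n))^{k-j}}{(k-j)!}\cdot\frac{(n-g(n))^k}{k!}.
\end{equation*}

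Next, I would multiply both sides by $\frac{i!k!}{j!}$. The $i!$ in the denominator of the first factor cancels with the $i!$ in the numerator, and likewise the $k!$ in the denominator of the last factor cancels with the $k!$ in the numerator. This leaves exactly
\begin{equation*}
\frac{i!k!}{j!} a(i,j,k) \leq \frac{f(n)^i\, g(n)^{i-j}(n-f(n))^{k-j}(n-g(n))^k}{(i-j)!(k-j)!j!},
\end{equation*}
which is precisely the claimed inequality.

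There is no real obstacle here: the bound $\binom{m}{r} \leq m^r/r!$ is immediate from $\binom{m}{r} = \frac{m(m-1)\cdots(m-r+1)}{r!} \leq \frac{m^r}{r!}$, so the whole argument is one line of bookkeeping. The only thing to be careful about is that each of $i$, $i-j$, $k-j$, $k$ is a nonnegative integer (which is guaranteed by $i \geq j \leq k$, the standing assumption whenever $|\alpha^{Re}| = (i,j,k)$), so all the binomial coefficients are well-defined.
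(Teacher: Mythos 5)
Your proof is correct and uses exactly the same approach as the paper, which simply invokes the bound $\binom{x}{y}\leq \frac{x^y}{y!}$ applied to each factor of $a(i,j,k)$; you have merely spelled out the cancellations explicitly.
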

\begin{proof}[\bf Proof:]
This follows easily from the definition of $a(i, j, k)$ in Definition \ref{aijk} and the fact that ${x \choose y} \leq \frac{x^y}{y!}$ .
\end{proof}

\begin{lemma}\label{iiszero}
For 
\begin{equation*}
A_2 = \left\{ (i, j, k) \neq(0,0,0) \left| \right. k < \frac{n-g(n)}{5}, i = 0 \right\}
\end{equation*}  let 
\begin{equation*}
Q_2 = \sum_{(i, j, k) \in A_2} s(i, j, k)^{2t}\sum_{\left| \alpha^{Re} \right| = (i, j, k)} \dim (R_n(\alpha)). 
\end{equation*}
Then, for sufficiently large $n$, $Q_2 \leq 2(n - g(n))^{-6}$. 
\end{lemma}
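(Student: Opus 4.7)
The plan is as follows. First, note that any triple in $A_2$ satisfies $i=0$, and the constraint $j \leq i$ built into Lemma~\ref{dimensionsum} forces $j=0$ as well. Hence $A_2 = \{(0,0,k) : 1 \leq k < (n-g(n))/5\}$, and the unwieldy triple sum collapses into a single sum indexed by $k$.

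Next, I apply Lemmas~\ref{dimensionsum} and~\ref{aijkbound} with $(i,j,k)=(0,0,k)$ to obtain
\begin{equation*}
\sum_{|\alpha^{Re}|=(0,0,k)} \dim(R_n(\alpha)) \;\leq\; \frac{i!\,k!}{j!}\,a(i,j,k) \;\leq\; \frac{(n-f(n))^k (n-g(n))^k}{k!}.
\end{equation*}
On the eigenvalue side, $s(0,0,k) = 1 - s_3(k)$, and using $1-x \leq e^{-x}$ together with the restriction $k < (n-g(n))/5$, which gives $n-g(n)-k+1 \geq \tfrac{4}{5}(n-g(n))$, I get
\begin{equation*}
s(0,0,k)^{2t} \;\leq\; \exp\!\left(-\frac{4tk(n-g(n)-k+1)}{n+2\Delta}\right) \;\leq\; \exp\!\left(-\frac{4km\,L}{5f(n)}\right),
\end{equation*}
where $m = n-g(n)$ and $L = \log f(n) + \log g(n) + c$, using the identity $4t/(n+2\Delta) = L/f(n)$ from \eqref{tupperboundrestated}.

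Combining the two bounds and writing $\epsilon := n\,m\,\exp(-4mL/(5f(n)))$, the sum factors nicely:
\begin{equation*}
Q_2 \;\leq\; \sum_{k=1}^\infty \frac{1}{k!}\bigl[(n-f(n))(n-g(n))\bigr]^k \exp\!\left(-\frac{4kmL}{5f(n)}\right) \;\leq\; \sum_{k=1}^\infty \frac{\epsilon^k}{k!} \;=\; e^\epsilon - 1.
\end{equation*}
Provided $\epsilon$ is small, $e^\epsilon - 1 \leq 2\epsilon$, so the statement reduces to showing $\epsilon \leq (n-g(n))^{-6}$ for large $n$.

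The main obstacle --- and really the only nontrivial point --- is the comparison of scales that makes $\epsilon$ sufficiently tiny. Here Lemma~\ref{boundcompare} is the key tool: applied with $x_n = f(n)$ and $y_n = n$ (so $x_n/y_n \to 0$ and $x_n \to \infty$ by the hypotheses of Theorem~\ref{chitheorem}), it yields $\log f(n)/f(n) \geq c'\log n/n$ for any constant $c'$ and all sufficiently large $n$. Since $L \geq \log f(n)$ and $m/n \to 1$ (because $g(n)/n \to 0$), this forces $4mL/(5f(n)) \geq (c'/2)\log n$ eventually; taking $c'$ large enough (e.g.\ $c' = 20$) gives $4mL/(5f(n)) \geq 8\log n$, so $\epsilon \leq n^2 \cdot n^{-8} = n^{-6} \leq (n-g(n))^{-6}$. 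Plugging this into $Q_2 \leq 2\epsilon$ finishes the proof.
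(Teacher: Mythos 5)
Your proposal is correct and follows essentially the same path as the paper's proof: reduce to $i = j = 0$, bound the dimension sum via Lemma~\ref{dimensionsum} and the binomial estimate, use $1-x \leq e^{-x}$ and $k < (n-g(n))/5$ on the eigenvalue, invoke Lemma~\ref{boundcompare} to crush the per-$k$ factor below $(n-g(n))^{-6}$, and sum the resulting series. The only cosmetic difference is that you package the base of the geometric-type term as a single quantity $\epsilon$ and compare to $(n-g(n))^{-6}$ at the end (via $y_n = n$ and $m/n \to 1$), whereas the paper compares termwise using $y_n = n - g(n)$ directly; these are interchangeable.
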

\begin{proof} [\bf Proof:]
By Definition \ref{sijk}, 
\begin{align*}
s(i, j, k) &= 1 - s_1(i) + s_2(j) - s_3(k) \\
          &\leq e^{-s_1(i) + s_2(j) - s_3(k)} 
\end{align*}
since $1 - x\leq e^{-x}$ for all $x$. Furthermore, from Lemma \ref{dimensionsum},  
\begin{equation*}
\sum_{\left| \alpha^{Re} \right| = (i, j, k)} \dim (R_n(\alpha))  \leq a(i, j, k) \frac{i!k!}{j!}
\end{equation*}
for $a(i,j, k)$ as in Definition \ref{aijk}. Combining the above,  
\begin{equation}\label{mainupperbound}
Q_2 \leq \sum_{(i, j, k) \in A_2} e^{-2ts_1(i) + 2ts_2(j) - 2ts_3(k)} a(i, j, k) \frac{i!k!}{j!}
\end{equation}
Now, for $(i, j, k) \in A_2$, $i = 0$. Since $i \geq j$, this also means that $j = 0$. Using Definition \ref{sijk} and Lemma \ref{aijkbound},
\begin{align*}
s_1(i) = 0, s_2(j) = 0 \text{ and } \frac{i!k!}{j!} a(i, j, k) \leq \frac{(n-f(n))^k (n -g(n))^k}{k!} 
\end{align*}
Thus, the above inequality simplifies to 
\begin{equation*}
Q_2 \leq \sum_{1\leq k \leq \frac{n-g(n)}{5}} \exp\left(-\frac{4tk(n - g(n) - k + 1)}{n+2\Delta}\right) \frac{(n-f(n))^k (n -g(n))^k}{k!} 
\end{equation*}
and hence, using the fact that $g(n) \leq f(n)$, 
\begin{align*}
Q_2 &\leq \sum_{1\leq k < \frac{n-g(n)}{5}} \exp\left(-\frac{4tk(n - g(n) - k + 1)}{n+2\Delta}\right) \frac{(n-g(n))^{2k}}{k!} \\
&\leq \sum_{1\leq k < \frac{n-g(n)}{5}} \exp\left(-\frac{16tk(n - g(n))}{5(n+2\Delta)}\right) \frac{(n-g(n))^{2k}}{k!} 
\end{align*}
Now, using Lemma \ref{boundcompare} with $x_n = f(n)$ and $y_n = n - g(n)$, for sufficiently large $n$, $\frac{t}{n+2\Delta} \geq \frac{f(n)}{4f(n)} \geq 10 \frac{ \log( n - g(n))}{n - g(n)}$. Thus,  
\begin{equation*}
Q_2 \leq \sum_{1\leq k < \frac{n-g(n)}{5}} \exp\left(- 8k\log(n -g(n))\right)\frac{(n-g(n))^{2k}}{k!} 
\end{equation*}
and hence the above simplifies to
\begin{align*}
Q_2  &\leq \sum_{1\leq k < \frac{n-g(n)}{5}} \frac{(n-g(n))^{-6k}}{k!} \leq 2(n - g(n))^{-6}
\end{align*}
as required. 
\end{proof}

It is easy to see that the lemmas above can be manipulated to provide arbitrarily good bounds. As noted previously, this is because the `limiting' term $(1, 0, 0)$ does not make an appearance here. The case $i \neq 0$ will be considerably more tricky (or at least more tedious.) 

\begin{lemma}\label{iisn'tzero}
For
\begin{equation*}
A_3 = \left\{ (i, j, k) \left| \right. k < \frac{n-g(n)}{5}, i \neq 0 \right\}
\end{equation*} 
let 
\begin{equation*}
Q_3 = \sum_{(i, j, k) \in A_2} s(i, j, k)^{2t}\sum_{\left| \alpha^{Re} \right| = (i, j, k)} \dim (R_n(\alpha))  
\end{equation*}
Then, for sufficiently large $n$, 
\begin{equation*}
Q_3 \leq 12e^{-c} + 4e^{-\frac{cf(n)}{10} + f(n)}
\end{equation*}
\end{lemma}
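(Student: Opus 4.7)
The plan is to combine the eigenvalue upper bound from Lemma \ref{ksmall} with the dimension upper bound from Lemma \ref{dimensionsum} (together with Lemma \ref{aijkbound}) to reduce $Q_3$ to an explicit triple sum, then split the index set $A_3$ by the size of $i$ and estimate each piece separately.

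Applying those three lemmas along with $1-x \leq e^{-x}$ gives
\begin{equation*}
T(i,j,k) := s(i,j,k)^{2t}\!\!\sum_{|\alpha^{Re}|=(i,j,k)}\!\!\dim R_n(\alpha) \leq \frac{f(n)^i g(n)^{i-j}(n-f(n))^{k-j}(n-g(n))^k}{(i-j)!(k-j)!j!}e^{-2ts_1(i)+2ts_2(j)-2ts_3(k)}.
\end{equation*}
Writing $L = \log f(n) + \log g(n)$ and substituting $t$ from \eqref{tupperboundrestated} produces the identity $2t/(n+2\Delta) = (L+c)/(2f(n))$. This yields explicit formulas for $2ts_\ell$, together with $2ts_2(j) \leq j(L+c)$ and (since $k < (n-g(n))/5$) $2ts_3(k) \geq 4k(n-g(n))(L+c)/(5f(n))$. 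I would then split $A_3 = A_3' \cup A_3''$, where $A_3' = A_3 \cap \{i < f(n)/2\}$ and $A_3'' = A_3 \cap \{i \geq f(n)/2\}$.

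For the tail $A_3''$: here $2ts_1(i) = i(L+c)/2$, so $e^{-2ts_1(i)}(f(n)g(n))^i = (f(n)g(n))^{i/2}e^{-ic/2}$. I would bound the $k$-sum geometrically using $e^{-2ts_3(k)} \leq \rho^k$ with $\rho = e^{-4(n-g(n))(L+c)/(5f(n))}$, then collapse the $j$-sum by the binomial identity into $(g(n) + (n-g(n))\rho e^{L+c})^i$. The hypothesis $f(n)/n \to 0$ combined with $L \to \infty$ forces $(n-g(n))L/f(n) \gg \log n$, so $(n-g(n))\rho e^{L+c} \to 0$ and this binomial is at most $(2g(n))^i$. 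The resulting $i$-sum becomes a geometric series whose terms are bounded via Stirling by $(12 e^{-c/2})^i$; summing over $i \geq f(n)/2$ yields the desired tail bound $4 e^{-cf(n)/10 + f(n)}$ when $c > 10$.

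For the main range $A_3'$: the algebraic identity
\begin{equation*}
iL - 2ts_1(i) = -ic + \frac{i(i-1)(L+c)}{f(n)}
\end{equation*}
extracts the anticipated factor $e^{-ic}$ from the $j = k = 0$ slice. The inner $k$-sum over $k \geq 1$ is again small because $(n-g(n))^k e^{-2ts_3(k)}$ decays super-polynomially in $n$, and the inner $j$-sum over $j \geq 1$ has effective ratio $e^c f(n)/(n-f(n)) \to 0$, so each collapses to its leading term up to a $1+o(1)$ factor. The remaining $i$-sum
\begin{equation*}
\sum_{i=1}^{\lfloor f(n)/2\rfloor} \frac{1}{i!}\exp\!\Bigl(-ic + \frac{i(i-1)(L+c)}{f(n)}\Bigr)
\end{equation*}
is handled by observing that the correction $i(i-1)(L+c)/f(n)$ is $o(1)$ for $i$ bounded, which makes the small-$i$ contribution essentially $\sum_{i\geq 1} e^{-ic}/i! \leq 2e^{-c}$ for $c$ large, while for moderate $i$ the weaker lower bound $2ts_1(i) \geq i(L+c)/2$ together with Stirling controls the tail; after absorbing multiplicative slack from the $j$ and $k$ sums this totals to at most $12 e^{-c}$.

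The main obstacle is controlling the $j$-sum: the factor $e^{2ts_2(j)}$ grows essentially like $(f(n)g(n))^j e^{jc}$, while the combinatorial decay only provides $g(n)^{-j}(n-f(n))^{-j}/j!$. What saves the day is the smallness of $f(n)/(n-f(n))$, which renders the effective ratio vanishingly small after the $(f(n)g(n))^j$ is partially cancelled.
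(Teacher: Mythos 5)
Your strategy is essentially the paper's: combine the eigenvalue bound of Lemma \ref{ksmall} (via $1-x \leq e^{-x}$) with the dimension bound of Lemma \ref{dimensionsum} and the estimate of Lemma \ref{aijkbound} to obtain the explicit triple sum, then split the $i$-range at a fixed fraction of $f(n)$, isolate the lead term $(i,j,k) = (1,0,0)$, and control the remainders by showing each one-variable subsum is dominated by its first term. The tactical differences are modest: you split at $i = f(n)/2$ rather than the paper's $f(n)/5$, and for the tail you collapse the $j$-sum by the binomial theorem, $\sum_j \frac{g(n)^{i-j}((n-g(n))\rho e^{L+c})^j}{(i-j)!\,j!} = \frac{(g(n)+(n-g(n))\rho e^{L+c})^i}{i!}$, whereas the paper sums over $k$, then $j$, then $i$, each time bounding a consecutive-term ratio by a constant below $1$. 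Your algebraic identities ($2t/(n+2\Delta) = (L+c)/(2f(n))$, $2ts_1(i) = i(L+c) - i(i-1)(L+c)/f(n)$ for $i < f(n)/2$, and the $s_3$ lower bound on $A_3$) all check out and reproduce the paper's $-ic$ lead exponent in the main range. The place where you should be more careful is the main range $i < f(n)/2$: the claim that the $k \geq 1$ and $j \geq 1$ contributions "collapse to the leading term up to $1+o(1)$" needs to be turned into an actual ratio estimate (the paper gets consecutive ratios like $(n-g(n))^{-10}$ and $(n-g(n))f(n)/(n-f(n))^4$, using Lemma \ref{boundcompare}), and the effective ratio you quote, $e^c f(n)/(n-f(n))$, is not quite the right quantity once the $k$-sum has been absorbed. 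Also, you state the tail bound "when $c > 10$"; the paper's proof of the lemma yields the stated inequality for every $c > 0$ and sufficiently large $n$, and if you want to match the lemma as written (not just the regime the theorem ultimately uses) you should avoid introducing lower bounds on $c$ at this stage.
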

\begin{proof}[\bf Proof:]
Manipulating as in Lemma \ref{iiszero} gives an equation analogous to Equation \eqref{mainupperbound} above: 
\begin{equation*}
Q_3 \leq \sum_{(i, j, k) \in A_3} e^{-2ts_1(i) + 2ts_2(j) - 2ts_3(k)} a(i, j, k) \frac{i!k!}{j!}
\end{equation*}
Do the above sum over $k$, then over $j$, then finally over $i$. Using Lemma \ref{aijkbound}, 
\begin{equation}\label{firstupperbound}
\begin{split}
Q_3 &\leq \sum_{(i, j,k) \in A_3} e^{-2ts_1(i) + 2ts_2(j) - 2ts_3(k)}\frac{f(n)^i g(n)^{i-j}(n-f(n))^{k-j}(n-g(n))^k}{(i-j)!(k-j)!j!}  \\
    & \leq \sum_{(i, j)} e^{-2ts_1(i) + 2ts_2(j)} \frac{f(n)^i g(n)^{i-j}}{(i-j)!j!} \\
&\hspace{0.8 in} \cdot  \sum_{k:(i, j, k) \in A_3} e^{-2ts_3(k)}\frac{(n-f(n))^{k-j}(n-g(n))^k}{(k-j)!}
\end{split}
\end{equation}
Fix $i$ and $j$ and do the sum over $k$:

\paragraph{Summing over $k$:} If $(i, j, k) \in A_3$, then by definition $k \leq (n-g(n))/5$. Also, $k \geq j$, giving
\begin{equation*}
\sum_{k  = j}^{(n-g(n))/5} e^{-2ts_3(k)}\frac{(n-f(n))^{k-j}(n-g(n))^k}{(k-j)!}
\end{equation*}
Denote the $k$th term of the above sum by $r_k$. The idea will be to show that $r_{k+1}/r_k$ is less than $1/2$, and thus to bound the sum by $2r_j$. Explicitly, 
\begin{align*}
\frac{r_{k+1}}{r_k} &= e^{-2ts_3(k+1) +2ts_3(k)}\frac{(n-f(n))(n-g(n))}{k-j+1}\\
                    &\leq \exp\left(\frac{4t(2k - n+g(n))}{n+2\Delta}\right)
									 			\frac{(n-g(n))^2}{k-j+1} \\
                    &\leq \exp\left( \frac{-12t(n - g(n))}{5(n+2\Delta)} \right)(n - g(n))^2
\end{align*}
and again using Lemma \ref{boundcompare} with $x_n = f(n)$ and $y_n = n - g(n)$, for sufficiently large $n$, $\frac{t}{n+2\Delta} \geq \frac{f(n)}{4f(n)} \geq 5 \frac{ \log( n - g(n))}{n - g(n)}$, and thus
\begin{align*}
\frac{r_{k+1}}{r_k} &\leq \exp\left( -12\log(n - g(n)) \right)(n - g(n))^2 \\
 &= (n-g(n))^{-10} \leq \frac{1}{2}
\end{align*}
for sufficiently large $n$. Therefore,  
\begin{align*}
\sum_{ k = j}^{(n-g(n))/5} e^{-2ts_3(k)}\frac{(n-f(n))^{k-j} (n-g(n))^k}{(k-j)!} &= \sum_{ k = j}^{(n-g(n))/5} r_k \leq 2 r_j \\
 &= 2e^{-2ts_3(j)} (n-g(n))^j
\end{align*}
Plugging this back into Equation \eqref{firstupperbound},  
\begin{align}\label{secondupperbound}
Q_3 &\leq \sum_{(i, j)} e^{-2ts_1(i) + 2ts_2(j)} \frac{f(n)^i g(n)^{i-j}}{(i-j)!j!} \left(2e^{-2ts_3(j)} (n-g(n))^j\right) \nonumber \\
    &\leq 2\sum_{i=1}^{f(n)} e^{-2ts_1(i)}f(n)^i \sum_{j = 0}^ie^{2ts_2(j)- 2ts_3(j)} \frac{g(n)^{i-j}(n-g(n))^j}{(i-j)!j!}
\end{align}
again using the fact that if $\left| \alpha^{Re} \right| = (i, j, k)$, then $i \geq j$, and also $i \leq f(n)$. Now fix $i$, and do the sum over $j$. 

\paragraph{Summing over $j$:} The sum to be bounded is
\begin{equation*}
\sum_{j = 0}^ie^{2ts_2(j)- 2ts_3(j)} \frac{g(n)^{i-j}(n-g(n))^j}{(i-j)!j!}
\end{equation*}
From Definition \ref{sijk}, 
\begin{equation*}
s_2(j) - s_3(j) = - \frac{2j(n - f(n))}{n+2\Delta}
\end{equation*}
Hence the sum simplifies to
\begin{align*}
\sum_{j =0}^i \exp \left( -\frac{4tj (n - f(n))}{n + 2\Delta}\right)  \frac{g(n)^{i-j}(n-g(n))^j}{(i-j)!j!}
\end{align*}
By a slight abuse of notation, let $r_j$ again be the $j$th summand of the above sum. As above, bound the ratio between $r_{j+1}$ and $r_j$ to bound the sum. Here, 
\begin{align*}
\frac{r_{j+1}}{r_j} &= \exp\left(- \frac{4t(n-f(n))}{n+2\Delta} \right) \frac{(n-g(n))(i-j)}{g(n)(j+1)} \\
 &\leq\exp\left(- \frac{4t(n-f(n))}{n+2\Delta} \right) (n - g(n))f(n)
\end{align*}
Now, using Lemma \ref{boundcompare} with $x_n = f(n)$ and $y_n = n - f(n)$, for sufficiently large $n$, $\frac{t}{n+2\Delta} \geq \frac{\log f(n)}{4f(n)} \geq \frac{\log (n - f(n))}{n - f(n)}$. Thus,  
\begin{align*}
\frac{r_{j+1}}{r_j} &= \exp\left(- 4\log(n - f(n)) \right) (n - g(n))f(n) \\
 &\leq \frac{(n - g(n))f(n)}{(n - f(n))^4} < \frac{1}{2}
\end{align*}
for sufficiently large $n$, and as before, 
\begin{align*}
\sum_{j = 0}^ie^{2ts_2(j)- 2ts_3(j)} \frac{g(n)^{i-j}(n-g(n))^j}{(i-j)!j!} =  \sum_{j = 0}^i r_j \leq 2r_0 = \frac{2g(n)^i}{i!}
\end{align*}
Plugging this back into Equation \eqref{secondupperbound},  
\begin{equation}\label{thirdupperbound}
Q_3 \leq 4\sum_{i=1}^{f(n)} e^{-2ts_1(i)}\frac{f(n)^ig(n)^i}{i!}. 
\end{equation}

\paragraph{Summing over $i$:} Break up the above sum into two pieces: $ i \leq \frac{f(n)}{5}$ and $i > \frac{f(n)}{5}$. Bound the first case first. Consider the summation 
\begin{equation*}
\sum_{i=1}^{f(n)/5} e^{-2ts_1(i)}\frac{f(n)^ig(n)^i}{i!}
\end{equation*}
Recall that  
\begin{equation}\label{sijkrestated}
s_1(i) = \begin{cases}
   \frac{2i(f(n) - i+1)}{n+2\Delta} & i < \frac{f(n)}{2}\\
   \frac{if(n)}{n+2\Delta} & i\geq \frac{f(n)}{2}
\end{cases}
\end{equation}
Thus, for $i < \frac{f(n)}{5}$, 
\begin{align*}
2ts_1(i) &= 2\left(\frac{(n+2\Delta)(\log f(n) + \log g(n))}{4f(n)} + c \frac{n+2\Delta}{4f(n)}\right)\frac{2i(f(n) - i+1)}{n+2\Delta} \\
         & = (\log f(n) + \log g(n) + c) \left( i - \frac{i^2 - i}{f(n)}\right)
\end{align*}
Now, $i - \frac{i^2 - i}{f(n)}$ is a quadratic function in $i$ which corresponds to an upside down parabola, and as such is minimized at the endpoints of an interval. Plugging in $i = 1$ and $i = \frac{f(n)}{5}$,  for $1 \leq i \leq \frac{f(n)}{5}$, $i - \frac{i^2 - i}{f(n)} \geq 1$. This gives that
\begin{equation*}
2t s_1(i) \geq (\log f(n)+ \log g(n)) \left( i - \frac{i^2 - i}{f(n)}\right) + c
\end{equation*}
Thus, 
\begin{align*}
e^{-2ts_1(i)} &\leq \exp \left(-(\log f(n)+ \log g(n)) \left( i - \frac{i^2 - i}{f(n)}\right) - c \right)\\
 &= f(n)^{-i} g(n)^{-i} f(n)^{\frac{i^2 - i}{f(n)}} g(n)^{\frac{i^2 - i}{f(n)}} e^{-c}
\end{align*}
Hence, 
\begin{equation*}
\sum_{i=1}^{f(n)/5} e^{-2ts_1(i)}\frac{f(n)^ig(n)^i}{i!} \leq e^{-c} \sum_{i=1}^{f(n)/5} f(n)^{\frac{i^2 - i}{f(n)}} g(n)^{\frac{i^2 - i}{f(n)}}\frac{1}{i!}
\end{equation*}
By yet another slight abuse of notation, let $r_i$ be the $i$th summand of the above right-hand sum -- bound the ratio between consecutive terms to find bounds on the sum. Then, since $g(n) \leq f(n)$, 
\begin{align*}
\frac{r_{i+1}}{r_i} &= \frac{1}{i+1} f(n)^{\frac{2i}{f(n)}}g(n)^{\frac{2i}{f(n)}} \leq \frac{1}{i+1} f(n)^{\frac{4i}{f(n)}}
\end{align*}

Now, consider the above expression on the right-hand side as a function of $i$. By differentiating, it is easy to check that it is increasing for $i+1 > \frac{f(n)}{4 \log f(n)}$ and decreasing for $i+1 < \frac{f(n)}{4 \log f(n)}$. Thus, in order to find its maximum, just check the endpoints. Plugging $i = 1$, the result is $\frac{1}{2} f(n)^{4/f(n)}$. Since $f(n) \rightarrow \infty$, this approaches $1/2$ as $n \rightarrow \infty$. Thus, for sufficiently large $n$, this is at most $2/3$. Plugging in $i = f(n)/5$, 
\begin{equation*}
\frac{1}{f(n)/5+1} f(n)^{\frac{4f(n)/5}{f(n)}} \leq 5f(n)^{-1/5}
\end{equation*}
which clearly goes to $0$, and hence is less than $2/3$ for sufficiently large $n$. Therefore, $r_{i+1}/r_i$ is less than $2/3$ for sufficiently large $n$, and hence 
\begin{equation*}\
\sum_{i=1}^{f(n)/5} f(n)^{\frac{i^2 - i}{f(n)}} g(n)^{\frac{i^2 - i}{f(n)}}\frac{1}{i!} = \sum_{i=1}^{f(n)/5} r_i \leq 3r_1 = 3
\end{equation*}
and therefore
\begin{equation}\label{ismall}
\sum_{i=1}^{f(n)/5} e^{-2ts_1(i)}\frac{f(n)^ig(n)^i}{i!} \leq 3e^{-c}
\end{equation}
It remains to bound the sum for $i\geq f(n)/5$. This is 
\begin{equation*}
\sum_{i=f(n)/5+1}^{f(n)} e^{-2ts_1(i)}\frac{f(n)^ig(n)^i}{i!} 
\end{equation*}
It is easy to check from the restated definition in Equation \eqref{sijkrestated}  that for $i > \frac{f(n)}{5}$, 
\begin{equation*}
s_1(i) \geq \frac{if(n)}{n+2\Delta}
\end{equation*}
Thus, 
\begin{align*}
e^{-2ts_1(i)}& \leq \exp\left(-2\left(\frac{(n+2\Delta)(\log f(n) + \log g(n))}{4f(n)} + c \frac{(n+2\Delta)}{4f(n)}\right) \frac{if(n)}{n+2\Delta} \right)\\
&=\exp \left( - \frac{i(\log f(n) + \log g(n)+c)}{2}\right)\\
              &= e^{-\frac{ic}{2}} f(n)^{-\frac{i}{2}} g(n)^{-\frac{i}{2}}
\end{align*}
Therefore, since $g(n)\leq f(n)$,
\begin{align*}
\sum_{i=f(n)/5+1}^{f(n)} e^{-2ts_1(i)}\frac{f(n)^ig(n)^i}{i!}  &\leq \sum_{i = f(n)/5+1}^{f(n)} e^{-\frac{ic}{2}} \frac{f(n)^{\frac{i}{2}} g(n)^{\frac{i}{2}}}{i!} \\
&\leq e^{-\frac{cf(n)}{10}} \sum_{i=f(n)/5+1}^{f(n)} \frac{f(n)^i}{i!} \\
&\leq e^{-\frac{cf(n)}{10} + f(n)} 
\end{align*}
Combining the above with Equation \eqref{ismall}, 
\begin{equation*}
\sum_{i = 1}^{f(n)} e^{-2ms_1(i)}\frac{f(n)^i g(n)^i}{i!} \leq 3e^{-c} + e^{-\frac{cf(n)}{10} + f(n)} 
\end{equation*}
and thus from Equation \eqref{thirdupperbound}, 
\begin{equation*}
Q_3 \leq 12e^{-c} + 4e^{-\frac{cf(n)}{10} + f(n)} 
\end{equation*}
as required. 
 \end{proof}

\begin{proof}[\bf Proof of Upper Bound in Theorem \ref{chitheorem}]
Combining Equation \eqref{sumtobound} with Lemmas \ref{klarge}, \ref{iiszero} and \ref{iisn'tzero}, 
\begin{align*}
\sum_{1 \neq \Lambda_1(\alpha) \geq 0} \dim(R_n(\alpha)) \Lambda_1(\alpha)^{2t}   
 &\leq \sum_{(i, j, k) \neq (0, 0, 0)} s(i, j, k)^{2t}\sum_{\left| \alpha^{Re} \right| = (i, j, k)} \dim (R_n(\alpha)) \\
 &\leq  n^{-n} +  2(n - g(n))^{-6} + 12e^{-c} + 4e^{-\frac{cf(n)}{10} + f(n)} 
\end{align*}
Thus, for $c > 10$ and $n$ sufficiently large,  
\begin{align*}
\sum_{1 \neq \Lambda_1(\alpha) \geq 0} \dim(R_n(\alpha)) \Lambda_1(\alpha)^{2t}   
 &\leq   16e^{-c}
\end{align*}
Now, from Equation \eqref{alleigsequation}, 
\begin{align*}
 \left\| P^t(x, \cdot) - \pi \right\|_{2, \pi} = \sqrt{\sum_{\alpha} \dim(R_n(\alpha)) \Lambda_1(\alpha)^{2t}}
\end{align*}
and thus, for $c > 10$ and for $n$ sufficiently large, 
\begin{equation*}
 \left\| P^t(x, \cdot) - \pi \right\|_{2, \pi} \leq 4e^{-\frac{c}{2}}
\end{equation*}
as required. 
\end{proof}
\section{Open Questions} \label{sectionquestionstoponder}

As noted in the introduction, there are many questions raised by this paper which could be profitably explored. The most approachable one concerns the conditions in Theorem \ref{chitheorem}, which assumes that 
\begin{equation*}
\lim_{n\rightarrow \infty}\frac{f(n)}{n} = 0 \textnormal{ and } \lim_{n\rightarrow \infty} f(n) = \infty
\end{equation*}
It is not difficult to show that the second condition about $f(n)$ approaching infinity is necessary (this fact is first noted in Remark \ref{whyfgoestoinfinity}). Indeed, as Section \ref{sectheuristics} demonstrates, the first $f(n)$ rows are the limiting factors behind the mixing time. Furthermore, the highest eigenvalue calculations in Section \ref{sectchisquaredlower} also show that $f(n)$ drives the highest eigenvalue. For cutoff to occur, the driving force behind mixing needs to grow without bound, explaining why the condition is necessary. 

However, the assumption that $\frac{f(n)}{n}$ approaches $0$ does not have a similarly natural justification. This assumption was largely made to make the calculations more tractable. Indeed, tracing back through the proof shows that an identical argument could be made using only the assumption that 
\begin{equation*}
\limsup_{n \rightarrow \infty} \frac{f(n)}{n} \leq c
\end{equation*}
where $c$ is some specific constant. While I haven't calculate precisely what it would need to be, something on the order of $0.1$ would suffice. 

Conversely, it's clear from the heuristics and the eigenvalue calculations that in order for the logic in this paper to be valid, $f(n)$ needs to be no bigger than $n - g(n)$. Otherwise, the highest eigenvalue is the one associated to $n - g(n)$, and the mixing is driven by the last $n - g(n)$ columns instead of the first $f(n)$ rows. Thus, the condition that in the limit, 
\begin{equation*}
f(n) + g(n) \leq n 
\end{equation*} 
is necessary. This leads to the first question:

\begin{quest}\label{conditionstheoremchi}
How much can the conditions in Theorem \ref{chitheorem} be relaxed? If the assumption is that 
\begin{equation*}
\limsup_{n \rightarrow \infty} \frac{f(n)}{n} \leq c
\end{equation*}
what value of $c$ would allow a virtually identical proof to go through? What are the strongest conditions on $f(n)$ (or possibly $f(n)$ and $g(n)$ both) that still allow for the same chi-squared cutoff?
\end{quest}

Continuing to pose questions, part of the focus of this paper is on the discrepancy between chi-squared and total variation mixing for two-step restriction matrices. Indeed, Theorem \ref{TVlowerbound} states that as long as $f(n)$ and $g(n)$ are commeasurable in the limit, then the two mixing times coincide (and both exhibit cutoff); whereas Theorem \ref{TVfastmixingexample} states that for a wide class of $f(n)$, as long as $g(n) = 1$, total variation mixing occurs substantially earlier and without cutoff. The same theorem would also go through without major adjustments under the assumptions that $g(n)\leq c$ for any constant $c$; however, this is unlikely to be the only case where something similar happens. This leads to a number of natural and interesting questions:

\begin{quest}
What conditions on $f(n)$ and $g(n)$ are necessary so that total variation mixing occurs substantially before chi-squared mixing? Are those precisely the cases when the total variation mixing time doesn't undergo cutoff? If those are distinct phenomena, under which conditions on $f(n)$ and $g(n)$ does the random walk undergo total variation cutoff?
\end{quest}

A considerably more ambitious avenue of research involves studying a broader class of one-sided restriction matrices. In this paper, attention was focused on two-step restriction matrices because they induce a vertex transitive random transposition walk. However, I would conjecture that very few other restriction matrices have this property; see Remark \ref{examplenonvertextransitive} for a demonstration of what can happen. (For an example of a one-sided restriction matrix which isn't in this class but does induce a vertex transitive walk, see Example 3 in Hanlon \cite{HanlonPaper}.) This assumption made possible the use of Corollary \ref{alleigsvertextransitive}, which requires only eigenvalue and not eigenvector information to achieve the requisite chi-squared bounds. 

However, Hanlon does derive both eigenvalue and eigenvector information in his paper. While the calculations would be more difficult, it would certainly be feasible to apply Theorem \ref{alleigs} in the case of a general one-sided restriction matrix. This means that it would be possible to tackle questions similar to the ones in this paper for many more types of one-sided restriction matrices. Some of the questions which arise naturally are the following:

\begin{quest}
Is there a wider class of one-sided restriction matrices for which the random transposition walk can be proven to undergo chi-squared cutoff, and where does this cutoff occur? What kind of conditions on the restriction matrix ensure that the chi-squared and total variation mixing times match? Is there a wide class of examples for which the total variation time is an order smaller than the chi-squared mixing time? 
\end{quest} 

Although this brief summary is a good start, many analogous questions on the random transposition walk on one-sided restriction matrices can be posed.

    \bibliographystyle{plain}

    \bibliography{mybib}
\end{document}